\documentclass{amsart}
\usepackage[utf8]{inputenc}
\usepackage{amsmath, amsthm, amssymb, amsfonts, mathtools, bbold, stmaryrd, latexsym, enumerate}    
\usepackage{wasysym}
\usepackage{dsfont}

\pdfoutput=1

\usepackage[dvipsnames]{xcolor}
\usepackage{stackengine}
\usepackage{float}
\usepackage{enumitem}
\usepackage{tikz}
\usepackage{comment}

\usepackage[colorlinks]{hyperref}
\hypersetup{
    colorlinks={true},
    linkcolor={BrickRed},
    citecolor={ForestGreen},
    urlcolor={Blue}
}
\usepackage[nameinlink]{cleveref}

\usepackage{nicematrix}
\NiceMatrixOptions
  {
    code-for-first-col = \scriptstyle ,
    code-for-first-row = \scriptstyle 
  }

\usepackage[parfill]{parskip}
\usepackage[margin=1in]{geometry}

 \newtheorem{theorem}{Theorem}[section]
 \newtheorem{corollary}[theorem]{Corollary}
 \newtheorem{lemma}[theorem]{Lemma}
 \newtheorem{proposition}[theorem]{Proposition}
  \newtheorem{claim}{Claim}
 \newtheorem{conjecture}{Conjecture}
 \newtheorem{question}{Question}

 \theoremstyle{definition}
 \newtheorem{definition}[theorem]{Definition}
 \theoremstyle{remark}
 \newtheorem{remark}[theorem]{Remark}
 \newtheorem{example}[theorem]{Example}
 \newtheorem*{notation}{\it Notation}

\numberwithin{equation}{section}

\newcommand{\Hrk}{\mathrm{Hrk}}
\newcommand{\rk}{\operatorname{rk}}

\newcommand{\Sym}{\mathrm{Sym}}

\newcommand{\Trop}{\operatorname{Trop}}
\newcommand{\Newt}{\operatorname{Newt}}

\newcommand{\diag}{\operatorname{diag}}


\newcommand{\calN}{\mathcal{N}}

\newcommand{\calX}{\mathcal{X}}

\newcommand{\bbC}{\mathbb{C}}

\newcommand{\bbP}{\mathbb{P}}
\newcommand{\bbR}{\mathbb{R}}
\newcommand{\bbT}{\mathbb{T}}

\newcommand{\bbZ}{\mathbb{Z}}

\newcommand{\bfn}{\mathbf{n}}
\newcommand{\bfd}{\mathbf{d}}
\newcommand{\bfr}{\mathbf{r}}

\newcommand{\bfx}{\mathbf{x}}
\newcommand{\bfy}{\mathbf{y}}
\newcommand{\bfY}{\mathbf{Y}}
\newcommand{\bfv}{\mathbf{v}}
\newcommand{\bfi}{\mathbf{i}}
\newcommand{\bfj}{\mathbf{j}}
\newcommand{\bfw}{\mathbf{w}}
\newcommand{\bfu}{\mathbf{u}}
\newcommand{\bfe}{\mathbf{e}}
\newcommand{\bfz}{\mathbf{z}}


\title{Hadamard ranks of algebraic varieties}

\author[D. Antolini]{Dario Antolini}
\author[G. Mont\'ufar]{Guido Mont\'ufar}
\author[A. Oneto]{Alessandro Oneto}

\address{Dario Antolini, Alessandro Oneto - Department of Mathematics, University of Trento, Via Sommarive, 14 - 38123 Povo (Trento), Italy}
\email{dario.antolini-1@unitn.it, alessandro.oneto@unitn.it}

\address{Guido Mont\'ufar - Departments of Mathematics and Statistics \& Data Science, University of California, Los Angeles Los Angeles, CA 90095}
\email{montufar@math.ucla.edu}

\keywords{Hadamard products of algebraic varieties, Tensor decomposition, Secant varieties, Toric varieties}
\subjclass{14M99, 14N07, 14M25 (Primary) 62E10 (Secondary)}
	
\begin{document}

\begin{abstract}
    Motivated by the study of decompositions of tensors as Hadamard products (i.e., coefficient-wise products) of low-rank tensors, we introduce the notion of {\it Hadamard rank} of a given point with respect to a projective variety: if it exists, it is the smallest number of points in the variety such that the given point is equal to their Hadamard product. 
    We prove that if the variety $X$ is not contained in a coordinate hyperplane or a binomial hypersurface, then the generic point has a finite $X$-Hadamard-rank. Although the Hadamard rank might not be well defined for special points, we prove that the general Hadamard rank with respect to secant varieties of toric varieties is finite and the maximum Hadamard rank for points with no coordinates equal to zero is at most twice the generic rank. In particular, we focus on Hadamard ranks with respect to secant varieties of toric varieties since they provide a geometric framework in which Hadamard decompositions of tensors can be interpreted. Finally, we give a lower bound to the dimension of Hadamard products of secant varieties of toric varieties: this allows us to deduce the general Hadamard rank with respect to secant varieties of several Segre-Veronese varieties. 
\end{abstract} 

\maketitle

\section{Introduction}
Restricted Boltzmann Machines are a particular type of probabilistic graphical models arising in machine learning as building blocks of deep neural networks. They can be regarded as models for \textit{tensor decompositions}, where tensors are written as \textit{Hadamard products of tensor-rank decompositions}, i.e., as entry-wise products of additive decompositions of decomposable tensors. Motivated by these structures, we investigate broader types of Hadamard product decompositions of tensors. More generally, we introduce the notion of Hadamard rank with respect to a variety and study its properties. 

In \cite{CMS10:GeometryRBM}, the authors began the study of Restricted Boltzmann Machines from the perspective of algebraic statistics and tropical geometry. 
This approach was followed by \cite{CTY10:Implicitization,montufar2015discrete,MM17:DimensionKronecker,SM18:MixturesTwoModels}, investigating, in particular, the expressive power and approximation errors of these models. An overview can be found in \cite{Mon16}. 
The size of the tensor, the number of Hadamard factors, and the length of the tensor-rank decompositions in each factor are determined by the \textit{architecture} of the Restricted Boltzmann Machine, namely the number of observable and hidden units and their number of states. 
This point of view was considered in \cite{Mon16,FOW:MinkowskiHadamard,SM18:MixturesTwoModels,oneto2023hadamard}. 
Such decompositions have been called \textit{Hadamard-Hitchcock decompositions} ($\bfr$-HHD) in \cite{oneto2023hadamard}, where $\bfr = (r_1,\ldots,r_m)$ are the lengths of the tensor-rank decompositions in each Hadamard factor. They have also been considered in the context of data analysis in \cite{ciaperoni2024hadamard}. 

In parallel to the aforementioned literature, the purely geometric notion of a \textit{Hadamard product of projective varieties} has been considered, see, e.g., \cite{BCK17,bocci2022hadamard,antolini2025algebraic,maazouz2024spinor} or the book \cite{bocci2024hadamard}. In a few words, the Hadamard product of two points in projective space is obtained from their coefficient-wise multiplication in a fixed choice of coordinates (see \Cref{eq:hadamard_product_map}). 
It is clear from the definition that this operation is not well-defined everywhere on projective spaces and that it depends on the choice of coordinates. 
The Hadamard product of two projective varieties is the Zariski closure of the set of Hadamard products of all pairs of points in the Cartesian product of the two varieties (see \Cref{def:Hadarmard-product}). 

In the classical algebraic geometry literature on additive decompositions of tensors, the notion of tensor rank has been put in the more general framework of \textit{secant varieties} and \textit{$X$-ranks}. 
Given a projective variety $X\subset \bbP^N$, the \textit{$X$-rank} of a point $p\in \bbP^N$ is the smallest number of points $x_1,\ldots,x_r\in X$ such that $p$ lies on their linear span, $p\in \langle x_1,\ldots,x_r \rangle$. Geometrically, this is related to the notion of \textit{$r$-secant variety}, i.e., the Zariski closure of the set of points of $X$-rank at most $r$. Recall that tensor rank decompositions correspond to the notion of $X$-rank with $X$ being a Segre variety (Veronese or Segre-Veronese varieties if we are interested in symmetric or partially-symmetric tensors, respectively). 
For a general overview on these definitions, see, e.g., \cite[Section 5.2.1]{Lan12:Book}. 

In this paper, we introduce a multiplicative version of the notion of $X$-rank with the goal of formalizing a geometric framework for HHDs. We define the notion of \textit{Hadamard-$X$-rank} with respect to any projective variety $X\subset \bbP^N$. Given a projective variety $X\subset \bbP^N$, the \textit{Hadamard-$X$-rank} of a point $p\in \bbP^N$ is the smallest number of points $x_1,\ldots,x_r\in X$ such that $p$ is equal to their Hadamard product (\Cref{def:hadamard-x-rank}). For example, Hadamard products of tensor rank decompositions are related to the notion of Hadamard rank with respect to Segre-Veronese varieties and to the study of Hadamard products of their secant varieties. 

Most of the paper will be focused on Hadamard ranks with respect to \textit{secant varieties} of \textit{toric varieties}. These include Segre-Veronese varieties. Also, since toric varieties are strictly related to statistical models known as \textit{exponential families}, our results on toric varieties extend some of the results of \cite{MM17:DimensionKronecker}.

\subsection*{Main results and structure of the paper}
In \Cref{sec:Hadamard_ranks}, we offer a brief overview on $X$-ranks and secant varieties and we define the notion of $X$-Hadamard-rank with respect to any algebraic variety $X\subset \bbP^N$.

In \Cref{sec:finiteness_HadamardRanks}, we study which conditions on the variety $X\subset \bbP^N$ and a point $p\in \bbP^N$ guarantee that the $X$-Hadamard-rank of $p$ is well-defined or, at least, that the general $X$-Hadamard-rank is finite. For example, in \Cref{cor:finitness_partiallysymmetric_Hadamard_ranks}, as a consequence of a more general result for toric varieties, we deduce that the Hadamard rank with respect to the $r$th secant variety ($r \geq 2$) of a Segre-Veronese variety is finite for any point in the ambient space. This can be rephrased by saying that any partially symmetric tensor admits a partially symmetric $\bfr$-Hadamard-Hitchcock decomposition for any $\bfr = (r_1,\ldots,r_m)$ with $r_i \geq 2$. We also show that the Hadamard rank of points with all coordinates different from zero is at most twice the Hadamard rank of a general point (see \Cref{prop:maximum_rank_atmost_twice_generic}). 
By means of tropical geometry, we characterize varieties $X$ for which the Hadamard-$X$-rank of a general point is finite: these are \textit{concise} varieties (i.e., not contained in a coordinate hyperplane) which are not contained in a proper binomial hypersurface or, equivalently, varieties whose ideal contains no variables and no binomials (see \Cref{cor:finite_HadamardRank_condition}). 

Finally, in \Cref{sec:dimension_Hadamard_powers}, we give a lower bound on the dimension of Hadamard products of secant varieties of a toric variety in terms of dimensions of its higher secant varieties, see \Cref{lemma:upperbound_dimensions}. In \Cref{cor:lowerbound_expfamily}, we rephrase this result in terms of exponential families thanks to their strong relations to toric varieties. This allows us to deduce the actual dimension, and then the generic $X$-Hadamard-rank, for Segre-Veronese varieties. \Cref{lemma:upperbound_dimensions} and \Cref{cor:lowerbound_expfamily} generalize previous results from \cite{MM17:DimensionKronecker}.

We believe that the present paper sets the ground for future investigations about Hadamard products of secant varieties, Hadamard ranks and Hadamard decompositions with respect to interesting families of varieties. 
We list some of them in \Cref{sec:future}. 

\subsection*{Acknowledgments}
AO acknowledges partial financial support from A.\ v.\ Humboldt Foundation/Stiftung through a fellowship for postdoctoral researchers spent at University of Magdeburg (Germany) on early stages of the collaboration with GM. 
DA and AO acknowledge the TensorDec Laboratory of the Department of Mathematics of the University of Trento, of which they are currently members, for helpful discussions. AO acknowledges that the work has been partially funded by the Italian Ministry of University and Research in the framework of the Call for Proposals for scrolling of final rankings of the PRIN 2022 call - Protocol no.\ 2022NBN7TL (\textit{``Applied Algebraic Geometry of Tensors''}). DA and AO are members of GNSAGA (INdAM). 
GM acknowledges support from the following sources: DARPA, under the AIQ project HR00112520014:P00002; 
the National Science Foundation (NSF) through awards CCF-2212520 and DMS-2145630; the German Research Foundation (DFG) through the SPP 2298 (FoDL) project 464109215; and the Federal Ministry of Research, Technology and Space (BMFTR) through the DAAD project 57616814 (SECAI). 
The authors thank Bernd Sturmfels for inspiring discussions at several stages of this work.

\section{Hadamard ranks of algebraic varieties}\label{sec:Hadamard_ranks}
\begin{notation}
    If $V$ is an $(n+1)$-dimensional $\bbC$-vector space, we denote by $\bbP V$ its projectivization. Due to the nature of Hadamard products, we will consider projective varieties with respect to a fixed choice of coordinates. In this case, we will assume a fixed basis on $V$. We will simply write $\bbP^n = \bbP V$, denoting by $(v_0:\cdots:v_n)$ the projective coordinates of the vector with coordinates $(v_0,\ldots,v_n)$ in the fixed basis.
\end{notation}

\subsection{Additive decompositions and secant varieties of algebraic varieties}
\label{ssec:secants}
In 1927, Hitchcock introduced the additive decomposition of a tensor $T\in\bbC^{n_1+1}\otimes\cdots\otimes\bbC^{n_k+1}$ as a sum of decomposable tensors, i.e., $T=\sum_{i=1}^r 
\bfv_{i,1}\otimes\cdots\otimes \bfv_{i,k}$, where $\bfv_{i,j}\in \bbC^{n_j+1}$, see \cite{hitchcock1927expression}. The smallest $r$ for which such a decomposition exists is called the \textit{tensor rank} of $T$. Even earlier, in the 1850s, Sylvester studied additive decompositions of a homogeneous polynomial $F\in {\rm Sym}^d\bbC^{n+1}$ as a sum of powers of linear forms, i.e., a decomposition as $F=\sum_{i=1}^r L_i^d$, where $L_i\in {\rm Sym}^1\bbC^{n+1}$, see \cite{sylvester1851lx}. Since homogeneous polynomials can be identified with symmetric tensors, the second decomposition is a symmetric version of the first one. The smallest $r$ for which such a decomposition exists is called the \textit{symmetric tensor rank}, or \textit{Waring rank}, of $F$. 
More generally, given a partially symmetric tensor $T\in {\rm Sym}^{d_1}\bbC^{n_1+1}\otimes\cdots\otimes{\rm Sym}^{d_k}\bbC^{n_k+1}$, we call the smallest number $r$ such that $T=\sum_{i=1}^r \bfv_{i,1}^{\otimes d_1}\otimes\cdots\otimes \bfv_{i,k}^{\otimes d_k}$ 
the \textit{partially symmetric tensor rank} of $T$.

The general geometric framework in which studying tensor decompositions and many other additive decompositions is the one of \textit{secant varieties} of projective varieties. 

\begin{definition}
\label{def:secant-variety}
    Let $X\subset \bbP^N$ be a projective variety, and let $p\in\bbP^N$. 
    The \textbf{$X$-rank} of $p$ is the smallest number of points of $X$ whose linear span contains the point $p$, i.e., 
    \[
        \rk_X(p):=\min\{r\in \bbZ_{\geq 1} ~:~ p \in \langle q_1,\ldots,q_r\rangle, \ q_i\in X\}.
    \]
    The \textbf{$r$th secant variety} of $X$ is the Zariski closure of the set of points of $X$-rank at most $r$, i.e., 
    \[
        \sigma_r(X) := \overline{\{p \in \bbP^N ~:~ \rk_X(p)\leq r\}} = \overline{\bigcup_{q_1,\ldots,q_r\in X}\langle q_1,\ldots,q_r\rangle}.
    \]
\end{definition}

The aforementioned additive decompositions of tensors correspond to the notion of $X$-rank with respect to Segre, Veronese and Segre-Veronese varieties, respectively. Given a partition ${\bf d} = (d_1,\ldots,d_k)$ of $d\in \bbZ_{\geq 1}$ and ${\bf n} = (n_1, \ldots, n_k) \in \bbZ_{\geq 1}^k$, the \textbf{Segre-Veronese variety} $SV_{\bf d, n}$ of rank-one partially symmetric tensors is the image of the map
\begin{equation}\label{eq:segreveronese_parametrization}
\begin{matrix}
    \nu_{\bf d} & : & \bbP\bbC^{n_1+1}\times\cdots\times\bbP\bbC^{n_k+1} & \rightarrow & \bbP({\rm Sym}^{d_1}\bbC^{n_1+1}\otimes\cdots\otimes{\rm Sym}^{d_{k}
    }\bbC^{n_k+1}), \\
    & & ([\bfv_1],\ldots,[\bfv_k]) & \mapsto & [\bfv_1^{\otimes d_1}\otimes\cdots\otimes \bfv_k^{\otimes d_k}].
\end{matrix}
\end{equation}

The standard representation in coordinates is the monomial embedding given by 
\begin{equation}\label{eq:segreveronese_parametrization_variables}
\begin{matrix}
    \nu_{\bf d} & : & \bbP^{n_1}\times\cdots\times\bbP^{n_k} & \rightarrow & \bbP^N, & \qquad N = \prod_i {n_i+d_i \choose d_i} - 1\\
    & & ({\bf x}_1,\ldots,{\bf x}_k) & \mapsto &  (\cdots: {\bf x}_1^{\alpha_1}\cdots{\bf x}_k^{\alpha_k}:\cdots)_{\substack{\alpha_i \in \bbZ_{\geq 0}^{n_i+1} \\ |\alpha_i| = d_i}} & 
\end{matrix}
\end{equation}
where ${\bf x}_i = (x_{i,0}:\cdots:x_{i,n_i})\in\bbP^{n_i}$, $\alpha_i = (\alpha_{i,0},\ldots,\alpha_{i,n})\in\bbZ_{\geq 0}^{n_i+1}$, and ${\bf x}_i^{\alpha_i} := \prod_{j=0}^{n_i} x_{i,j}^{\alpha_{i,j}}$. 

The case ${\bf d}=\mathds{1} := (1,\ldots,1)$ corresponds to {\it Segre varieties} $S_{\bf n} := SV_{\mathds{1},\bfn}$. 
The case $k=1$, 
i.e., ${\bf d}=(d)$, corresponds to {\it Veronese varieties} $V_{d,n} := SV_{(d), (n)}$.

An extensive amount of literature has been dedicated to the study of $X$-ranks of these and many other projective varieties. We refer to \cite{BCCGO:Hitchhiker} for a general overview on these topics. 

\subsection{Multiplicative decompositions and Hadamard powers of algebraic varieties} 
\label{ssec:hadamard_powers}

In \cite{CMS10:GeometryRBM}, Cueto, Morton and Sturmfels developed an algebraic geometry approach to study a statistical model known as the \textit{Restricted Boltzmann Machine} (RBM). The probability distributions arising from such a model can be interpreted as Hadamard products of tensors of prescribed tensor rank, see \cite{Mon16}. Such multiplicative decompositions of tensors have been considered also in \cite{FOW:MinkowskiHadamard,SM18:MixturesTwoModels,oneto2023hadamard}. 

The geometric framework for such multiplicative decompositions is given by \textit{Hadamard products} of algebraic varieties. Once a choice of coordinates is fixed, the coefficient-wise product, known as \textit{Hadamard product}, defines the polynomial map
\begin{equation}\label{eq:hadamard_product_map}
\begin{matrix}
    h & : & \bbC^{N+1} \times \bbC^{N+1} & \rightarrow & \bbC^{N+1}, \\
    & & (\bfx,\bfy) = ((x_0,\ldots,x_N),(y_0,\ldots,y_N)) & \mapsto & \bfx \star \bfy := (x_0y_0,\ldots,x_Ny_N).
\end{matrix}
\end{equation}
This extends to a rational map of projective spaces $h \colon \bbP^N \times \bbP^N \dashrightarrow \bbP^N$. Clearly, the latter map is not defined everywhere, e.g., we cannot Hadamard-multiply two distinct projective coordinate points. Also, as already mentioned, the map depends on the choice of a basis. However, in many applications, a natural choice of basis is forced upon us. This happens, for instance, when we consider a tensor representing the joint probabilities of a set of discrete random variables, as in our motivating example of RBMs. 

\begin{notation}
    We denote by $\mathds{1}$ both the vector with all entries equal to one and the corresponding projective point $(1:\cdots:1) \in \bbP^N$. Note that it is the identity of the Hadamard product. If all coordinates of $p \in \bbP^N$ are non-zero, we write $p^{\star (-1)}$ for its Hadamard-inverse, i.e., $p\star p^{\star (-1)} = \mathds{1}$.
\end{notation}

\begin{definition}\label{def:hadamard-x-rank}
    Let $X\subset \bbP^N$ be a projective variety, and let $p\in \bbP^N$. 
    The \textbf{$X$-Hadamard-rank} of $p$ is the smallest number of points on $X$ whose Hadamard product is $p$, i.e., 
    \[
        \Hrk_{X}(p) := \min\{m \in \bbZ_{\geq 1} ~:~ p = q_1\star\cdots\star q_m, \ q_i \in X\} , 
    \]
    or $\Hrk_{X}(p) := \infty$ if $p$ cannot be expressed as a Hadamard product of points in $X$. 

    We write 
    \[
        \eta_m(X) := \overline{\{p \in \bbP^n~:~ \Hrk_{X}(p)\leq m\}}
    \]
    to indicate the Zariski closure of the set of points having $X$-Hadamard-rank at most equal to $m$. 

    We denote by $\Hrk_{X}^\circ$ the \textbf{generic $X$-Hadamard-rank}. That is the smallest $m \in \bbZ_{\geq 1}$ such that $\eta_m(X)=\bbP^{N}
    $. If such an $m$ does not exist, we write $\Hrk_X^\circ := \infty$.
\end{definition}

The definition of $\eta_m(X)$ is related to the notion of {\it Hadamard products} of projective varieties introduced in \cite{CMS10:GeometryRBM}. We refer to \cite{bocci2024hadamard} for a recent monograph on the subject and for more references.

\begin{definition}
\label{def:Hadarmard-product}
    Let $X,Y\subset \bbP^N$ be projective varieties. The \textbf{Hadamard product} of $X$ and $Y$ is
    \begin{equation}\label{def:HadamardProduct}
        X \star Y := \overline{h(X\times Y)} = \overline{\{p \star q ~:~ p\in X, \ q \in Y, \ p \star q \text{ exists} \}}.
    \end{equation}
    The \textbf{Hadamard powers} of $X$ are defined recursively by
    \[
        X^{\star 0} := \mathds{1} \quad \text{ and }\quad X^{\star m} := X \star X^{\star (m-1)}.
    \]
    Equivalently,
    \[
        X^{\star m} = \overline{\{ q_1 \star \cdots \star q_m : q_1, \ldots, q_m \in X, \ q_1 \star \cdots \star q_m \text{ exists} \}}.
    \]
\end{definition}

As already mentioned, the Hadamard product of algebraic varieties is highly dependent on the choice of coordinates. If $g \in PGL(N)$, then $X\star Y$ can differ from $gX\star gY$. For example, the Hadamard product of a general pair of points is well-defined; however, the Hadamard product of two different coordinate points, such as $(1:0)$ and $(0:1)$ in $\bbP^1$, is not defined. See \cite{BALLICO} for Hadamard products of varieties after a generic change of coordinates. Nevertheless, the Hadamard product is well-behaved with respect to the action of the torus of diagonal matrices; see \Cref{lemma:diagonal_iso} and its consequences.

\begin{remark}
	The variety $X \star Y \subset \bbP V$ can be seen also as the linear projection of the Segre product $X \times Y \subset \bbP(V \otimes V)$ on the diagonal coordinates. By continuity, if $X$ and $Y$ are both irreducible, then $X \star Y$ and all powers $X^{\star m}$ are irreducible.
\end{remark}

\begin{example}
	It is immediate to see that the $X$-Hadamard-rank of a point can be infinite. For example, since an \textit{embedded toric variety} (see \Cref{def:toricvarieties}) $X \subset \bbP^N$ is defined by monomial embeddings and by binomial equations, it is \textit{Hadamard-idempotent}, i.e., $X^{\star 2} = X$, see e.g., \cite[Proposition 4.7]{FOW:MinkowskiHadamard}. In particular, the $X$-Hadamard-rank of any point $p \in \bbP^N$ is either equal to one (if $p\in X$) or it is infinite. 
    
    We will see more on the importance of Hadamard-idempotent varieties in the study of the finiteness of $X$-Hadamard-ranks. See \Cref{sec:finiteness_HadamardRanks}. 
\end{example} 

\begin{remark} 
    We will always assume that $X\subset \bbP^N$ is \textbf{concise}, i.e., it is not contained in any coordinate hyperplane. Indeed, since coordinate hyperplanes are Hadamard-idempotent, if $X$ is not concise, then also all its Hadamard powers are not concise. In such a case, we regard $X$ in its concise ambient space by forgetting the coordinates of the hyperplanes containing it. Note that this is not the case for degenerate varieties contained in arbitrary linear subspaces: indeed, their Hadamard powers might be no longer degenerate. 
\end{remark}

Clearly $X^{\star m} \subset \eta_m(X)$, but it is not difficult to see that, in general, the inclusion might be strict. 

\begin{example}\label{example:powers_vs_ranks}
	Let $p = (1:2) \in \bbP^1$ and $X = \{p\}$. Then, $X^{\star m} = \{(1:2^m)\}$ while $\eta_m(X) = \{(1:2^r)~:~r \leq m\}$. 
\end{example}

\Cref{example:powers_vs_ranks} immediately highlights where the issue is. We always have that 
$
	\eta_m(X) = \bigcup_{r=1}^m X^{\star r} , 
$
but a Hadamard power of $X$ might not be contained in the following one. However, for concise varieties, we have that $\dim \eta_m(X) = \dim X^{\star m}$, since $\dim X^{\star s} \leq \dim X^{\star (s+1)}$ for all $s \geq 1$: indeed, for every $p\in X$ with all coordinates different from zero, $p\star X^{\star s}$ is isomorphic to $X^{\star s}$ and $p\star X^{\star s} \subset X^{\star (s+1)}$. Even if $X$ is irreducible, $\eta_m(X)$ may be a reducible variety and strictly contain $X^{\star m}$. An assumption that guarantees the inclusion between consecutive Hadamard powers and, therefore, the equality $\eta_m(X) = X^{\star m}$, is that $\mathds{1}\in X$. Under this assumption, 
\[
	X\subset X^{\star 2}\subset \cdots \subset X^{\star s} \subset \cdots \subset \bbP^N \quad \text{ and } \quad \eta_s(X) = X^{\star s} \text{ for any }s.
\]
The viceversa is not always true, as shown in the following example. 
\begin{example}
    Consider the following points in $\bbP^3$:
    \[
        p = (1:1:2:3), \quad q = p^{\star(-1)} = (6:6:3:2) . 
    \]
    Let $L = \langle p,q\rangle$ be the line through $p$ and $q$. This has equation: 
\[
L\colon \qquad \begin{cases}
    5x_0 - 16x_2 + 9x_3 = 0\\
    5x_1 - 16x_2 + 9x_3 = 0
\end{cases}.
\]
In \cite[Remark 3.1.1]{antolini2025algebraic} it is shown that in this case $L^{\star 2} = \langle p^{\star 2},p\star q, q^{\star 2}\rangle$. This is a toric plane in $\bbP^3$:
\[
L^{\star 2}\colon \qquad x_0 - x_1 = 0.
\]
Since $L^{\star 2}$ is linear and $p, q \in L^{\star 2}$, we have that $L \subset L^{\star 2} \subset L^{\star 3} = L^{\star 2} \subsetneq \bbP^3$ even if $\mathds{1}\not \in L$.
\end{example}

\begin{example}
	If we consider the $k$-factor {\it Segre product} $S_{\mathds{1}}$, then $\sigma_2(S_{\mathds{1}})^{\star m}$ is the algebraic variety corresponding to the {\it Restricted Boltzmann Machine} with $m$ binary hidden units and $k$ binary observed units, see \cite{Mon16}. In \cite{montufar2015discrete}, the authors introduced {\it Discrete Restricted Boltzmann Machines} by allowing non-binary units. From our definitions, this corresponds to Hadamard powers of secant varieties of Segre varieties. With our more geometric interpretation, we might extend the notion of Discrete RBM models to the case of symmetric tensors, partially symmetric tensors or skew-symmetric tensors by looking at Hadamard products of secant varieties of Veronese varieties, Segre-Veronese varieties and Grassmannians, respectively.
\end{example}

Having the latter as guiding example, the general framework we consider is the notion of Hadamard rank with respect to secant varieties. Hence, we generalize the definition of $X$-Hadamard-rank as follows. 

\begin{definition}
    Let $X\subset \bbP^N$ be a projective variety, and let $p\in \bbP^N$. 
    The \textbf{$r$th $X$-Hadamard-rank} of $p$ is the smallest number of points on $\sigma_r(X)$ such that their Hadamard product is equal to $p$, i.e., 
    \[
        \Hrk_{X,r}(p) := \min\{m \in \bbZ_{\geq 1} ~:~ p = q_1\star\cdots\star q_m, \ q_i \in \sigma_r(X)\} , 
    \]
    or $\Hrk_{X,r}(p) := \infty$ if $p$ cannot be expressed as a Hadamard product of points in $\sigma_r(X)$. 

    Analogously, we denote by $\Hrk_{X,r}^\circ$ the \textbf{generic $r$th $X$-Hadamard-rank}. That is the smallest $m$ such that $\sigma_r(X)^{\star m}$ is equal to $\bbP^N$. 
    
    Note per \Cref{def:secant-variety} and \Cref{def:Hadarmard-product} that $\sigma_r(X)$ and $\sigma_r(X)^{\star m}$ are defined as Zariski closures. In particular, there may exist points $p$ whose $r$th $X$-Hadamard-rank $\Hrk_{X,r}(p)$ is larger than the generic $r$th $X$-Hadamard-rank $\Hrk_{X,r}^\circ$. Actually, in \Cref{example:maximum_nonfinite}, we will see a case in which the generic $X$-Hadamard-rank is finite but there exists an entire locus of points in which the $X$-Hadamard-rank is not even well-defined.
    
    Clearly, the $X$-Hadamard-rank in \Cref{def:hadamard-x-rank} correspond to the case $r=1$.
\end{definition}

\begin{remark}\label{rmk:Hadamard_rank}
    Let $SV_{\bf d, n}$ be a Segre-Veronese variety, i.e., the variety of partially symmetric rank-one tensors in $\bbP({\rm Sym}^{d_1}\bbC^{n_1+1}\otimes\cdots\otimes{\rm Sym}^{d_k}\bbC^{n_k+1})$. The notion of $r$th $X$-Hadamard-rank can be rephrased in terms of decompositions of tensors. If $T\in {\rm Sym}^{d_1}\bbC^{n_1+1}\otimes\cdots\otimes{\rm Sym}^{d_k}\bbC^{n_k+1}$, its $r$th $X$-Hadamard-rank, that we denote by $\Hrk_r^{\mathbf{d}}(T)$, consists of the smallest number $m$ of tensors $A_1,\ldots,A_m$ of partially symmetric rank at most $r$ such that $T = A_1\star \cdots \star A_m$. This general case reduces to the case of decompositions of homogeneous polynomials as Hadamard products of Waring decompositions in the case of Veronese varieties ($\bfd = (d))$ or to the case Hadamard products of tensor-rank decompositions in the case of Segre varieties ($\bfd = \mathds{1})$. 
\end{remark}

Inspired by the literature on decompositions of tensors, it is immediate to raise the following questions.

\begin{question}\label{question:A}
	Under which conditions on $X \subset \bbP^N$ and $p \in \bbP^N$ is the $r$th $X$-Hadamard-rank of $p$ finite? 
    Under which conditions on $X$ is the generic $r$th $X$-Hadamard-rank finite, i.e., $\sigma_r(X)^{\star m} = \bbP^N$ for some $m$?
\end{question}

\begin{question}\label{question:B}
	How can we compute the $r$th $X$-Hadamard-rank of a given point $p \in \bbP^N$?
    Can we find bounds on the maximal $r$th $X$-Hadamard-rank and bounds on the generic $r$th $X$-Hadamard-rank? 
\end{question}

\section{On the finiteness of Hadamard ranks}\label{sec:finiteness_HadamardRanks}
In this section, we focus on \Cref{question:A}. Before approaching it in general, we give an immediate and straightforward proof of the finiteness of $r$th Hadamard ranks with respect to Segre varieties for $r \geq 2$, i.e., finiteness of decompositions of tensors as Hadamard products of rank-$r$ tensors. These are the decompositions of tensors corresponding to the Discrete Restricted Boltzmann Machines \cite{montufar2015discrete} and called $r$-HHDs in \cite{oneto2023hadamard}.

\begin{example}\label{ex:finitness_Hadamard_ranks}(Finiteness of Hadamard ranks of tensors)
    Let $T \in\bbC^{n_1+1}\otimes\cdots\otimes\bbC^{n_k+1}$. 
    The Hadamard rank of the projective point $[T]$ with respect to the Segre variety $S_{{\bfn}} \subset \bbP (\bbC^{n_1+1}\otimes\cdots\otimes\bbC^{n_k+1})$ is equal to the smallest number of factors of an expression $T=A_1\star\cdots\star A_m$, where $\rk A_i\leq r$. 
    Extending the idea of \cite[Proposition 4.13]{FOW:MinkowskiHadamard}, it is immediate to see that $T$ is always the Hadamard product of finitely many tensors of rank at most $r$, for any $r \geq 2$. Let $T = (t_{i_1,\ldots,i_k})_{i_1,\ldots,i_k}$ be a presentation in the chosen basis of $\bbC^{n_1+1}\otimes\cdots\otimes\bbC^{n_k+1}$ and let $\bfv_{i_2,\ldots,i_k} := (t_{0,i_2,\ldots,i_k},\ldots,t_{n_1,i_2,\ldots,i_k}) \in \bbC^{n_1+1}$. Then, 
	\[
		T = \bigstar_{i_2,\ldots,i_k} \left[(\bfv_{i_2,\ldots,i_k}-\mathds{1})\otimes {\bf e}_{i_2}\otimes\cdots\otimes{\bf e}_{i_k} + \mathds{1}\otimes\cdots\otimes\mathds{1}\right].
	\]
    Clearly, this is far from being an optimal expression.
\end{example}

\begin{remark}
\label{rmk:RBM}
	The finiteness of generic $r$th Hadamard ranks with respect to the Segre variety for $r \geq 2$, i.e., the fact that the $m$th Hadamard power of the $r$th secant variety of a Segre variety fills the ambient space for some $m$, follows also from \cite[Theorem 13]{montufar2015discrete}. Here, the authors give a bound on such an $m$ with respect to the size of the tensor and $r$. They do it by showing that Restricted Boltzmann Machines are {\it universal approximators}, namely that they can approximate arbitrary well any joint probability distribution on the discrete sample space $\calX_1 \times \cdots \times \calX_d$ with $\calX_i =\{ 1, \ldots, n_i\}$. In other words, the model defines a dense subset of the probability simplex with respect to the Euclidean topology. In algebraic geometric terms, this implies that the model is not contained in any proper algebraic subvariety and, therefore, its Zariski closure fills the ambient space. Recall that being a Zariski dense subset over the real numbers is weaker than being Euclidean dense. In \Cref{cor:finitness_partiallysymmetric_Hadamard_ranks}, we extend this result to the case of any partially-symmetric tensor as a corollary of a geometric result on generic $r$th $X$-Hadamard-ranks with respect to embedded toric varieties.
\end{remark}

\subsection{On the finiteness of maximum $X$-Hadamard-ranks}\label{ssec:finiteness_Hadamard_maximum}

We approach \Cref{question:A} by presenting first some particular geometric property on the projective variety $X\subset \bbP^N$ which guarantees the finiteness of $X$-Hadamard-rank for \textit{any} point in $\bbP^N$. 

In \cite{BCK17}, Hadamard products of linear spaces have been systematically studied. In particular, they obtain the following useful results about Hadamard powers of lines.

\begin{notation}
    Let $\Delta_i \subset \bbP^N$ be the set of points having at most $i+1$ non-zero coordinates in the fixed basis.
\end{notation}

\begin{lemma}\label{lemmas:BCK17}
	Let $L \subset \bbP^N$ be a line such that $L \cap \Delta_{N-2} = \emptyset$. Then:
	\begin{enumerate}
		\item \cite[Lemma 2.10]{BCK17} $L^{\star s} = \bigcup_{q_1,\ldots,q_s \in L} (q_1\star\cdots\star q_s)$, i.e., the closure in \eqref{def:HadamardProduct} is not necessary;
		\item \cite[Theorem 3.4]{BCK17} $L^{\star s}$ is a projective linear space of dimension equal to $\min\{s,N\}$.
	\end{enumerate}
\end{lemma}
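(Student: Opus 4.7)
The plan is to reduce both claims to a linear-algebra statement about evaluation of binary forms at a fixed set of distinct points on $\bbP^1$. First I would set up convenient coordinates on $L$: choose two distinct points $p = (a_0:\cdots:a_N),\, p' = (b_0:\cdots:b_N) \in L$ and parameterize $L$ by $(\alpha:\beta) \in \bbP^1$ via $(\alpha:\beta) \mapsto [\ell_0(\alpha,\beta) : \cdots : \ell_N(\alpha,\beta)]$, where $\ell_i(\alpha,\beta) := a_i\alpha + b_i\beta$. The hypothesis $L\cap \Delta_{N-2}=\emptyset$ means that every point of $L$ has at most one vanishing coordinate, which translates to the statement that each $\ell_i$ is non-zero and no two $\ell_i, \ell_j$ (for $i\neq j$) share a root. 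Equivalently, the $N+1$ zeros $P_i := [-b_i:a_i] \in \bbP^1$ are pairwise distinct.

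The key observation is that the Hadamard power map becomes evaluation of binary forms. Given $q_k \in L$ corresponding to $(\alpha_k:\beta_k)$, the $i$-th coordinate of $q_1\star\cdots\star q_s$ is
\[
\prod_{k=1}^s \ell_i(\alpha_k,\beta_k) = F(a_i,b_i), \qquad F(X,Y) := \prod_{k=1}^s (\alpha_k X + \beta_k Y) \in \Sym^s\bbC^2.
\]
So the Hadamard power map factors as $(\bbP^1)^s \to \bbP(\Sym^s\bbC^2) \xrightarrow{\psi} \bbP^N$, where the first arrow sends $(q_k)$ to the class of $F$, and $\psi$ is induced by the linear evaluation map $\Sym^s\bbC^2 \to \bbC^{N+1}$, $F\mapsto (F(a_0,b_0),\ldots,F(a_N,b_N))$. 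Over $\bbC$, the fundamental theorem of algebra makes the first map surjective. A standard dimension count (the kernel of $\psi$ is the space of forms of degree $s$ vanishing at the $N+1$ distinct points $P_i$, which has dimension $\max\{0,s-N\}$) gives $\operatorname{rk}\psi = \min\{s+1,N+1\}$, so $\bbP(\operatorname{image}\psi)$ is a linear subspace of $\bbP^N$ of dimension $\min\{s,N\}$. Since this image is already Zariski closed (it is linear), both (1) and (2) follow.

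The only subtlety, and the step I would single out as the main obstacle, is checking that the Hadamard product $q_1\star\cdots\star q_s$ is actually defined (i.e., $F$ does not vanish at \emph{all} the $P_i$). For $s \leq N$ this is automatic: a non-zero degree-$s$ form has at most $s < N+1$ roots, so $\psi$ is injective and every factorizable non-zero $F$ gives a well-defined Hadamard product, and by surjectivity one realizes every point of the linear subspace. For $s > N$, $\ker\psi$ is non-trivial, but surjectivity of $\psi$ and the splitting of binary forms allows any prescribed non-zero target $p\in \bbP^N$ to be hit by a factorizable $F$ with $\psi(F)\neq 0$ (choose any lift and, if necessary, perturb inside the fiber to avoid $\ker\psi$). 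This confirms that no Zariski closure is required in the description of $L^{\star s}$, completing the argument.
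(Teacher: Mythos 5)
Your proof is correct. Note, though, that the paper itself does not prove this lemma: it is quoted verbatim from \cite{BCK17} (Lemma 2.10 and Theorem 3.4 there), so any self-contained argument is necessarily a ``different route'' from what the paper offers. Your reduction is a nice one: parameterizing $L$ by $\bbP^1$ turns the $s$-fold Hadamard product into evaluation of the split binary form $F=\prod_k(\alpha_kX+\beta_kY)$ at the $N+1$ points $(a_i:b_i)$, the hypothesis $L\cap\Delta_{N-2}=\emptyset$ is exactly the statement that these evaluation points are well-defined and pairwise distinct, and then both claims collapse to the classical fact that degree-$s$ forms vanishing at $N+1$ distinct points of $\bbP^1$ form a space of dimension $\max\{0,s-N\}$, together with the fundamental theorem of algebra guaranteeing that \emph{every} nonzero binary form factors into linear forms (this is what kills the Zariski closure). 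This is more elementary and more self-contained than the arguments in \cite{BCK17}, which treat Hadamard products of linear spaces of arbitrary dimension and rely on tropical/Terracini-type machinery; the price is that your argument is genuinely special to lines, where the factorization step is available. Two small remarks: you should state explicitly that each $\ell_i\not\equiv 0$ (if some $\ell_i\equiv 0$ then $L$ lies in a coordinate hyperplane and, since some other coordinate form must vanish somewhere on $\bbP^1$, $L$ meets $\Delta_{N-2}$ --- so this too is forced by the hypothesis); and in the last paragraph the ``perturb inside the fiber'' step is vacuous, since any lift of a nonzero target vector automatically lies outside $\ker\psi$, so the image of the un-closed Hadamard product set is exactly $\bbP(\operatorname{image}\psi)$ with no further work.
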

\Cref{lemmas:BCK17} allows us to deduce the finiteness of the $X$-Hadamard-rank of any point in the ambient space of $X$ under the assumption that $X$ contains a line $L$ such that $L \cap \Delta_{N-2}=\emptyset$.

\begin{proposition}
\label{cor:finiteness_Hadamard_ranks}
	If $L \subset X \subset \bbP^N$, where $X$ is any projective variety and $L$ is a line such that $L \cap \Delta_{N-2} = \emptyset$, then the $X$-Hadamard-rank is finite for any point in $\bbP^N$.
\end{proposition}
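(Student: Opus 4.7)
The plan is to use Lemma \ref{lemmas:BCK17} directly: since $L$ avoids $\Delta_{N-2}$, its Hadamard powers behave as nicely as possible, and they very quickly sweep out the whole ambient space on the nose (without closure), so any point of $\bbP^N$ is already a Hadamard product of a controlled number of points of $L \subset X$.

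More precisely, I would argue as follows. Fix any $p \in \bbP^N$. By part (2) of Lemma \ref{lemmas:BCK17}, $L^{\star N}$ is a projective linear subspace of $\bbP^N$ of dimension $\min\{N,N\} = N$, hence $L^{\star N} = \bbP^N$. In particular, $p \in L^{\star N}$. By part (1) of Lemma \ref{lemmas:BCK17}, the set $L^{\star N}$ coincides (without taking Zariski closure) with $\bigcup_{q_1,\ldots,q_N \in L}(q_1 \star \cdots \star q_N)$. Therefore there exist actual points $q_1,\ldots,q_N \in L$ with $p = q_1 \star \cdots \star q_N$. Since $L \subset X$, each $q_i$ lies in $X$, so $\Hrk_X(p) \leq N < \infty$.

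There is essentially no obstacle beyond quoting the two parts of Lemma \ref{lemmas:BCK17} correctly; the key subtlety is that part (1) guarantees the \emph{set-theoretic} equality and not merely the closure, which is exactly what lets us conclude a \textbf{genuine} decomposition of $p$ and not just approximability of $p$ by such decompositions. Because $p$ was arbitrary, we moreover obtain the uniform bound $\Hrk_X(p) \leq N$ for every $p \in \bbP^N$, which is stronger than the statement of the proposition.
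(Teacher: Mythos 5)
Your argument is correct and is essentially the paper's own proof: both reduce to $\Hrk_L$ via $L \subset X$, use part (2) of \Cref{lemmas:BCK17} to get $L^{\star N} = \bbP^N$, and use part (1) to upgrade membership in $L^{\star N}$ to a genuine (closure-free) Hadamard decomposition of length $N$. The uniform bound $\Hrk_X(p) \leq N$ you note at the end is also implicit in the paper's version.
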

\begin{proof}
	Since $L \subset X$, then $\Hrk_X(p) \leq \Hrk_L(p)$ for every $p \in \bbP^N$. By \Cref{lemmas:BCK17}, we know that $L^{\star N} = \{p ~:~ \Hrk_L(p) \leq N\} = \bbP^N$. Hence, $\Hrk_L(p) \leq N$ for any $p \in \bbP^N$.
\end{proof}

As an immediate consequence, we extend \Cref{ex:finitness_Hadamard_ranks} to $r$th $X$-Hadamard-ranks with respect to \textit{toric varieties}. We recall here the definition of toric variety that we will use through the paper: this coincides with what in the algebraic geometry literature is often referred to as an \textit{embedded} toric variety, in contrast to the more general definition of \textit{abstract} toric varieties, see \cite[Section 2.3]{cox2011toric}.

\begin{notation}
    We write $\bbC^\times := \bbC \smallsetminus \{0\}$. \\
    Given a vector $\alpha \in \bbZ^{n+1}_{\geq 0}$ and a vector $\bfx \in \bbC^{n+1}$, we use the notation for monomials $\bfx^\alpha := x_0^{\alpha_0}\cdots x_n^{\alpha_n}.$
\end{notation}

\begin{definition}
\label{def:toricvarieties}
    An \textbf{(embedded) toric variety} is a positive-dimensional projective variety $X\subset \bbP^N$ defined as the Zariski closure of the image of a monomial map $\varphi_A \colon (\mathbb{C}^\times)^{n+1} \to \bbP^N, \ \bfx \mapsto (\bfx^{\alpha_0}:\cdots:\bfx^{\alpha_N})$, where $A = (\alpha_0|\cdots|\alpha_N) \in\bbZ_{\geq 0}^{(n+1) \times (N+1)}$ is a matrix with $|\alpha_i|=|\alpha_j|$ for any $i \neq j$.
    We will always assume that toric varieties are non-degenerate in the sense that $\alpha_i \neq \alpha_j$ for all $i\neq j$. We denote by $\widehat{\varphi}_A\colon (\bbC^\times)^{n+1} \to \bbC^{N+1}$ the corresponding affine monomial map when we interpret it as parametrizing the affine cone of $X$ in $\bbC^{N+1}$.
\end{definition}

\begin{remark}\label{rmk:reparametrize_toric}
    An embedded toric variety as in \Cref{def:toricvarieties} is determined by a matrix $A \in \bbZ^{(n+1) \times (N+1)}$. Recall that such an embedded toric variety $X \subset \bbP^N$ is uniquely determined by the row span of the matrix $A$: indeed, the ideal of the toric variety is uniquely determined by the kernel of the matrix $A$, see, e.g., \cite[Proposition 1.1.9]{cox2011toric}. Moreover, since we are considering projective toric varieties given by a homogeneous monomial parametrization, we have that $\mathds{1}\in \text{rowspan}(A)$. 
    In this way,  given a toric variety as in \Cref{def:toricvarieties}, we will often consider a monomial map induced by a matrix $\bar{A} \in \bbZ^{(n+1) \times (N+1)}$ having the same rowspan of $A$, but with the first row equal to $\mathds{1}$ and the first column equal to $(1,0,\ldots,0)$; in this way we get a monomial map $\varphi_{\bar{A}} \colon (\bbC^\times)^{n+1} \rightarrow U_0$ where $U_0$ is the affine chart of $\bbP^N$ with the first coordinate different from $0$. The images of the two monomial maps $\varphi_A$ and $\varphi_{\bar{A}}$ might be different, but they have the same Zariski closure in $\bbP^N$.
\end{remark}
\begin{notation}
    We denote $[n] := \{0,1,\ldots,n\}$.
\end{notation}

\begin{example}
\label{example:reparametrize_segre}
    Consider the Segre variety $S_{\bfn}$ whose usual embedding is given by the map $\nu_{\mathds{1}}$ defined in \eqref{eq:segreveronese_parametrization_variables} for $d_1 = \cdots = d_k = 1$. This is the monomial parametrization corresponding to the matrix $B_{\bf n}$ whose rows are labeled by $\{(j,i_j) ~:~ j \in \{1,\ldots,k\}, i_j \in [n_k]\}$, columns are labeled by multi-indices $\mathbf{i}' = (i'_1,\ldots,i'_k) \in [n_1]\times\cdots\times [n_k]$, and $[B_{\bf n}]_{(j,i_j),\mathbf{i}'} = \delta_{i_j,i'_j}$. For example, the matrix defining the Segre embedding of $\bbP^1 \times \bbP^2$ in $\bbP^5$ is given by the matrix
    \[
        B_{1,2} = \begin{pNiceMatrix}[first-col,first-row]
              & (0,0) & (0,1)  & (0,2) & (1,0) & (1,1) & (1,2) \\
            (1,0) & 1 & 1 & 1 & 0 & 0 & 0 \\
            (1,1) & 0 & 0 & 0 & 1 & 1 & 1 \\
            (2,0) & 1 & 0 & 0 & 1 & 0 & 0 \\
            (2,1) & 0 & 1 & 0 & 0 & 1 & 0 \\
            (2,2) & 0 & 0 & 1 & 0 & 0 & 1 \\
\end{pNiceMatrix},
    \]
    corresponding to the monomial map 
    \[
    \begin{array}{c c c c}
        \varphi_{B_{1,2}} : & (\bbC^\times)^2 \times (\bbC^\times)^3 & \longrightarrow & \bbP^5 \\
        & ((a_{1,0},a_{1,1}),(a_{2,0},a_{2,1},a_{2,2})) & \longmapsto &(a_{1,0}a_{2,0}:a_{1,0}a_{2,1}:a_{1,0}a_{2,2}:a_{1,1}a_{2,0}:a_{1,1}a_{2,1}:a_{1,1}a_{2,2}).
    \end{array}
    \]
    However, such a matrix $B_{\bfn}$ is not full rank. In particular, we can consider a monomial parametrization associated to the matrix $\bar{B}_{\bfn}$ obtained by deleting all the rows corresponding to $(i,0)$, for all $i = 1,\ldots,k$, and adding a first row equal to $\mathds{1}$, labeled by $0$; namely,
    \[
        \bar{B}_{1,2} = \begin{pNiceMatrix}[first-col,first-row]
              & (0,0) & (0,1)  & (0,2) & (1,0) & (1,1) & (1,2) \\
            0 & 1 & 1 & 1 & 1 & 1 & 1 \\
            (1,1) & 0 & 0 & 0 & 1 & 1 & 1 \\
            (2,1) & 0 & 1 & 0 & 0 & 1 & 0 \\
            (2,2) & 0 & 0 & 1 & 0 & 0 & 1 \\
        \end{pNiceMatrix},
    \]
    corresponding to the monomial map
    \[
    \begin{array}{c c c c}
        \varphi_{\bar{B}_{1,2}} : & (\bbC^\times)^4 & \longrightarrow & \bbP^5 \\
        & (a_0,a_{1,1},a_{2,1},a_{2,2}) & \longmapsto & (a_{0}:a_{0}a_{2,1}:a_{0}a_{2,2}:a_{0}a_{1,1}:a_0a_{1,1}a_{2,1}:a_0a_{1,1}a_{2,2}).
    \end{array}
    \]
    Note that the map $\varphi_{B_{1,2}}$ is surjective onto the Segre variety $S_{1,2}$, while the image of $\varphi_{\bar{B}_{1,2}}$ is equal to the affine chart of $S_{1,2}$ given by the subset of points with first coordinate different than zero.
\end{example}  

Note that if $X \subset \bbP^N$ is an embedded toric variety, then $\mathds{1} =\varphi_A(\mathds{1})\in X$ and the generic point $\bfx = (x_0:\cdots:x_N) \in X$ has pairwise distinct entries, i.e., $\det \left(\begin{smallmatrix}
	1 & 1 \\ x_i & x_j
\end{smallmatrix}\right) \neq 0$ for all $i \neq j$. Therefore, for a generic $\bfx \in X$, $L = \langle \mathds{1}, \bfx\rangle$ is a line that does not contain any point with two zero-entries, i.e., it satisfies the assumptions of \Cref{lemmas:BCK17} for $\bfx \in X$ generic. Such a line is a secant line to the variety $X$ and, in particular, $L \subset \sigma_r(X)$ for any $r \geq 2$. Therefore, from this observation, together with \Cref{cor:finiteness_Hadamard_ranks}, we deduce the following. 
\begin{corollary}\label{cor:finiteness_Hadamard_ranks_toric}
	Let $X$ be a non-degenerate toric variety $X\subset \bbP^N$. Then, for any $r \geq 2$, the $r$th $X$-Hadamard-rank is finite for any point in $\bbP^N$.
\end{corollary}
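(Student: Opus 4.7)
The argument is essentially already outlined in the paragraph preceding the statement: the plan is to apply \Cref{cor:finiteness_Hadamard_ranks} with $\sigma_r(X)$ playing the role of $X$. It therefore suffices to exhibit a single line $L \subset \sigma_r(X)$ with $L \cap \Delta_{N-2} = \emptyset$; the conclusion then reads $\Hrk_{X,r}(p) \leq N$ for every $p \in \bbP^N$.

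Concretely, I would first observe that $\mathds{1} \in X$ because $\varphi_A(\mathds{1}) = \mathds{1}$. Then I would verify that a generic $\bfx \in X$ has pairwise distinct coordinates: by the non-degeneracy assumption built into \Cref{def:toricvarieties}, the exponent vectors $\alpha_i$ are pairwise distinct, so for every $i \neq j$ the monomial difference $\bfx^{\alpha_i} - \bfx^{\alpha_j}$ is a non-zero regular function on the affine cone over $X$. Intersecting the finitely many dense open conditions $\{x_i \neq x_j\}$ produces a dense open subset $U \subset X$ of points whose coordinates are all distinct.

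Fixing any $\bfx \in U$, the line $L = \langle \mathds{1}, \bfx \rangle$ satisfies $L \cap \Delta_{N-2} = \emptyset$: if two coordinates $\lambda + \mu x_i$ and $\lambda + \mu x_j$ of a point of $L$ were to vanish simultaneously, then either $(\lambda,\mu) = (0,0)$ or $x_i = x_j$, both excluded. Since $L$ is spanned by two points of $X$, we have $L \subset \sigma_2(X) \subset \sigma_r(X)$ for every $r \geq 2$, and \Cref{cor:finiteness_Hadamard_ranks} applied to $\sigma_r(X)$ yields the claim. The only substantive ingredient is the density of $U$, which relies precisely on the non-degeneracy of $A$; without this hypothesis, two coordinate functions on $\bbP^N$ could agree on all of $X$ and no secant line through $\mathds{1}$ would avoid $\Delta_{N-2}$.
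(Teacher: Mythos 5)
Your proposal is correct and follows exactly the paper's own argument: the paragraph preceding the statement takes $\mathds{1}=\varphi_A(\mathds{1})\in X$, uses non-degeneracy to get a generic $\bfx\in X$ with pairwise distinct coordinates, observes that the secant line $L=\langle\mathds{1},\bfx\rangle\subset\sigma_r(X)$ avoids $\Delta_{N-2}$, and concludes via \Cref{cor:finiteness_Hadamard_ranks}. You have merely filled in the routine details (density of the locus with distinct coordinates, the explicit check that $L\cap\Delta_{N-2}=\emptyset$), all of which are sound.
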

Since Segre-Veronese varieties are toric, we extend \Cref{ex:finitness_Hadamard_ranks} to partially-symmetric tensors. 

\begin{corollary}[Finiteness of Hadamard ranks of partially-symmetric tensors]\label{cor:finitness_partiallysymmetric_Hadamard_ranks}
	Fix any $r \geq 2$. Then, any partially-symmetric tensor $T \in \Sym^{d_1}\bbC^{n_1+1} \otimes \cdots \otimes \Sym^{d_k}\bbC^{n_k+1}$ can be written as a Hadamard product of finitely many partially-symmetric tensors of partially-symmetric rank at most $r$. 
\end{corollary}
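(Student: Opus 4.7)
The plan is to deduce this as an immediate corollary of \Cref{cor:finiteness_Hadamard_ranks_toric} applied to $X = SV_{\mathbf{d},\mathbf{n}}$. The key observation is that the Segre-Veronese variety is a non-degenerate embedded toric variety: its standard embedding \eqref{eq:segreveronese_parametrization_variables} is precisely a monomial map $\varphi_A$ in the sense of \Cref{def:toricvarieties}, where the columns of $A$ are the exponent vectors of the distinct monomials $\mathbf{x}_1^{\alpha_1}\cdots\mathbf{x}_k^{\alpha_k}$ with $|\alpha_i|=d_i$, all of total degree $d_1+\cdots+d_k$. Non-degeneracy (distinct exponent vectors) is clear since the monomials are pairwise distinct.

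Next, I would translate between the geometric $X$-Hadamard rank and the tensorial language used in the statement, as already indicated in \Cref{rmk:Hadamard_rank}: a point $[T]\in\bbP(\Sym^{d_1}\bbC^{n_1+1}\otimes\cdots\otimes\Sym^{d_k}\bbC^{n_k+1})$ has partially-symmetric rank at most $r$ precisely when $[T]\in \sigma_r(SV_{\mathbf{d},\mathbf{n}})$, so expressing $T$ as a Hadamard product of $m$ tensors of partially-symmetric rank at most $r$ is the same as saying $\Hrk_{SV_{\mathbf{d},\mathbf{n}},r}([T])\leq m$.

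Putting these two observations together, \Cref{cor:finiteness_Hadamard_ranks_toric} applied to $SV_{\mathbf{d},\mathbf{n}}$ gives, for every $r\geq 2$, a finite $m$ such that every point of the projective ambient space admits a Hadamard factorization into $m$ elements of $\sigma_r(SV_{\mathbf{d},\mathbf{n}})$. Reinterpreting this decomposition at the level of (affine representatives of) tensors yields the stated conclusion. There is essentially no obstacle here: the only thing to keep in mind is to rescale a projective Hadamard decomposition to an equality of tensors, which is possible because we are free to absorb a nonzero scalar into any one of the partially-symmetric-rank-$r$ factors.
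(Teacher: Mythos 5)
Your proposal is correct and matches the paper's own argument, which derives this corollary in one line from \Cref{cor:finiteness_Hadamard_ranks_toric} by observing that Segre-Veronese varieties are non-degenerate embedded toric varieties; your additional remarks on the tensorial translation (\Cref{rmk:Hadamard_rank}) and on rescaling projective representatives simply make explicit what the paper leaves implicit.
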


Even if \Cref{cor:finiteness_Hadamard_ranks_toric} gives a positive answer to \Cref{question:A} for interesting varieties related to tensor decompositions, it is restrictive as it requires that the variety contains a special line as in \Cref{cor:finiteness_Hadamard_ranks}.

In order to relax the problem, in \Cref{sec:infiniteness_generic_Hrk} we study finiteness of \textit{generic} $X$-Hadamard-ranks. In the process of relaxing the problem, it is natural to ask whether the finiteness of the generic $X$-Hadamard-rank would be enough to guarantee the finiteness of the $X$-Hadamard-rank of \textit{any} point in the ambient space. The following example shows that, for very special points, this is not the case. 

\begin{example}\label{example:maximum_nonfinite}
    If $X\subset \bbP^N$ and the generic $X$-Hadamard-rank is finite, then there exists a Zariski open set $U \subset \bbP^N$ such that $\Hrk_X(p) \leq \Hrk_X^\circ$ for all $p \in U$. However, this does not imply that $\Hrk_X(p) < \infty$ for all $p \in \bbP^N$. As an example, let $Q = \{ x_0x_1 + x_0x_2 + x_1x_2 = 0 \} \subset \bbP^2$. Then, $Q^{\star 2} = \bbP^2$ and $\Hrk_Q^\circ = 2$. However, every point $(a:b:0)$ such that $ab \neq 0$ cannot be written as a product of two points on $Q$. In fact, suppose that $(a:b:0) = (x_0y_0:x_1y_1:x_2y_2)$ for some $(x_0:x_1:x_2), (y_0:y_1:y_2) \in Q$. Then, $x_2y_2 = 0$. Suppose that $x_2 = 0$. Then, since $(x_0:x_1:x_2) \in Q$, $x_0x_1 = 0$, from which $ab= 0$, which is a contradiction. Same if $y_2 = 0$. In particular, for all $a, b \neq 0$, we have that $\Hrk_Q((a:b:0)) = \infty$.
\end{example}

However, the type of failure that we have illustrated in the latter example cannot occur if we consider points with all coordinates different from zero. 

\begin{proposition}\label{prop:maximum_rank_atmost_twice_generic}
    Let $X\subset \bbP^N$ be an irreducible variety with $\Hrk_X^\circ<\infty$ and $\bfx = (x_0:\cdots:x_N) \in \bbP^N$ be any point such that $\prod_i x_i \neq 0$. Then, $\Hrk_X(\bfx)\leq 2 \cdot \Hrk_X^\circ$.
\end{proposition}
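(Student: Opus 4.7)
The plan is to exploit Hadamard invertibility of points with all non-zero coordinates to split $\bfx$ as a product of two ``generic enough'' points, each of which then has $X$-Hadamard-rank at most $m := \Hrk_X^\circ$.

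Set $m = \Hrk_X^\circ < \infty$. By definition of generic Hadamard rank, the constructible set
\[
	U := \{p \in \bbP^N ~:~ \Hrk_X(p) \leq m\}
\]
has Zariski closure equal to $\bbP^N$, hence contains a Zariski dense open subset $U^\circ \subseteq \bbP^N$. Let $T \subseteq \bbP^N$ denote the open torus consisting of points with all coordinates non-zero; by assumption $\bfx \in T$.

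The key step is the following: consider the map
\[
	f_\bfx : T \longrightarrow T, \qquad \bfy \longmapsto \bfx \star \bfy^{\star(-1)}.
\]
This is well-defined on $T$ since $\prod x_i \neq 0$, and it is an involutive automorphism of $T$ (so, in particular, an isomorphism of quasi-projective varieties). Therefore $f_\bfx^{-1}(U^\circ \cap T)$ is open and dense in $T$, hence open and dense in $\bbP^N$. Intersecting with $U^\circ \cap T$, we find that
\[
	W := (U^\circ \cap T) \cap f_\bfx^{-1}(U^\circ \cap T)
\]
is a non-empty (in fact, Zariski dense) open subset of $\bbP^N$.

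To conclude, pick any $\bfy \in W$. Then both $\bfy$ and $f_\bfx(\bfy) = \bfx \star \bfy^{\star(-1)}$ lie in $U^\circ \subseteq U$, so each of them can be expressed as a Hadamard product of $m$ points of $X$. Since
\[
	\bfx = \bfy \star (\bfx \star \bfy^{\star(-1)}),
\]
concatenating these two decompositions yields an expression of $\bfx$ as a Hadamard product of $2m$ points of $X$, giving $\Hrk_X(\bfx) \leq 2m = 2 \Hrk_X^\circ$ as desired.

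The only subtle point is guaranteeing that $U^\circ$ actually exists, i.e., that the constructible set $U$ of points of finite $X$-Hadamard-rank contains a Zariski dense open subset of $\bbP^N$; this is immediate from $\eta_m(X) = \bbP^N$ since a constructible set whose closure fills the ambient irreducible variety must contain a dense open subset. No irreducibility of $X$ is actually needed beyond ensuring the standard behaviour of $\eta_m(X)$, but it plays no essential role in the argument.
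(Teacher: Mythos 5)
Your proof is correct and follows essentially the same route as the paper's: both arguments choose $\bfy$ in the intersection of two dense open preimages of the good locus $U$ (one under Hadamard multiplication by $\bfx$, one under Hadamard inversion) and split $\bfx$ into two factors each of $X$-Hadamard-rank at most $\Hrk_X^\circ$. The paper writes $\bfx = (\bfx\star\bfy)\star\bfy^{\star(-1)}$ while you write $\bfx = \bfy\star(\bfx\star\bfy^{\star(-1)})$, which is the same argument up to the substitution $\bfy \mapsto \bfy^{\star(-1)}$.
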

\begin{proof}
    Since $\Hrk_X^\circ < \infty$, there exists a non-empty Zariski open set $U \subset \bbP^N$ such that $\Hrk_X(\bfy)\leq\Hrk_X^\circ$ for all $\bfy \in U$. Since, by assumption, $\prod_i x_i \neq 0$, the coordinate-wise multiplication map $m_\bfx \colon \bbP^N \to \bbP^N, \ \bfy \mapsto \bfx\star \bfy$ is an isomorphism. Let $\text{Crem}_N \colon \bbP^N \dashrightarrow \bbP^N, \ \bfy\mapsto \bfy^{\star (-1)}$ be the Cremona map. Since $\text{Crem}_N$ is birational and $m_\bfx$ is an isomorphism, both $\text{Crem}_N^{-1}(U)$ and $m_\bfx^{-1}(U)$ are non-empty Zariski open subsets. Hence, $m_\bfx^{-1}(U)\cap \text{Crem}_N^{-1}(U)$ is a non-empty Zariski open set. For any $\bfy \in m_\bfx^{-1}(U)\cap \text{Crem}_N^{-1}(U)$, we can write $\bfx = (\bfx\star \bfy)\star \bfy^{\star (-1)}$. Since $\bfx \star \bfy, \ \bfy^{\star (-1)} \in U$, the claim follows.
\end{proof}

Interestingly, the previous result can be regarded as a \textit{multiplicative} version of the fact that the maximum $X$-rank is at most twice the generic $X$-rank, see \cite[Theorem 1]{blekherman2015maximum}.   

\subsection{On the (in)finiteness of the generic $X$-Hadamard-rank}\label{sec:infiniteness_generic_Hrk} Here, we consider the relaxed version of \Cref{question:A} by focusing on finiteness of \textit{generic} $X$-Hadamard-ranks under the only assumption that $\mathds{1}\in X$. As already mentioned, this implies that
\begin{equation}\label{eq:chain_inclusions}
	X \subset X^{\star 2} \subset \cdots \subset X^{\star r} \subset \cdots \subset \bbP^N.
\end{equation}
Recall that if $X$ is an irreducible variety, then $X^{\star i}$ is an irreducible variety for all $i \in \bbZ_{\geq 1}$. In particular, we have that the chain of Hadamard powers of $X$ in \eqref{eq:chain_inclusions} eventually stabilizes. 
We are interested in understanding under which conditions it stabilizes \textit{before} filling the ambient space.

A necessary condition for the finiteness of the generic $X$-Hadamard-rank is that $X$ is not contained in a proper \textit{Hadamard-idempotent variety}, i.e., a variety $Y$ such that $Y^{\star 2} = Y \neq \mathbb{P}^N$. Indeed, if $X\subset Y\subsetneq \bbP^N$ with $Y^{\star 2} = Y$, then $X^{\star m} \subset Y^{\star m} = Y\subsetneq \bbP^N$. For example, since they are defined by monomial embeddings, embedded toric varieties are Hadamard-idempotent, see also \cite[Proposition 4.7]{FOW:MinkowskiHadamard}. 

We give a complete characterization of projective varieties whose generic $X$-Hadamard-rank is infinite: these are the projective varieties contained in Hadamard-idempotent varieties \textit{up to a diagonal coordinate change}. 

\begin{notation}
    We denote by $\mathbb{T} \subset PGL(N)$ the torus of complex diagonal $(N+1)\times(N+1)$ matrices with non-zero entries on the diagonal, up to non-zero scaling.
\end{notation}

\begin{proposition}
\label{prop:contained_in_idempotent}
	Let $X \subset \bbP^N$ be an irreducible variety. If the generic $X$-Hadamard-rank is infinite, then there exists a diagonal invertible matrix $t\in \mathbb{T}$ such that $tX$ is contained in a proper Hadamard-idempotent irreducible variety.
\end{proposition}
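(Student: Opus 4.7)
The strategy is to use the diagonal torus $\bbT$ to reduce to the case $\mathds{1} \in X$, where the chain of Hadamard powers is monotone increasing, and then to observe that this chain must stabilize at a proper Hadamard-idempotent subvariety.

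First, because $X$ is concise and irreducible, $X$ contains a point $p$ with all non-zero coordinates (the complement of $\bigcup_i \{x_i = 0\}$ in $X$ is a non-empty Zariski open subset). Setting $t := \diag(p^{\star(-1)}) \in \bbT$, we get $tp = \mathds{1} \in tX$. The diagonal action commutes nicely with Hadamard products: a direct computation gives $(tx_1) \star \cdots \star (tx_m) = t^m(x_1\star\cdots\star x_m)$, and hence $(tX)^{\star m} = t^m \cdot X^{\star m}$. Since $t^m \in \bbT$ is a projective automorphism, we conclude that $\Hrk_{tX}^\circ = \Hrk_X^\circ = \infty$, so $(tX)^{\star m} \subsetneq \bbP^N$ for every $m \geq 1$.

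Second, because $\mathds{1} \in tX$, Hadamard-multiplying an element of $(tX)^{\star m}$ by $\mathds{1} \in tX$ gives the monotone chain
\[
	tX \subset (tX)^{\star 2} \subset (tX)^{\star 3} \subset \cdots \subsetneq \bbP^N
\]
of irreducible subvarieties (irreducibility is preserved by the remark following \Cref{def:Hadarmard-product}). Since dimensions are bounded by $N$, the chain stabilizes at some index $m_0$; let $Y := (tX)^{\star m_0}$. Unwinding the definition of iterated Hadamard powers as closures of sets of products gives $Y \star Y = (tX)^{\star 2m_0} = Y$, so $Y$ is Hadamard-idempotent, irreducible, proper (by the previous step), and contains $tX$, as required.

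The argument is essentially bookkeeping; the conceptual input is the torus-equivariance identity $(tX)^{\star m} = t^m X^{\star m}$, which upgrades the hypothesis ``$\Hrk_X^\circ = \infty$'' into the statement that some translate of $X$ admits an honest increasing chain of Hadamard powers that never fills $\bbP^N$. The only mild subtlety is verifying that $Y \star Y = (tX)^{\star 2m_0}$ when $Y = (tX)^{\star m_0}$, which follows directly from the characterization of $W^{\star k}$ as the Zariski closure of the set of $k$-fold products from $W$ given in \Cref{def:Hadarmard-product}.
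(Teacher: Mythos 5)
Your argument is correct and is essentially the paper's own proof: translate $X$ by a diagonal matrix so that $\mathds{1}\in tX$, use the equivariance $(tX)^{\star m}=t^{m}X^{\star m}$ (the content of \Cref{lemma:diagonal_iso}, which your direct ``set-level identity plus closure under an automorphism'' argument establishes more simply than the paper's Terracini-based proof) to conclude $\Hrk_{tX}^{\circ}=\infty$, and then stabilize the increasing chain of irreducible Hadamard powers to land on a proper Hadamard-idempotent variety (this is exactly \Cref{lemma:containsall1}). The only omission is the non-concise case: the proposition does not assume $X$ is concise, and if $X$ lies in a coordinate hyperplane it need not contain a point with all coordinates non-zero, so your first step does not apply; the paper disposes of this in one line by observing that a coordinate hyperplane is itself a proper irreducible Hadamard-idempotent variety containing $X$ (with $t$ the identity), and you should add that sentence.
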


In order to prove \Cref{prop:contained_in_idempotent}, we need some preliminary results. 

\begin{lemma}
\label{lemma:diagonal_iso}
	Let $X, Y \subset \bbP^N$ be irreducible varieties and let $t, t' \in \mathbb{T} \subset PGL(N)$ be diagonal invertible matrices. Then, $t X \star t' Y = tt' (X \star Y)$.
\end{lemma}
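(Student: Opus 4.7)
The plan is to reduce the identity to a pointwise computation on the locus where the Hadamard product map $h\colon \bbP^N\times\bbP^N\dashrightarrow\bbP^N$ is defined, and then push it through the Zariski closure using that $\mathbb{T}\subset PGL(N)$ acts by automorphisms of $\bbP^N$.

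First I would observe the following pointwise equivariance. Given $t = \diag(t_0,\ldots,t_N)$ and $t' = \diag(t'_0,\ldots,t'_N)$ with all $t_i,t'_i\neq 0$, and given $p=(p_0:\cdots:p_N), q=(q_0:\cdots:q_N)\in\bbP^N$, a direct coordinate-wise computation gives
\[
(tp)\star(t'q) = (t_it'_i\,p_iq_i)_{i} = tt'\cdot(p\star q).
\]
Because $t_it'_i\neq 0$ for every $i$, the left-hand side has some non-zero coordinate if and only if the right-hand side does. Hence the domain of definition of $h$ on $X\times Y$ is carried bijectively onto the domain of definition of $h$ on $(tX)\times(t'Y)$ by $(p,q)\mapsto(tp,t'q)$, and on these domains the diagram commutes in the sense above.

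Next I would apply this to the defining expression for the Hadamard product. Denoting by $h(X\times Y)$ the image of the defined part of $X\times Y$ under $h$, the previous step yields the set-theoretic equality
\[
h\bigl((tX)\times(t'Y)\bigr) \;=\; tt'\cdot h(X\times Y).
\]
Now I would close the sets on both sides. Since $tt'\in\mathbb{T}\subset PGL(N)$ is a projective automorphism of $\bbP^N$, it is in particular a homeomorphism for the Zariski topology and therefore commutes with closure:
\[
\overline{tt'\cdot h(X\times Y)} \;=\; tt'\cdot\overline{h(X\times Y)} \;=\; tt'(X\star Y).
\]
Combining these two equalities with the definition $tX\star t'Y := \overline{h((tX)\times(t'Y))}$ gives the desired identity.

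There is no real obstacle: the argument is entirely bookkeeping once the coordinate identity $(tp)\star(t'q)=tt'(p\star q)$ is noted. The only thing worth checking carefully, and which I would make explicit in the write-up, is that diagonal invertibility ensures the indeterminacy loci of $h$ on $X\times Y$ and on $(tX)\times(t'Y)$ correspond, so that closing up does not introduce or lose components; this is what allows the closure to be pulled out of $tt'$ without loss.
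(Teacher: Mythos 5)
Your proof is correct, and it takes a genuinely different (and more elementary) route than the paper's. The paper only extracts the inclusion $tt'(X\star Y)\subset tX\star t'Y$ from the pointwise identity $(tp)\star(t'q)=tt'(p\star q)$, and then upgrades this to an equality by an irreducibility-plus-dimension argument: it invokes the Terracini-type lemma for Hadamard products from \cite[Lemma 2.12]{BCK17} to show that the tangent space of $tX\star t'Y$ at a general point is $tt'$ applied to the corresponding tangent space of $X\star Y$, so the two irreducible varieties have the same dimension and must coincide. You instead observe that, because $t$ and $t'$ are diagonal and invertible, the locus where the Hadamard product is defined on $X\times Y$ corresponds exactly to the one on $(tX)\times(t'Y)$, so the images \emph{before} closure are equal as sets, $h((tX)\times(t'Y))=tt'\cdot h(X\times Y)$; then the identity follows because $tt'\in PGL(N)$ is a Zariski homeomorphism and hence commutes with closure. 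Your argument buys simplicity and generality: it needs neither the irreducibility of $X$ and $Y$ nor smoothness of general points nor the Hadamard Terracini lemma, whereas the paper's argument is tied to those hypotheses. The one step worth making explicit in a write-up — which you do flag — is precisely the preservation of the definedness locus, since that is what turns the naive inclusion into an equality of images prior to taking closures.
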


\begin{proof}
    Since $t,t'\in \bbT$ are diagonal, then, for any $p,q\in\bbP^N$, we have $tt'(p\star q)=(tp)\star(t'q)$.
	This shows that $tt' (X \star Y) \subset t X \star t' Y$. Moreover, since $t$, $t'$ and $tt'$ induce isomorphisms of $\bbP^N$, we have that $tt' (X \star Y)$ and $tX \star t'Y$ are both irreducible. Hence, it is enough to prove that they have the same dimension. 
    
    Since $tt'$ is an isomorphism, $\dim tt' (X \star Y) = \dim X \star Y$. Now, for generic points $p \in X$ and $q \in Y$, $p$ is smooth for $X$, $q$ is smooth for $Y$ and $p\star q$ is smooth for $X \star Y$. Then, $tp$ is smooth for $tX$, $t'q$ is smooth for $t'Y$ and $(tp) \star (t'q) = tt' (p \star q)$ is smooth for $tX \star t'Y$. By Terracini's Lemma for Hadamard products of algebraic varieties (see \cite[Lemma 2.12]{BCK17}), we get
	\begin{align*}
		T_{(tp) \star (t'q)} t X \star t'Y
        &= \langle (tp) \star T_{t'q} t'Y, (t'q) \star T_{tp} tX \rangle\\
		&= \langle (tp) \star (t' T_q Y), (t'q) \star (t T_p X) \rangle\\
		&= tt' \langle p \star T_q Y, q \star T_p X \rangle = tt' T_{p \star q} X \star Y.
	\end{align*}
	Hence, $\dim  t X \star t'Y = \dim X \star Y = \dim tt'( X\star Y)$. This concludes the proof.
\end{proof}

\begin{lemma}
\label{lemma:containsall1}
	Let $X \subset \bbP^N$ be a proper irreducible variety such that $\mathds{1} \in X$. If the generic $X$-Hadamard-rank is infinite, then there exists $r \in \bbZ_{\geq 1}$ such that $X^{\star r}\subsetneq \bbP^N$ is an Hadamard-idempotent subvariety.
\end{lemma}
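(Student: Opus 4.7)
The plan is to exploit the hypothesis $\mathds{1}\in X$, which (as noted in the paragraph following Example 2.7) forces the ascending chain $X \subset X^{\star 2} \subset \cdots \subset X^{\star r} \subset \cdots \subset \bbP^N$ and the identification $\eta_m(X) = X^{\star m}$ for all $m$. Together with irreducibility, this should imply that the chain must stabilize at some term, and that this stabilizing term is automatically Hadamard-idempotent.

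First I would observe that, since $X$ is irreducible, so is every Hadamard power $X^{\star r}$ by the remark following Definition 2.5. Consequently, $\dim X^{\star r}$ is a non-decreasing sequence of integers bounded above by $N$, and therefore eventually constant. Choose the smallest $r \in \bbZ_{\geq 1}$ for which $\dim X^{\star r} = \dim X^{\star (r+1)}$. Since we have an inclusion $X^{\star r} \subset X^{\star (r+1)}$ of irreducible varieties of equal dimension, they coincide as varieties. An easy induction then shows $X^{\star (r+s)} = X^{\star r}$ for every $s\geq 0$: indeed, applying $(-)\star X$ to both sides of $X^{\star r} = X^{\star (r+1)}$ gives $X^{\star (r+1)} = X^{\star (r+2)}$, and so on. Iterating $r$ times, we obtain $X^{\star 2r} = X^{\star r}$; but $X^{\star 2r} = (X^{\star r})^{\star 2}$, so $X^{\star r}$ is Hadamard-idempotent.

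Finally, to verify $X^{\star r}\subsetneq \bbP^N$, I would use the contrapositive of the hypothesis. Since $\mathds{1} \in X$, we have $\eta_r(X) = X^{\star r}$, so if $X^{\star r} = \bbP^N$, then $\Hrk_X^\circ \leq r$, contradicting the assumed infiniteness of the generic $X$-Hadamard-rank. I do not foresee any substantive obstacle in this argument: it is a direct application of the standard stabilization of ascending chains of irreducible subvarieties in a fixed ambient space, combined with the two properties of Hadamard powers of $X$ that are already available once $\mathds{1}\in X$.
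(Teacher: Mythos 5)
Your argument is correct and follows essentially the same route as the paper: the paper's proof simply notes that the chain $X \subset X^{\star 2} \subset \cdots$ stabilizes (having already observed, right before the lemma, that irreducibility of the powers plus the bounded, non-decreasing dimensions force this), picks the stabilizing $r$, and identifies $X^{\star 2r} = (X^{\star r})^{\star 2}$ with $X^{\star r} \subsetneq \bbP^N$. You have merely spelled out the stabilization step in more detail.
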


\begin{proof}
    If the generic $X$-Hadamard-rank is infinite, then the chain in \eqref{eq:chain_inclusions} stabilizes, i.e., there exists $r \in \bbZ_{\geq 1}$ such that $X^{\star r} = X^{\star (r+1)} = \cdots = X^{\star (2r)} = (X^{\star r})^{\star 2} \neq \bbP^N$. Hence $X^{\star r} \subsetneq \bbP^N$ is Hadamard-idempotent.
\end{proof}

\begin{proof}[Proof of \Cref{prop:contained_in_idempotent}]
We distinguish two cases. 

\textit{Case 1.} If $X$ is not concise, then $X \subset H_i$ for some $i = 0, \ldots, N$, and $H_i^{\star 2} = H_i$.

\textit{Case 2.} Suppose that $X$ is concise. Then $X$ contains a point $\bfx = (x_0 : \cdots : x_N) \in X$ such that $\prod_{i=0}^N x_i \neq 0$. If $\bfx = \mathds{1}$, then we conclude by \Cref{lemma:containsall1}. If not, we take $t = \text{diag}(x_0^{-1}, \ldots, x_N^{-1})\in \mathbb{T}$ and then $\mathds{1} \in tX$. Note that the assumption $\Hrk_X^\circ = \infty$ implies that $\eta_r(X) \neq \bbP^N$ for all $r$: in particular, $\dim \eta_r(X) = \dim X^{\star r} < N$ for all $r$. Note that, by \Cref{lemma:diagonal_iso}, $\dim (tX)^{\star r} = \dim t^r X^{\star r} = \dim X^{\star r}$. In particular, we also have that $\Hrk_{tX}^\circ = \infty$. Therefore, $tX$ satisfies the assumptions of \Cref{lemma:containsall1} and this concludes the proof.
\end{proof}

\Cref{prop:contained_in_idempotent} underlines the importance of Hadamard-idempotent varieties. As already mentioned, embedded toric varieties and coordinate hyperplanes are Hadamard-idempotent. In \cite{bocci2022hadamard}, a classification of Hadamard-idempompotent hypersurfaces is given. Here, we employ tools from tropical geometry to extend such result to arbitrary codimension. 

\subsection{Hadamard-idempotent varieties and tropical geometry
}\label{ssec:Hadamard_idempotent}

	Tropicalization associates to every algebraic variety a \textit{polyhedral complex}, i.e., a family of polyhedra which is closed under taking faces and taking intersections. This combinatorial counterpart preserves many invariants. We recall some preliminary definitions and results in polyhedral and tropical geometry. For a general exposition, see \cite{sturmfelsmaclagan2015}. 

    \subsubsection{Structure Theorem of Tropical Varieties}

    In order to understand tropical varieties, one needs to understand first their building blocks, namely \textit{tropical hypersurfaces}. 
    These are given by the \textit{normal fan} to a \textit{Newton polytope}\footnote{Tropical varieties can be defined in a more general setting over fields with \textit{valuations}. Here, we use the trivial valuation.}. Let us recall these definitions. 
    For a more detailed description, see \cite[Chapter 2.3]{sturmfelsmaclagan2015} or \cite[Chapter 1-2]{ziegler2012lectures} for polyhedral geometry background.

    \begin{definition}
    \label{def:Newt-polytope}
        For a polynomial $f = \sum_{\alpha \in \bbZ_{\geq 0}^n} c_\alpha {\bf x}^{\alpha} \in \bbC[x_1, \ldots, x_n]$, the \textbf{Newton polytope} of $f$ is
        \[
            \Newt(f) = \text{conv}(\alpha \in \bbZ_{\geq 0}^n : c_{\alpha} \neq 0 ) \subset \bbR^n.
        \]
    \end{definition}
 
    \begin{example}
    \label{example:newtonpolytopes}
        Given bivariate polynomials $f = x + 2y - 5, \ g = x^3y^2 + 2x^4 + 3xy^3 + 2y - y^2 + 1 \in \bbC[x,y]$, the Newton polytope of $f$ is a triangle while the Newton polytope of $g$ is a pentagon. The Newton polytope of the 3-variate polynomial $h = x + 2y + z - 3 \in \bbC[x,y,z]$ is a simplex in $\bbR^3$.
    \end{example}

    In convex geometry, the dual object to a polytope is its \textit{normal fan}. This collection of cones remembers the information about the orthogonal space to the faces of the polytope. Let us recall some definitions. 

    \begin{definition}
		A \textbf{rational polyhedral cone} in $\mathbb{R}^n$ is the set of positive linear combinations of a finite set of vectors in $\mathbb{Z}^n$, i.e., a set of the form:
		\[
			\text{cone}(\bfv_1, \ldots, \bfv_k) = \{ \lambda_1 \bfv_1 + \cdots + \lambda_k \bfv_k \in \mathbb{R}^n : \lambda_1, \ldots, \lambda_k \geq 0\}, 
		\]
		for some $\bfv_1, \ldots, \bfv_k \in \mathbb{Z}^n$. A \textbf{face} of a rational polyhedral cone $C \subset \mathbb{R}^n$ is a rational polyhedral cone $C' \subset C$ such that if $\bfv,\bfw \in C$ satisfy $\bfv+\bfw\in C'$, then $\bfv,\bfw \in C'$. The dimension of a rational polyhedral cone $C$ is the dimension of its linear span $\langle C\rangle$. We will write:
		\[
			\dim C := \dim \langle C \rangle.
		\]
	\end{definition}

	\begin{definition}
		A \textbf{rational polyhedral fan} in $\mathbb{R}^n$ is a finite collection $\Sigma$ of rational polyhedral cones in $\mathbb{R}^n$ such that:
	\begin{itemize}
		\item For every rational polyhedral cone $C \in \Sigma$ and every face $C'$ of $C$, we have $C' \in \Sigma$;
		\item For every pair of rational polyhedral cones $C_1, C_2 \in \Sigma$ that intersect, $C_1 \cap C_2$ is a face of both $C_1$ and $C_2$.
	\end{itemize}
	The {\bf support} of $\Sigma$ is the set-theoretic union of all its cones $|\Sigma| = \bigcup_{C \in \Sigma} C \subset \mathbb{R}^n$. The \textbf{dimension} of $\Sigma$ is the maximum dimension of a cone $C \in \Sigma$. A rational polyhedral fan $\Sigma$ is \textbf{pure} of dimension $d$ if every maximal-by-inclusion face $C$ of $\Sigma$ has dimension $\dim C = d$. The {\bf $k$-th skeleton} of $\Sigma$ is the polyhedral fan $\Sigma^{\leq k}$ made by the cones $C \in \Sigma$ of dimension $\dim C \leq k$.
	\end{definition}

    An example of a rational polyhedral fan is given by the following construction. 
    \begin{definition}\label{def:normal_fan}
        Let $P = \text{conv}(\bfv_1, \ldots, \bfv_k)$ be a polytope, where $\bfv_1, \ldots, \bfv_k \in \bbZ^n$ and we assume for simplicity that $\bfv_1, \ldots, \bfv_k$ are the vertices of $P$. The (inner) \textbf{normal fan} to $P$ is the rational polyhedral fan $\calN(P)$ with rational polyhedral cones
        \[
            \calN_F(P) = \{ \bfu \in \bbR^n : \bfu \cdot (\bfv - \bfw) \leq 
            0 \text{ for all } \bfv \in \text{conv}(F), \ \bfw \in P\}
        \]
        for each face $\text{conv}(F) \subset P, \ \emptyset \neq F \subset \{\bfv_1, \ldots, \bfv_k\}$.
    \end{definition}

    \begin{example}
    \label{example:tropical_line}
        The normal fan of the Newton polytope of $f=x+2y-5$ in \Cref{example:newtonpolytopes} is a 2-dimensional pure polyhedral fan in $\mathbb{R}^2$ with three rays (i.e., 1-dimensional cones) $\sigma_1, \sigma_2, \sigma_3$ generated by $\bfe_1 = (1,0)$, $\bfe_2 = (0,1)$ and $\bfe_3 = (-1,-1)$ respectively, together with three 2-dimensional cones $C_1 = \text{cone}(\bfe_1,\bfe_2), \ C_2 = \text{cone}(\bfe_1,\bfe_3), \ C_3 = \text{cone}(\bfe_2,\bfe_3)$ and the zero-dimensional cone $\tau = \{(0,0)\}$. As we shall see, the support of its 1-skeleton (i.e., the union of the rays $\sigma_1, \sigma_2, \sigma_3$) is the tropicalization of a generic line $L \subset \bbC^2$ and it is called a {\em tropical line}. 
    \end{example}

    As in the previous example, normal fans of Newton polytopes provide tropical hypersurfaces, as we see in the following definition. Note that, as usually done in the literature, we address tropical geometry in the affine setting. This will allow us anyway to characterize projective varieties with infinite generic Hadamard rank. It will be done by looking at the tropicalization of their affine cones.

    \begin{notation}
        Let $f \in \bbC[x_1, \ldots, x_n]$ be a polynomial. We denote by $Z(f) =\{f=0\}\subset \bbC^n$ the corresponding affine hypersurface. 
    \end{notation}

    \begin{definition}
    \label{def:tropicalhypersurface} 
    Let $f \in \bbC[x_1, \ldots, x_n]$ be a polynomial. 
    The \textbf{tropical hypersurface} $\Trop Z(f) \subset \bbR^n$ is the support of the $(n-1)$-skeleton of the normal fan of the Newton polytope $\Newt(f)$. 
    \end{definition}

    For concise varieties, the previous definition agrees with the usual and more general definition of tropical hypersurfaces using initial ideals or tropical polynomials with respect to the trivial valuation. See \cite[Proposition~3.1.6]{sturmfelsmaclagan2015}.
    
    The general definition of tropicalization relies on tropical hypersurfaces.
    
    \begin{definition}
        Given a concise affine variety $X \subset \bbC^n$, we define its tropicalization as
        \[
            \Trop X = \bigcap_{f \in I(X)} \Trop Z(f) \subset \bbR^n
        \]
        where the intersection runs over all polynomials in the ideal of $X$.
    \end{definition}

    We are ready to state one of the main theorems in tropical geometry, whose main consequence is that tropicalization preserves dimensions.

    \begin{theorem}[Structure Theorem of tropical varieties \cite{sturmfelsmaclagan2015}]
        \label{thrm:structuretheorem}
		Let $X \subset \bbC^N$ be a concise irreducible variety. Then, $\operatorname{Trop} X \subset \mathbb{R}^N$ is the support of a pure rational balanced polyhedral fan of dimension $\dim X$.
	\end{theorem}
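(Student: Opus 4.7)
The plan is to endow $\Trop X$ with a canonical polyhedral fan structure coming from Gröbner theory and then verify the three required properties---pureness, dimension $\dim X$, and balancing---in that order. For each weight vector $w \in \bbR^N$, let $\mathrm{in}_w(I)$ denote the initial ideal of $I := I(X) \subset \bbC[x_1,\ldots,x_N]$ with respect to $w$, and declare $w \sim w'$ when $\mathrm{in}_w(I) = \mathrm{in}_{w'}(I)$. The closures of the equivalence classes assemble into the rational polyhedral \emph{Gröbner fan} of $I$. The key algebraic input, which I would invoke (the Fundamental Theorem of Tropical Geometry), is that $w \in \Trop X$ if and only if $\mathrm{in}_w(I)$ contains no monomial; granting this, $\Trop X$ is a finite union of Gröbner cones and so inherits a rational polyhedral fan structure $\Sigma$, which reduces the theorem to a statement about this particular fan.

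Next I would verify purity and dimension. Flatness of Gröbner degeneration yields $\dim \bbC[x]/\mathrm{in}_w(I) = \dim \bbC[x]/I = \dim X$ for every $w$, so every monomial-free initial ideal defines a subscheme of the expected dimension. The dimension of a Gröbner cone $\sigma$ equals that of the homogeneity space of $\mathrm{in}_w(I)$ for $w$ in its relative interior; a Krull-dimension count bounds this above by $\dim X$, with equality exactly when the initial ideal is monomial-free. Consequently every inclusion-maximal cone of $\Sigma$ has dimension $\dim X$, which is purity at the claimed dimension.

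The main obstacle is the balancing condition. I would assign to each maximal cone $\sigma$ the multiplicity $m(\sigma)$ equal to the sum of the lengths of the primary components of $\mathrm{in}_w(I)$ whose associated primes contain no variable, for any $w$ in the relative interior of $\sigma$, and then prove $\sum_{\sigma \supset \tau} m(\sigma)\, v_\sigma \in \langle \tau \rangle$ for every codimension-one face $\tau$ of $\Sigma$, where $v_\sigma$ denotes the primitive integer generator of $\sigma$ modulo $\tau$. The natural approach is to reduce to $\dim X = 1$ and $\tau = \{0\}$ by quotienting by the lineality of $\tau$ and intersecting with a generic transversal linear subspace whose tropicalization meets $\Trop X$ transversally; such a slice can be arranged so that the resulting one-dimensional variety has initial ideal whose monomial-free components are cut out by univariate binomials, and the balancing identity then becomes the combinatorial fact that the rays of the normal fan of the Newton polytope of each binomial, weighted by edge lengths, sum to zero. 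This final reduction is delicate because it requires controlling how multiplicities behave under the transversal slice and under Gröbner degeneration, which is typically handled by invoking the projection formula for tropical intersections or, equivalently, by a careful length computation in the associated graded ring.
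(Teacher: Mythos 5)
This is a cited background result rather than something the paper argues for: the authors import the Structure Theorem verbatim from Maclagan--Sturmfels and give no proof, so there is no in-paper argument to compare against. Your sketch follows the standard route of that reference (Gr\"obner fan structure via the Fundamental Theorem of Tropical Geometry, then purity and dimension, then balancing with multiplicities from primary decomposition), which is the right skeleton; but two of the three steps have genuine gaps as written.

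For purity and dimension, your upper bound is correct: for $w \in \Trop X$ the span of the Gr\"obner cone through $w$ is the homogeneity space of $\mathrm{in}_w(I)$, and since $\mathrm{in}_w(I)$ is monomial-free its zero locus meets the torus and is invariant under the corresponding subtorus, forcing that space to have dimension at most $\dim X$. But ``consequently every inclusion-maximal cone has dimension $\dim X$'' does not follow: an upper bound valid for all cones says nothing about whether the maximal ones attain it, and your argument would equally ``prove'' purity for a fan with a stray low-dimensional maximal cone. That the maximum is attained everywhere is precisely the Bieri--Groves theorem, which requires a separate induction (via generic coordinate projections, or via identifying the star of $w$ with $\Trop(\mathrm{in}_w(I))$ and recursing); relatedly, ``equality exactly when the initial ideal is monomial-free'' is false as stated, since monomial-freeness only certifies membership in $\Trop X$, not maximality of the cone. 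For balancing, the reduction to curves needs the compatibility of the Gr\"obner multiplicities with both the quotient by the lineality of $\tau$ and the transversal slice, and in the one-dimensional case the identity is not merely Newton-polytope duality for binomials: one must show the weighted primitive ray generators sum to zero, typically by computing the degree of the divisor of each coordinate character on a compactification of the curve. None of these steps is wrong in direction, but each is a theorem in its own right, so the proposal is an outline of the proof in the cited book rather than a proof. Since the paper uses the statement only as a black box, citing it --- as the authors do --- is the appropriate treatment.
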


    The combinatorial structure of tropical varieties will be our main tool for investigating Hadamard-idempotent varieties. We recall the necessary definitions.

    \begin{notation}
            Let $\Sigma$ be a rational polyhedral fan in $\bbR^n$. We denote by $\Sigma^k$ the set of $k$-dimensional faces of $\Sigma$.
        \end{notation}

	\begin{definition}
		Let $\Sigma$ be a rational polyhedral fan in $\mathbb{R}^n$ of dimension $d$ and $\tau \in \Sigma^{d-1}$. 
        Denote by $\pi_\tau\colon \mathbb{R}^n \twoheadrightarrow \mathbb{R}^n/\langle\tau\rangle$ the projection map.
        The \textbf{star} of $\tau$ in $\Sigma$ is the 1-dimensional rational polyhedral fan $\text{Star}_\Sigma(\tau)$ in $\mathbb{R}^n/\langle\tau\rangle$ whose cones are given by $\pi_\tau(\sigma)$ for every $\sigma \in \Sigma^d$ such that $\sigma \supset \tau$.
	\end{definition}

	\begin{example}
		Consider the tropical line from \Cref{example:tropical_line} as a 1-dimensional fan. We denote it by $\Sigma$. The unique face of codimension 1 is the origin $\tau = \{(0,0)\}$, hence $\pi_\tau\colon \mathbb{R}^2 \twoheadrightarrow \mathbb{R}^2/\langle\tau\rangle \simeq \mathbb{R}^2$ is the identity map and every maximal-dimension cone contains $\tau$, hence $\text{Star}_\Sigma(\tau) = \Sigma$.
	\end{example}

	\begin{definition}
		A \textbf{weighted rational polyhedral fan} of dimension $d$ is a pair $(\Sigma, w)$ where $\Sigma$ is a rational polyhedral fan of dimension $d$ and $w\colon \Sigma^d \to \mathbb{Z}_{>0}$ is a map, called the weight on $\Sigma$.
	\end{definition}

	\begin{example}
		Let $\Sigma$ be a weighted rational polyhedral fan of dimension $d$ in $\mathbb{R}^n$ with weight $w\colon \Sigma^d \to \mathbb{Z}_{>0}$. For every codimension-$1$ face $\tau \in \Sigma^{d-1}$, $\text{Star}_\Sigma(\tau)$ is a 1-dimensional fan in $\mathbb{R}^n/\langle\tau\rangle$. 
        It inherits a weight $w_\tau\colon \text{Star}_\Sigma(\tau)^1 \to \mathbb{Z}_{>0}$ from $\Sigma$ in a canonical way with $w_\tau(\pi_\tau(\sigma)) = w(\sigma)$ for every $\sigma \in \Sigma^d$ such that $\sigma \supset \tau$.
	\end{example}

	\begin{definition}
		Let $\Sigma$ be a weighted 1-dimensional polyhedral fan in $\mathbb{R}^n$ with weight $w\colon \Sigma^1 \to \mathbb{Z}_{>0}$. For each ray $\sigma \in \Sigma^1$, let $\bfe(\sigma) \in \mathbb{Z}^n$ be the first lattice point on $\sigma$. We say that $\Sigma$ is {\bf balanced} if:
		\[
			\sum_{\sigma \in \Sigma} w(\sigma) \bfe(\sigma) = 0 . 
		\]
		For a general weighted rational polyhedral fan $\Sigma$ of dimension $d$ in $\mathbb{R}^n$ with weight $w\colon \Sigma^d \to \mathbb{Z}_{>0}$, we say that $\Sigma$ is balanced if, for every codimension 1 face $\tau \in \Sigma^{d-1}$, the 1-dimensional fan $\text{Star}_\Sigma(\tau)$ is balanced. Alternatively, for every codimension 1 face $\tau \in \Sigma^{d-1}$, let $\bfe_\tau(\sigma)$ be the first lattice point of the 1-dimensional cone $\pi_\tau(\sigma)$ for every $\sigma \in \Sigma^d$ such that $\sigma \supset \tau$. Then, $\text{Star}_\Sigma(\tau)$ is balanced if:
		\[
			\sum_{\substack{\sigma \supset \tau \\ \sigma \in \Sigma^d}} w(\sigma) \bfe_\tau(\sigma) = 0
		\]
		and $\Sigma$ is balanced if every $\text{Star}_\Sigma(\tau)$ is balanced for every $\tau \in \Sigma^{d-1}$.
	\end{definition}

	\begin{example}
		The tropical line from \Cref{example:tropical_line} is balanced with the weight $w(\sigma_i) = 1$ for all $i = 1, 2, 3$. This is a consequence of the \nameref{thrm:structuretheorem}.
	\end{example}

    The \nameref{thrm:structuretheorem} and, in particular, the balancing condition will be important in the study of Hadamard-idempotent varieties. The other main tool is the following theorem about the tropicalization of monomial maps \cite[Corollary 3.2.13]{sturmfelsmaclagan2015}.

    \begin{theorem}
    \label{thrm:monomialmaps}
        Let $\alpha\colon \bbC^n \dashrightarrow \bbC^m$ be a monomial map, i.e., $\alpha({\bf x}) = ({\bf x}^{\alpha_1}, \ldots, {\bf x}^{\alpha_m})$ where $\alpha_1, \ldots, \alpha_m \in \bbZ^n$. Let $X \subset \bbC^n$ be a concise variety and $A \in \bbZ^{m \times n}$ be the matrix whose columns are $\alpha_1, \ldots, \alpha_m$. Then
        \[
            \Trop \overline{\alpha(X)} = A^T (\Trop X).
        \]
    \end{theorem}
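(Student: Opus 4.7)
The plan is to invoke the Fundamental Theorem of Tropical Algebraic Geometry due to Kapranov and Payne. I would work over an algebraically closed non-archimedean valued field $K$ extending $\bbC$ (for instance, the field of Puiseux series) with valuation $\mathrm{val}\colon K^\times \to \bbR$. Kapranov's theorem provides
\[
    \Trop X = \overline{\{ \mathrm{val}(x) \,:\, x \in X(K) \cap (K^\times)^n \}} \subset \bbR^n,
\]
with closure taken in the Euclidean topology, and analogously for $\Trop \overline{\alpha(X)} \subset \bbR^m$. The argument then splits into the two inclusions.

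For the inclusion $A^T(\Trop X) \subset \Trop \overline{\alpha(X)}$, I would observe that for any $x \in X(K) \cap (K^\times)^n$, the point $\alpha(x)$ lies in $\overline{\alpha(X)}(K) \cap (K^\times)^m$ and its $i$-th coordinate has valuation $\mathrm{val}(x^{\alpha_i}) = \alpha_i \cdot \mathrm{val}(x)$. Thus $\mathrm{val}(\alpha(x)) = A^T \mathrm{val}(x)$. Since $A^T$ is continuous and $\Trop \overline{\alpha(X)}$ is closed, taking Euclidean closures yields the desired inclusion.

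For the reverse inclusion, the key point is that $\alpha(X \cap (\bbC^\times)^n)$ is a constructible subset of $\overline{\alpha(X)}$ whose Zariski closure equals $\overline{\alpha(X)}$, and the tropicalization of a constructible set coincides with that of its Zariski closure. Hence every $w \in \Trop \overline{\alpha(X)}$ arises as a Euclidean limit of valuations of points of the form $\alpha(x)$ with $x \in X(K) \cap (K^\times)^n$, that is, a limit of points of $A^T(\Trop X)$. Finally, by \Cref{thrm:structuretheorem}, $\Trop X$ is a rational polyhedral fan; its image under the rational linear map $A^T$ is a finite union of rational polyhedral cones, hence already closed. This forces $w \in A^T(\Trop X)$, completing the equality.

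The main obstacle is justifying the equality between the tropicalization of $\alpha(X(K) \cap (K^\times)^n)$, viewed as a constructible set, and that of its Zariski closure $\overline{\alpha(X)}$. Making this rigorous requires either an analysis of initial ideals under the monomial morphism or an explicit Puiseux-series lift of $K$-points of $\overline{\alpha(X)}\cap(K^\times)^m$ to the image of $\alpha$ on $X(K) \cap (K^\times)^n$; both approaches are standard and are treated in detail in Chapter 3 of Maclagan--Sturmfels.
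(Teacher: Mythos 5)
The paper does not prove this statement at all: it is quoted verbatim as \cite[Corollary 3.2.13]{sturmfelsmaclagan2015}, so there is no internal proof to compare against. Your sketch is, in substance, the standard proof of that corollary: the forward inclusion via $\mathrm{val}(\alpha(x)) = A^T\mathrm{val}(x)$ together with closedness of $A^T(\Trop X)$, and the reverse inclusion via the Fundamental Theorem plus the fact that tropicalization does not see the difference between the constructible image $\alpha(X\cap(\bbC^\times)^n)$ and its Zariski closure. The step you flag as the main obstacle is indeed the crux, and the resolution is the one you indicate: $\overline{\alpha(X)}\smallsetminus\alpha(X)$ sits inside a closed subset of strictly smaller dimension, whose tropicalization is a lower-dimensional fan, while $\Trop\overline{\alpha(X)}$ is pure of dimension $\dim\overline{\alpha(X)}$ by the Structure Theorem, so valuations of points of the image are still dense. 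Two small caveats: the Structure Theorem as stated in the paper (\Cref{thrm:structuretheorem}) assumes irreducibility, whereas the statement here only assumes $X$ concise, so you should run the argument component by component (the tropicalization of a reducible variety is the union of the tropicalizations of its components, each a pure balanced fan, and the image of each under $A^T$ is still a finite union of rational cones, hence closed); and you should say explicitly that the constant-coefficient tropicalization of $X$ over $\bbC$ agrees with the tropicalization of the base change $X_K$ over the Puiseux field, which is what licenses invoking Kapranov/the Fundamental Theorem in the first place. With those points made explicit, the argument is complete.
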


    An important consequence regards tropicalizations of Hadamard products of affine varieties $X, Y \subset \bbC^N$.

    \begin{corollary}
    \label{thm:trophadamard}
        Let $X, Y \subset \bbC^N$ be concise affine varieties. Then
        \[
            \Trop X \star Y = \Trop X + \Trop Y , 
        \]
        where the sum on the right denotes the Minkowski sum.
    \end{corollary}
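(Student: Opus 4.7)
The plan is to realize the Hadamard product $X \star Y$ as the image of $X \times Y$ under a monomial map, and then apply \Cref{thrm:monomialmaps} directly. First I would define $h \colon \bbC^{2N} \to \bbC^N$ by $h(\bfx, \bfy) = (x_1 y_1, \ldots, x_N y_N)$; by \Cref{def:Hadarmard-product}, $X \star Y = \overline{h(X \times Y)}$. The map $h$ is a monomial map: its $i$th component has exponent vector $\alpha_i = \bfe_i + \bfe_{N+i} \in \bbZ^{2N}$, where the second occurrence of $I_N$ corresponds to the $\bfy$-variables. The associated matrix $A$ whose columns are $\alpha_1, \ldots, \alpha_N$ is the $2N \times N$ block matrix $\left(\begin{smallmatrix} I_N \\ I_N \end{smallmatrix}\right)$, and hence $A^T \colon \bbR^{2N} \to \bbR^N$ is the linear map $(\bfu, \bfv) \mapsto \bfu + \bfv$ for $\bfu, \bfv \in \bbR^N$.

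The next step is to apply \Cref{thrm:monomialmaps} to $h$ and the variety $X \times Y \subset \bbC^{2N}$, yielding $\Trop(X \star Y) = A^T\,\Trop(X \times Y)$. Invoking the theorem requires that $X \times Y$ be concise in $\bbC^{2N}$; this is inherited from the conciseness of $X$ and $Y$ individually, since a coordinate hyperplane of $\bbC^{2N}$ involves either an $x$- or a $y$-coordinate and hence pulls back to a coordinate hyperplane constraint on one factor. To finish, I would invoke the standard compatibility of tropicalization with Cartesian products, namely $\Trop(X \times Y) = \Trop X \times \Trop Y \subset \bbR^{2N}$. This holds because $I(X \times Y)$ is generated by polynomials from $I(X)$ and $I(Y)$ in two disjoint sets of variables, so the Newton polytopes of a generating set lie in complementary coordinate subspaces and the tropical hypersurfaces split as products.

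Combining these three ingredients gives
\[
\Trop(X \star Y) \;=\; A^T\bigl(\Trop X \times \Trop Y\bigr) \;=\; \{\bfu + \bfv : \bfu \in \Trop X,\ \bfv \in \Trop Y\} \;=\; \Trop X + \Trop Y,
\]
which is the Minkowski sum as required.

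I do not anticipate a serious obstacle: the corollary is essentially the superposition of two standard facts in tropical geometry, namely \Cref{thrm:monomialmaps} and the product formula for tropicalization. The only minor care points are checking that the conciseness hypothesis transfers to $X \times Y$ (immediate) and writing the matrix $A$ explicitly enough to identify $A^T$ with the Minkowski-sum operation (a direct bookkeeping).
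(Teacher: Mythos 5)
Your proof is correct and follows exactly the route the paper intends: the corollary is stated immediately after \Cref{thrm:monomialmaps} precisely because the Hadamard map is the monomial map with matrix $\left(\begin{smallmatrix} I_N \\ I_N \end{smallmatrix}\right)$, whose transpose realizes the Minkowski sum, combined with $\Trop(X\times Y)=\Trop X\times\Trop Y$ and the (immediate) conciseness of $X\times Y$. No issues to flag.
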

    
    \subsubsection{Hadamard-idempotent varieties and linear tropical varieties}

    The starting point of our investigation is the following remark.

    \begin{remark}
        Toric varieties are Hadamard-idempotent. At the same time, consider a monomial embedding $\widehat{\varphi}_A(\bfx) =(\bfx^{\alpha_0},\ldots,\bfx^{\alpha_N})$ as in \Cref{def:toricvarieties}, whose Zariski closure gives the affine cone of a toric variety $X \subset \bbP^N$. The tropicalization of this embedding gives a parametrization $\bfx \mapsto (\alpha_0 \cdot \bfx, \ldots,\alpha_N \cdot \bfx) = A^T \bfx$ of the tropicalization of the affine cone of $X$, by \Cref{thrm:monomialmaps}. Hence, $\Trop X$ is a linear space.
    \end{remark}

	 Building on the previous remark, it is natural to ask if it is always true that the tropicalization of a Hadamard-idempotent variety is \textit{always} a linear space. In the following, we give a positive answer to this question. As in the previous section, we develop our tools in the affine case. We explain how to pass to projective varieties before stating the main result of this section where we classify concise irreducible projective varieties whose Hadamard powers do not fill the ambient space (\Cref{cor:finite_HadamardRank_condition}).
     
	\begin{lemma}
	\label{lemma:notbalanced}
		Let $\Sigma$ be a polyhedral fan in $\mathbb{R}^n$ whose support is a cone but not a linear space. Then, $\Sigma$ is not balanced with respect to any weight $w\colon \Sigma^d \to \mathbb{Z}_{>0}$.
	\end{lemma}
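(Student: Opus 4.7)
The plan is to argue by contradiction: assume $\Sigma$ is balanced with some weight $w\colon \Sigma^d \to \bbZ_{>0}$, and then exhibit a codimension-$1$ face $\tau \in \Sigma^{d-1}$ at which the balancing identity fails because every top-dimensional cone through $\tau$ lies on the positive side of a supporting hyperplane of $|\Sigma|$.

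First I would nail down the dimensions. Since $|\Sigma|$ is a finite union of $d$-dimensional cones it has topological dimension $d$, and since a convex set has non-empty relative interior in its affine hull, $V := \operatorname{span}|\Sigma|$ must satisfy $\dim V = d$, making $|\Sigma|$ a polyhedral convex cone in $V$ (a finite union of polyhedral cones that is convex is itself polyhedral, as it equals its convex hull, which is a Minkowski sum of polyhedral cones). Since $|\Sigma| \neq V$, this polyhedral cone admits a facet $F$ of dimension $d-1$, i.e.\ there is a linear functional $\ell\colon \bbR^n \to \bbR$ with $\ell|_{|\Sigma|}\ge 0$, $F = |\Sigma|\cap\{\ell=0\}$, and $\ell$ strictly positive on $|\Sigma|\setminus F$.

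The key geometric observation is that $F$ is a union of cones of $\Sigma$: for any $v\in F$, the unique cone $\sigma_v\in \Sigma$ containing $v$ in its relative interior has $\ell|_{\sigma_v}\ge 0$ and $\ell(v) = 0$, so linearity forces $\ell\equiv 0$ on $\sigma_v$, whence $\sigma_v \subset F$. As $F$ is $(d-1)$-dimensional and is covered by cones of $\Sigma$ of dimension at most $d-1$, at least one $\tau \in \Sigma^{d-1}$ contained in $F$ has $\dim\tau = d-1$, so $\langle\tau\rangle = \langle F\rangle \subset \{\ell = 0\}$ and $\ell$ descends to a well-defined linear functional $\bar\ell$ on $\bbR^n/\langle\tau\rangle$, which is nonzero on $V/\langle\tau\rangle$ because $\ell$ is not identically zero on $V$.

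The finishing step is applying $\bar\ell$ to the balancing identity at $\tau$. For every $\sigma \in \Sigma^d$ with $\sigma \supset \tau$, we have $\sigma \subset V$ with $\dim\sigma = d > d-1 = \dim\langle F\rangle$, so $\sigma \not\subset \{\ell=0\}$; combined with $\ell \ge 0$ on $\sigma$, this forces $\bar\ell(\bfe_\tau(\sigma)) > 0$. Therefore
\[
0 \;=\; \bar\ell\!\left(\sum_{\sigma \supset \tau,\ \sigma \in \Sigma^d} w(\sigma)\,\bfe_\tau(\sigma)\right) \;=\; \sum_{\sigma \supset \tau,\ \sigma \in \Sigma^d} w(\sigma)\,\bar\ell(\bfe_\tau(\sigma)),
\]
a sum of strictly positive terms --- contradiction. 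The only genuine obstacle is the dimension bookkeeping (confirming $\dim V = d$ and extracting a full-dimensional $\tau \in \Sigma^{d-1}$ inside the facet $F$); once that is handled, the supporting-hyperplane trick reduces the claim to the elementary one-dimensional fact that positively weighted rays on the non-negative side of a hyperplane cannot sum to zero unless they all lie in the hyperplane itself.
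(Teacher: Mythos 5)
Your proof is correct, and it takes a genuinely different (and more robust) route than the paper's, although both rest on the same underlying idea: if $|\Sigma|$ is a cone but not a linear space, one can find a codimension-one cone $\tau$ whose star is entirely ``one-sided,'' and a one-sided star with strictly positive weights cannot be balanced. The paper implements this by writing $|\Sigma|=\operatorname{cone}(\bfv_1,\ldots,\bfv_d)$ with the $\bfv_i$ linearly independent after quotienting by the lineality space, and taking $\tau=\operatorname{cone}(\bfv_1,\ldots,\bfv_{d-1})$, so that $\operatorname{Star}_\Sigma(\tau)$ is a single ray; this is quick but implicitly assumes the support is simplicial modulo lineality and that this particular $\tau$ is actually a cone of the fan $\Sigma$. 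Your supporting-hyperplane argument needs neither assumption: you show $|\Sigma|$ is a $d$-dimensional polyhedral convex cone, extract a genuine facet $F=|\Sigma|\cap\{\ell=0\}$, observe (via the relative-interior argument) that $F$ is a union of cones of $\Sigma$ and hence contains some $\tau\in\Sigma^{d-1}$, and then annihilate the balancing identity at $\tau$ by pairing with $\bar\ell$, which is strictly positive on every ray of $\operatorname{Star}_\Sigma(\tau)$. The one point you should make explicit is that $\operatorname{Star}_\Sigma(\tau)$ is non-empty; otherwise the balancing condition at $\tau$ is vacuously satisfied and no contradiction arises. This does hold in your setting: a relative interior point $v$ of $\tau$ has a $d$-dimensional neighbourhood inside the convex cone $|\Sigma|$, that neighbourhood is covered by the finitely many (closed) cones of $\Sigma$ containing $v$, so some $\sigma\in\Sigma^d$ contains $v$, and since $\sigma$ meets the relative interior of $\tau$, the fan axioms force $\tau$ to be a face of $\sigma$. (In the paper's application this is automatic, since the Structure Theorem makes $\Sigma$ pure of dimension $d$.) With that sentence added, your argument is complete and covers cases the paper's sketch does not.
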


	\begin{proof}
		Suppose that $\sigma = |\Sigma| = \text{cone}(\bfv_1, \ldots, \bfv_d)$. Up to quotienting by the lineality space of $\sigma$, i.e., the largest 
        linear space contained in $\sigma$, we can suppose that $\sigma \neq \{0\}$ and that $\bfv_1, \ldots, \bfv_d$ are linearly independent. If $d = 1$, then $\Sigma$ is clearly not balanced. If $d \geq 2$, let $\tau = \text{cone}(\bfv_1, \ldots, \bfv_{d-1})$. Then, $\tau$ is a codimension-1 cone of $\Sigma$ and $\text{Star}_\Sigma(\tau)$ is not balanced. In fact, since $\bfv_d \not \in \tau$, we have that
        $\pi_\tau(\bfv_d) \neq \bf0$ and $\text{Star}_\Sigma(\tau)^1 = \{ \text{cone}(\pi_\tau(\bfv_d))\}$. A single ray is not balanced, hence $\text{Star}_\Sigma(\tau)$ is not balanced.
	\end{proof}

	\begin{remark}
		In the proof of \Cref{lemma:notbalanced}, we use the hypothesis that $\sigma = |\Sigma|$ is a cone but not a linear space. In fact, if $\sigma$ is a linear space, then it is equal to its lineality space, and then by quotienting we obtain $\{\bf0\}$ and we cannot continue with the proof. A way of writing a linear space as a cone is by taking a basis $\bfv_1, \ldots, \bfv_d$ and writing it as $\text{cone}(\pm \bfv_1, \ldots, \pm \bfv_d)$. But then, the set $\{ \pm \bfv_1, \ldots, \pm \bfv_d \}$ is not linearly independent. Any other generating set (as a cone) will give rise to this problem.
	\end{remark}

	\begin{theorem}
	\label{thrm:tropXlinear}
		Let $X \subset \bbC^N$ be a concise irreducible affine variety such that $X^{\star 2} = X$. Then, $\Trop X$ is a linear space. 
	\end{theorem}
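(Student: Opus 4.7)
The plan is to combine the Hadamard-product tropicalization identity from \Cref{thm:trophadamard} with the balancing condition from the \nameref{thrm:structuretheorem} and the rigidity statement of \Cref{lemma:notbalanced}.

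First, I would translate the hypothesis $X^{\star 2} = X$ through \Cref{thm:trophadamard} to obtain the set-theoretic equality
\[
    \Trop X \;=\; \Trop X + \Trop X \;\subset\; \bbR^N,
\]
where the right-hand side denotes the Minkowski sum. The support of any polyhedral fan is automatically closed under multiplication by nonnegative scalars, since each of its points lies in some cone of the fan, which is itself closed under positive scaling. Combining this with closure under Minkowski sum, the displayed identity forces $|\Trop X|$ to be a convex cone in $\bbR^N$ (in the single-cone sense, not merely as a union of cones of a fan).

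Next I would invoke the \nameref{thrm:structuretheorem}: since $X$ is concise and irreducible, $\Trop X$ is the support of a pure balanced rational polyhedral fan of dimension $\dim X$, for some positive-integer weighting on its top-dimensional cones.

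Finally I would apply \Cref{lemma:notbalanced}, which rules out any balanced weighting on a polyhedral fan whose support is a convex cone but not a linear subspace. Since the first step shows that $|\Trop X|$ is indeed a convex cone, the balancing provided by the Structure Theorem can hold only if $|\Trop X|$ is a linear subspace, which is exactly the conclusion. I do not expect a serious obstacle here: the only point requiring slight care is that the scheme-theoretic equality $X^{\star 2} = X$ must be transported into a genuine set-theoretic equality of supports in $\bbR^N$, and this is precisely what \Cref{thm:trophadamard} supplies when applied to the pair $(X, X)$.
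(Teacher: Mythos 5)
Your proposal is correct and follows essentially the same route as the paper's proof: tropicalize the idempotence via \Cref{thm:trophadamard}, deduce that $\Trop X$ is a cone, and then use the balancing condition from the \nameref{thrm:structuretheorem} together with \Cref{lemma:notbalanced} to force linearity. The only cosmetic difference is that the paper establishes the cone property by explicitly exhibiting generators (the rays of the fan) and using the iterated identity $k\cdot\Trop X=\Trop X$, whereas you obtain it directly from closure under scaling and a single Minkowski sum; both arguments are sound.
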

	\begin{proof}
		By \Cref{thm:trophadamard} and the Hadamard-idempotent assumption, we know that $\Trop X + \Trop X = \Trop X^{\star 2} = \Trop X$. By induction, for any $k\geq 2$:
		\begin{equation}
		\label{eqn:ktrop}
			k \cdot \Trop X = \underbrace{\Trop X + \cdots + \Trop X}_{k \text{ times}} = \Trop X.			
		\end{equation}
		By the \nameref{thrm:structuretheorem}, $\Trop X$ is (the support of) a pure, balanced rational polyhedral fan $\Sigma$ of dimension $\dim X$. Let $\bfv_1, \ldots, \bfv_k \in \mathbb{Z}^N$ be the collection of all rays that appear among a generating set of a cone in $\Sigma$. We shall prove that $\Trop X = \text{cone}(\bfv_1, \ldots, \bfv_k)$. Take $\lambda_1, \ldots, \lambda_k \geq 0$. Since $\Trop X$ is the support of a polyhedral fan, $\lambda_i \bfv_i \in \Trop X$ for any $i \in \{1, \ldots, k\}$. Thus, by \Cref{eqn:ktrop}, we have that:
		\[
			\underbrace{\lambda_1 \bfv_1}_{\in \Trop X} + \cdots + \underbrace{\lambda_k \bfv_k}_{\in \Trop X} \in k\cdot \Trop X = \Trop X,
		\]
		from which we obtain that $\text{cone}(\bfv_1, \ldots, \bfv_k) \subset \Trop X$. Moreover, $\Trop X \subset \text{cone}(\bfv_1, \ldots, \bfv_k)$ always holds true. In fact, since $\Trop X$ is the support of $\Sigma$, for every $\bfx \in \Trop X$, there exists a cone $C \in \Sigma$ such that $\bfx \in C$. By definition of the vectors $\bfv_1, \ldots, \bfv_k \in \mathbb{Z}^n$, there exist $1\leq i_1, \ldots, i_m \leq k$ for which $C = \text{cone}(\bfv_{i_1}, \ldots, \bfv_{i_m})$. This means that $\bfx \in C \subset \text{cone}(\bfv_1, \ldots, \bfv_k)$. In the end, we obtain that $\Trop X$ is a cone. Since $\Sigma$ is balanced, by \Cref{lemma:notbalanced}, $\Trop X$ (i.e., the support of $\Sigma$) has to be a linear space.
	\end{proof}

    \begin{notation}
        In what follows, by a binomial we mean a polynomial of the form $a \bfx^\alpha + b \bfx^\beta$ for some $a, b \in \bbC$, $a,b \neq 0$ and $\alpha, \beta \in \bbZ_{\geq 0}^N$.
    \end{notation}

    \begin{proposition}
	\label{prop:alignedsupport}
		Let $Z(f) \subset \bbC^N$ be a concise irreducible hypersurface. Then, $\Trop Z(f)$ is a linear space if and only if $f$ is a binomial.
	\end{proposition}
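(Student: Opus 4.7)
The plan is to read off the structure of $f$ from the normal fan $\calN(\Newt(f))$, using that by \Cref{def:tropicalhypersurface} the support of its $(N-1)$-skeleton equals $\Trop Z(f)$; in particular the complement in $\bbR^N$ is the disjoint union of the relative interiors of the $N$-dimensional cones of $\calN(\Newt(f))$, which are in bijection with the vertices of $\Newt(f)$. The ``if'' direction is immediate: for $f = a\bfx^\alpha + b\bfx^\beta$ with $\alpha\neq\beta$, $\Newt(f)$ is the segment $[\alpha,\beta]$, whose normal fan subdivides $\bbR^N$ into the two closed half-spaces separated by $(\alpha-\beta)^{\perp}$, so $\Trop Z(f) = (\alpha-\beta)^\perp$ is linear.

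For the converse, suppose $L := \Trop Z(f)$ is a linear subspace. Since $Z(f)$ is concise and irreducible, $f$ cannot be a monomial (the zero set of a nontrivial monomial is either a single coordinate hyperplane, excluded by conciseness, or a union of several, excluded by irreducibility), so $\Newt(f)$ has dimension at least one and $\calN(\Newt(f))$ has at least two maximal cones. Consequently $\bbR^N \setminus L$ has at least two connected components, which forces $\dim L = N-1$; so $L$ is a hyperplane, $\bbR^N \setminus L$ has exactly two open half-space components, and $\Newt(f)$ has exactly two vertices. Thus $\Newt(f)$ is a segment $[\alpha,\beta]$ with $\alpha-\beta$ normal to $L$.

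The final step is algebraic. All monomials of $f$ lie on the lattice segment $[\alpha,\beta]$, so writing $\gamma'=(\beta-\alpha)/g$ for the primitive lattice vector along it (where $g$ is the gcd of the coordinates of $\beta-\alpha$), I factor $f = \bfx^\alpha\, q(\bfx^{\gamma'})$ for a univariate $q \in \bbC[t]$. Splitting $q$ into linear factors over $\bbC$ presents $f$ as a product of the monomial $\bfx^\alpha$ and the irreducible binomials $\bfx^{\gamma'}-r_j$. Conciseness forces $\alpha=0$ (otherwise some coordinate hyperplane $\{x_i=0\}$ sits inside the irreducible $(N-1)$-dimensional variety $Z(f)$, forcing $Z(f)=\{x_i=0\}$ and contradicting conciseness), while irreducibility of $Z(f)=\bigcup_j Z(\bfx^{\gamma'}-r_j)$ forces all the $r_j$ to coincide. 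Hence $Z(f)$ is cut out by a single binomial $\bfx^{\gamma'}-r$, as required.

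The main obstacle is precisely this last algebraic step: translating ``$\Newt(f)$ is a segment'' into ``$f$ is a binomial'' genuinely needs both hypotheses, as $f=1+t+t^2$ (for which $\Trop Z(f)=\{0\}$ is linear but $Z(f)$ is not irreducible) already shows.
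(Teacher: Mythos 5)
Your ``if'' direction and your reduction to ``$\Newt(f)$ is a segment'' are both correct, and the component-counting argument via the maximal cones of the normal fan is in fact a more detailed justification of that reduction than the paper gives. The gap is in the last algebraic step: the claim that conciseness forces $\alpha=\mathbf{0}$ is false. Take $f=x_1+x_2$ in $\bbC^2$, or $f=x_1x_3+x_2^2$ in $\bbC^3$: these are concise and irreducible, their Newton polytopes are segments, and \emph{neither} vertex is the origin. Your parenthetical justification fails because $\alpha_i>0$ does not imply $x_i\mid f$: the exponents of $f$ are $\alpha+j\gamma'$, and when $\gamma'_i<0$ the $i$th exponent decreases along the segment, so $x_i\mid f$ only when $\min(\alpha_i,\beta_i)>0$. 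What conciseness and irreducibility actually yield is that the coordinatewise minimum of the two vertices is $\mathbf{0}$, not that one of the vertices is.

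This matters downstream. When $\alpha\neq\mathbf{0}$ the vector $\gamma'$ has negative entries, so $f=\bfx^{\alpha}\prod_j(\bfx^{\gamma'}-r_j)$ is only a factorization of Laurent polynomials; the factors $\bfx^{\gamma'}-r_j$ are not polynomials, you cannot read components of $Z(f)\subset\bbC^N$ off this expression directly, and the object you end with, $\bfx^{\gamma'}-r$, need not be a binomial in $\bbC[x_1,\ldots,x_N]$ at all. Moreover, ``$Z(f)$ is cut out by a single binomial'' is weaker than the assertion ``$f$ is a binomial'': with all the $r_j$ equal you only get $f=c\,\bfx^{\alpha}(\bfx^{\gamma'}-r)^{g}$, and you still need irreducibility of the polynomial $f$ to kill both the multiplicity $g$ and the monomial factor. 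The repair is exactly the paper's Case (ii): one first shows $\alpha=g\,(\gamma')^{-}$, where $(\gamma')^{-}$ is the negative part of $\gamma'$, clears denominators to obtain the genuine polynomial factorization $f=c\prod_{j}\bigl(\bfx^{\gamma'+(\gamma')^{-}}-r_j\,\bfx^{(\gamma')^{-}}\bigr)$, and only then invokes irreducibility of $f$ to force a single (binomial) factor.
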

	\begin{proof}
		  If $f = a \bfx^{\alpha} + b \bfx^{\beta}$ is a binomial, then $\Newt(f)$ (\Cref{def:Newt-polytope}) is the line segment with 
          vertices $\alpha,\beta \in \bbZ_{\geq 0}^N$ and (the support of) its normal fan (\Cref{def:normal_fan}) is the tropical hypersurface $\Trop Z(f) = \{ \mathbf{x} \in \bbR^N : (\alpha-\beta) \cdot \mathbf{x} = 0 \}$, which is an hyperplane.
          
          Assume that $\Trop Z(f)$ is a linear space. Let $\text{Newt}(f) \cap \bbZ_{\geq 0}^N = \{\alpha_0, \ldots, \alpha_m\}$ be the integer points of $\text{Newt}(f)$; that is, all the possible exponents of monomials that can appear in $f$, i.e.,
		\[
			f(\mathbf{x}) = c_0 \mathbf{x}^{\alpha_0}+ c_1 \mathbf{x}^{\alpha_1} + \cdots + c_m \mathbf{x}^{\alpha_m} \in  \bbC[\mathbf{x}]
		\]
		for some $c_0, \ldots, c_m \in \bbC$. By the definition of tropical hypersurface (\Cref{def:tropicalhypersurface}), we know that $\Trop Z(f)$ is (the support of) the $(n-1)$-skeleton of the normal fan of the Newton polytope $\text{Newt}(f)$. Hence, from the assumptions, $\text{Newt}(f)$ is a line segment. In particular, we can assume that $\alpha_0$ and $\alpha_m$ are the vertices of $\text{Newt}(f)$ and that $\alpha_1$ is the first lattice point after $\alpha_0$ on the oriented segment from $\alpha_0$ to $\alpha_m$. In other words, we have that $\alpha_j - \alpha_0 = j\beta$ for all $j$, where $\beta = \alpha_1 - \alpha_0$. Then, we can then rewrite
		\[
			f(\mathbf{x}) = \mathbf{x}^{\alpha_0}(c_0 + c_1 \mathbf{x}^\beta + c_2 (\mathbf{x}^\beta)^2 + \cdots + c_m (\mathbf{x}^\beta)^m),
		\]
        where the second factor can be regarded as a univariate polynomial in $\bfx^\beta$ with complex coefficients and, therefore, we can decompose
		\begin{equation}
        \label{eq:decomposition}
		    f(\mathbf{x}) = c \ \mathbf{x}^{\alpha_0} (\mathbf{x}^\beta - \lambda_1) \cdots (\mathbf{x}^\beta - \lambda_m)
		\end{equation}
        for some $c, \lambda_1,\ldots,\lambda_m\in \bbC$. In order to conclude the proof it is enough to show that $m = 1$. Tne o do that, we employ the irreducibility of $f$. Note that, \Cref{eq:decomposition} is a decomposition as Laurent polynomials.

        We denote $\alpha_i = (\alpha_{i,1},\ldots,\alpha_{i,n})$ and $\beta = (\beta_1,\ldots,\beta_n).$

        Let $\bar{\alpha} = (\min_{i=0,\ldots,m} \alpha_{i,j})_{j=1,\ldots,N}$. Since $\Newt(f)$ is a segment, $\bar{\alpha}_j = \min(\alpha_{0,j},\alpha_{m,j})$. Clearly, $\bfx^{\bar{\alpha}} \ | \ f$. Since $f$ is assumed to be irreducible, $\bar{\alpha} = {\bf 0}$. Therefore, we will distinguish two cases: (i) $\bar{\alpha}=\alpha_0$ and (ii) $\bar{\alpha}\neq \alpha_0$.

        \textit{Case (i).} If $\bar{\alpha} = \alpha_0$, then the conclusion is immediate. Indeed, $\beta = \alpha_1 \in \bbZ_{\geq 0}^N$ and, by irreducibility, $m = 1$.

        \textit{Case (ii).} Assume $\bar{\alpha} \neq \alpha_0$. Since $\alpha_m - \alpha_0 = m \beta$, then for any $i\in \{1,\ldots,N\}$, $\alpha_{m,i} - \alpha_{0,i} = m \beta_i$. Note that, for any $i$, $\beta_i <0$ if and only if $\alpha_{0,i} > 0$ if and only if $\alpha_{m,i} = 0$ (the latter equivalence is because $\bar{\alpha}=\bf0$). In particular, $\alpha_0 = m \beta^-$, where $\beta^- = (\max (-\beta_j,0))_{j=1,\ldots,N}$ and, from \Cref{eq:decomposition}, we deduce the polynomial decomposition
        \[
            f(\bfx) = c \ (\bfx^{\beta + \beta^-} - \lambda_1\bfx^{\beta^-})\cdots (\bfx^{\beta + \beta^-} - \lambda_m\bfx^{\beta^-})
        \]
        where $\beta^-, \beta + \beta^- \in \bbZ_{\geq 0}^N$. By irreducibility, $m = 1$ and we conclude.
	\end{proof}
    
	\begin{corollary}
	\label{cor:perpendicular}
		Let $Z(f) \subset \bbC^N$ be a concise irreducible hypersurface such that $(\Trop Z(f))^\perp \neq \{ \bf0 \}$. Then, $f$ is a binomial.
	\end{corollary}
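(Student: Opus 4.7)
The plan is to deduce \Cref{cor:perpendicular} directly from \Cref{prop:alignedsupport} by showing that the hypothesis $(\Trop Z(f))^\perp \neq \{\mathbf{0}\}$ already forces $\Trop Z(f)$ to be a linear space; once this is established, \Cref{prop:alignedsupport} immediately yields that $f$ is a binomial.

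First I would fix a nonzero $\bfv \in (\Trop Z(f))^\perp$, so that $\Trop Z(f) \subset \bfv^\perp$. By \Cref{thrm:structuretheorem}, $\Trop Z(f)$ is a pure polyhedral fan of dimension $N-1$, and by \Cref{def:tropicalhypersurface} its maximal cones are precisely the $(N-1)$-dimensional cones of the normal fan $\calN(\Newt(f))$, which under the duality of \Cref{def:normal_fan} correspond to the edges of $\Newt(f)$. Concretely, for each edge $e = [p,q]$ of $\Newt(f)$, the cone $\calN_e$ has dimension $N-1$ and is contained in the hyperplane $(p-q)^\perp$.

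Then, for every edge $e = [p,q]$ of $\Newt(f)$, the linear span of $\calN_e$ is exactly $(p-q)^\perp$, since a cone of dimension $N-1$ sitting inside the $(N-1)$-dimensional subspace $(p-q)^\perp$ must span that subspace. On the other hand, $\calN_e \subset \Trop Z(f) \subset \bfv^\perp$, so the two $(N-1)$-dimensional subspaces $(p-q)^\perp$ and $\bfv^\perp$ have to coincide, which forces $p-q$ to be a nonzero scalar multiple of $\bfv$. Applied to every edge of $\Newt(f)$, this shows that all edges are parallel to $\bfv$, hence $\Newt(f)$ is contained in an affine line and is in fact a segment.

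At this point $\Trop Z(f)$ is the hyperplane normal to the edge direction of $\Newt(f)$, hence a linear space, and \Cref{prop:alignedsupport} concludes that $f$ is a binomial. The only mildly delicate step in the plan is the identification $\langle \calN_e \rangle = (p-q)^\perp$ for each edge, but this is simply a dimension count inside the normal-fan construction of \Cref{def:normal_fan}; everything else is formal once \Cref{prop:alignedsupport} and the Structure Theorem are in hand.
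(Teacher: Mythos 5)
Your proof is correct and is essentially the paper's argument read in the contrapositive: where you show directly that every edge of $\Newt(f)$ must be parallel to a fixed normal vector $\bfv$ (so that $\Newt(f)$ is a segment and $\Trop Z(f)$ is a hyperplane), the paper assumes $f$ is not a binomial, extracts two linearly independent edge directions, and observes that the corresponding codimension-one cones of $\Trop Z(f)$ then span all of $\bbR^N$, contradicting $(\Trop Z(f))^\perp \neq \{\mathbf{0}\}$. Both arguments rest on the same duality between edges of the Newton polytope and the maximal cones of the tropical hypersurface, and both reduce to \Cref{prop:alignedsupport}.
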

	\begin{proof}
			Let $F = \Trop Z(f)$. Since $F^\perp \neq \{\bf0\}$ and $\dim \langle F \rangle \geq N-1$, we have that $\dim F^\perp = 1$. Hence, $\dim \langle F \rangle = N-1$. Now, suppose towards obtaining a
            contradiction that $f$ is not a binomial. By \Cref{prop:alignedsupport}, $F$ is not a linear space. In particular, the Newton polytope of $f$ cannot be a line segment, hence the support of $f$ is not made by aligned vectors. Write $f = c_0 \bfx^{\alpha_0} + \cdots + c_d\bfx^{\alpha_d}$. Then, we have that there exist $i, j, k$ such that $\alpha_i - \alpha_j$ and $\alpha_i - \alpha_k$ are linearly independent. Hence, the following two faces of $F$ are contained in two distinct hyperplanes of $\bbR^N$: 
		\[
			F_{ij} = \{ \mathbf{x} \in \bbR^N : \alpha_i \cdot \mathbf{x} = \alpha_j \cdot \mathbf{x} \leq \alpha_\ell \cdot \mathbf{x} \ \forall \ell\}, \quad F_{ik} = \{ \mathbf{x} \in \bbR^N : \alpha_i \cdot \mathbf{x} = \alpha_k \cdot \mathbf{x} \leq \alpha_\ell \cdot \mathbf{x} \ \forall \ell\}.
		\]
		This implies that $\langle F \rangle = \bbR^N$, a contradiction.
	\end{proof}

	\begin{theorem}
		\label{thrm:containedinbinomial}	
		Let $X \subset \bbC^N$ be a concise irreducible variety. Then, $\Trop X$ is contained in a proper linear subspace of $\bbR^N$ if and only if $X$ is contained in a binomial hypersurface of $\bbC^N$.
	\end{theorem}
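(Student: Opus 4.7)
The strategy is to prove each direction separately, using \Cref{prop:alignedsupport} for the easy implication and \Cref{thrm:monomialmaps} for the interesting one.

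For the reverse implication, I plan to reduce to the irreducible case. If $X \subset Z(f)$ for a binomial $f$, then since $X$ is concise and hence not contained in any coordinate hyperplane, we may cancel any monomial factor of $f$ and assume $f = a\bfx^\alpha + b\bfx^\beta$ is irreducible. Then \Cref{prop:alignedsupport} gives that $\Trop Z(f)$ is a linear space --- explicitly the hyperplane $(\alpha-\beta)^\perp$ --- and so $\Trop X \subset \Trop Z(f)$ lies in a proper linear subspace of $\bbR^N$.

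For the forward direction, the plan is to construct a binomial in $I(X)$ by contracting $X$ along a well-chosen monomial map. Since $\Trop X$ is a rational polyhedral fan contained in a proper linear subspace, its rational linear span admits a nonzero integer normal vector $\bfu \in \bbZ^N$. Write $\bfu = \bfu^+ - \bfu^-$ with $\bfu^+, \bfu^- \in \bbZ_{\geq 0}^N$ of disjoint support, and consider the monomial map $\chi_\bfu \colon \bbC^N \dashrightarrow \bbC$, $\bfx \mapsto \bfx^\bfu$, which is well-defined on $(\bbC^\times)^N$ (and hence on a dense open of $X$, by conciseness). Applying \Cref{thrm:monomialmaps} to $\chi_\bfu$ yields $\Trop \overline{\chi_\bfu(X)} = \bfu^T(\Trop X) = \{0\}$, since $\bfu$ is orthogonal to $\Trop X$. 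Since $\overline{\chi_\bfu(X)}$ is an irreducible subvariety of $\bbC$, it must be either $\bbC$ or a single point; as $\Trop \bbC = \bbR$, the former is excluded, so the image is a single point $\{c\}$ with $c \in \bbC^\times$. Therefore $\bfx^{\bfu^+} - c\bfx^{\bfu^-}$ vanishes on a dense open of $X$ and hence on $X$ itself, exhibiting the desired binomial hypersurface containing $X$.

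The delicate step I expect is the tropical analysis on the target: converting the identity $\Trop \overline{\chi_\bfu(X)} = \{0\}$ into the conclusion that $\chi_\bfu$ is constant on a dense open of $X$. This hinges on distinguishing the two possible closures in $\bbC$ by their tropicalizations, on conciseness so that $\chi_\bfu$ is regular on a dense subset of $X$, and on the flexibility of \Cref{thrm:monomialmaps}, which applies to monomial maps with exponents in $\bbZ^N$, so $\chi_\bfu$ qualifies even when $\bfu$ has negative entries.
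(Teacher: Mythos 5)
Your proof is correct, and the forward direction takes a genuinely different route from the paper's. The paper invokes the finite tropical basis theorem \cite[Theorem 2.6.6]{sturmfelsmaclagan2015} to write $\Trop X = \bigcap_i \Trop Z(f_i)$ for finitely many irreducible generators $f_i$ of $I(X)$, passes to orthogonal complements of the linear spans to locate an index $i$ with $(\Trop Z(f_i))^\perp \neq \{\mathbf{0}\}$, and then applies \Cref{cor:perpendicular} (a Newton-polytope analysis of irreducible hypersurfaces) to conclude that $f_i$ is a binomial. You instead pick an integer normal vector $\bfu$ to the rational linear span of $\Trop X$, push $X$ forward along the character $\chi_\bfu$, and use \Cref{thrm:monomialmaps} together with the dimension-preservation built into the Structure Theorem to force the image to be a single nonzero constant $c$, which hands you the binomial $\bfx^{\bfu^+} - c\,\bfx^{\bfu^-}$ explicitly. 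Your argument is constructive (it produces the binomial, not merely its existence inside a tropical basis), bypasses both the tropical basis theorem and \Cref{cor:perpendicular}, and avoids the step $\Sigma^\perp = \Sigma_1^\perp + \cdots + \Sigma_s^\perp$ in the paper's proof, which as written deserves some care since the $\Sigma_i$ are fans rather than linear spaces. The trade-off is that your binomial need not be irreducible or lie in a chosen generating set of $I(X)$, but the statement does not require either, and the downstream use in \Cref{cor:finite_HadamardRank_condition} only needs containment in some binomial hypersurface. The reverse direction is the same as the paper's; note only that you do not really need to reduce to an irreducible binomial, since the computation of $\Trop Z(f)$ as the hyperplane $(\alpha-\beta)^\perp$ in the first paragraph of the proof of \Cref{prop:alignedsupport} holds for any binomial.
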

	\begin{proof}
		If $X \subset Z(f)$ and $f$ is a binomial, then $\Trop X \subset \Trop Z(f)$ and $\Trop Z(f)$ is a proper linear subspace by \Cref{prop:alignedsupport}. For the opposite direction, let $\Sigma = \Trop X$. By \cite[Theorem 2.6.6]{sturmfelsmaclagan2015}, every ideal has a finite \textit{tropical basis}. This means that there exists a finite collection $f_1, \ldots, f_s$ of generators of the ideal of $X$ such that
		\[
			\Sigma = \Sigma_1 \cap \cdots \cap \Sigma_s, \quad \Sigma_i = \Trop Z(f_i).
		\]
		This implies that
		\[
			\Sigma^\perp = \Sigma_1^\perp + \cdots + \Sigma_s^\perp.
		\]
		Since $X$ is irreducible, each $f_i$ is irreducible. Moreover, since $\Sigma$ is contained in a proper linear subspace, $\Sigma^\perp \neq \{\bf0\}$, from which it follows that $\Sigma^\perp_i \neq \{\bf0\}$ for some $i \in \{1,\ldots, s\}$. Since $\Sigma_i$ is a tropical hypersurface, by \Cref{cor:perpendicular} we know that $f_i$ is a binomial. Hence, $X$ is contained in a binomial hypersurface $Z(f_i)$.
	\end{proof}

    \begin{remark}\label{rmk:projective_tropicalization}
            The previous results work also in the projective setting. Given a concise projective variety $X \subset \bbP^n$, denote by $\widehat{X} \subset \bbC^{n+1}$ its affine cone. Then, one defines the tropicalization of $X$ by quotienting $\Trop \widehat{X}$ by the span of $\mathds{1}$ in order to work in the {\em tropical projective space} $\mathbb{TP}^n$: 
            \[
            \Trop X = \Trop \widehat{X} / \langle \mathds{1} \rangle \subset \mathbb{TP}^n = \bbR^{n+1}/\langle \mathds{1} \rangle.
            \]
            In particular, in the previous results, $\Trop X$ is a linear space inside $\mathbb{TP}^n \simeq \bbR^n$ if and only if $\Trop \widehat{X}$ is a linear space in $\bbR^{n+1}$.
        \end{remark}

    We conclude this section with the classification of irreducible varieties with infinite generic $X$-Hadamard-rank in the projective setting, i.e., such that Hadamard powers of $X$ do not fill the ambient space.
	\begin{corollary}\label{cor:finite_HadamardRank_condition}
		Let $X \subset \bbP^N$ be a concise irreducible variety. Then, the following are equivalent:
        \begin{enumerate}
            \item the generic $X$-Hadamard-rank is infinite, i.e., $X^{\star m} \subsetneq \bbP^N$ for all $m \geq 1$;
            \item $X$ is contained in a proper binomial hypersurface;
            \item the ideal of $X$ contains binomials.
        \end{enumerate}
        \end{corollary}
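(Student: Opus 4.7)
The plan is to establish $(3) \Leftrightarrow (2)$, then $(1) \Rightarrow (2)$ (which is the main content), and finally $(2) \Rightarrow (1)$.

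The equivalence $(2) \Leftrightarrow (3)$ is essentially tautological: if the homogeneous ideal $I(X)$ contains a binomial $f$, then $X \subset Z(f)$, a proper projective binomial hypersurface; conversely, if $X$ lies in a proper binomial hypersurface $Z(f)$, then $f \in I(X)$.

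For $(1) \Rightarrow (2)$, I plan to chain together three earlier results of this section. Starting from $\Hrk_X^\circ = \infty$, \Cref{prop:contained_in_idempotent} produces $t \in \mathbb{T}$ such that $tX$ is contained in a proper irreducible Hadamard-idempotent variety $Y \subsetneq \bbP^N$. Since $X$, and hence $tX$, is concise, we land in Case 2 of the proof of \Cref{prop:contained_in_idempotent}, which takes $Y = (tX)^{\star r}$ for some $r$; this variety contains $\mathds{1}$ and is therefore concise. Passing to affine cones, $\widehat{Y} \subsetneq \bbC^{N+1}$ is concise, irreducible, proper, and satisfies $\widehat{Y}^{\star 2} = \widehat{Y}$. \Cref{thrm:tropXlinear} then yields that $\Trop \widehat{Y}$ is a linear subspace of $\bbR^{N+1}$, and by the \Cref{thrm:structuretheorem} its dimension equals $\dim \widehat{Y} < N+1$, so it is a \emph{proper} linear subspace. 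Since $\widehat{tX} \subset \widehat{Y}$, also $\Trop \widehat{tX}$ lies in this proper linear subspace, and \Cref{thrm:containedinbinomial} supplies a binomial $f = a\bfx^\alpha + b\bfx^\beta$ with $\widehat{tX} \subset Z(f)$.

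To descend to a projective binomial hypersurface containing $X$, one needs $f$ to be homogeneous. Since $\widehat{tX}$ is a cone, $f(\lambda v) = a\lambda^{|\alpha|} v^\alpha + b \lambda^{|\beta|}v^\beta = 0$ for all $v \in \widehat{tX}$ and $\lambda \in \bbC$. Evaluating at $v = 0$ forces $|\alpha|, |\beta| \geq 1$; if $|\alpha| \neq |\beta|$, comparing the two distinct powers of $\lambda$ forces $v^\alpha \equiv 0$ and $v^\beta \equiv 0$ on the irreducible $\widehat{tX}$, which makes $tX$ lie in a coordinate hyperplane, contradicting conciseness. Hence $|\alpha| = |\beta|$, $f$ is a homogeneous binomial, and $f \circ t = a t^\alpha \bfx^\alpha + b t^\beta \bfx^\beta$ is again a homogeneous binomial vanishing on $\widehat{X}$, giving a proper projective binomial hypersurface of $\bbP^N$ containing $X$.

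Finally, for $(2) \Rightarrow (1)$, suppose $X \subset Z(g)$ for some homogeneous binomial $g$. By \Cref{prop:alignedsupport}, $\Trop Z(g)$ is a proper linear subspace $H \subsetneq \bbR^{N+1}$, and $\Trop \widehat{X} \subset H$. By \Cref{thm:trophadamard} applied iteratively, $\Trop \widehat{X^{\star m}} = m \cdot \Trop \widehat{X} \subset H$ for every $m$, so the \Cref{thrm:structuretheorem} gives $\dim \widehat{X^{\star m}} = \dim \Trop \widehat{X^{\star m}} \leq \dim H \leq N < N+1$, and hence $X^{\star m} \subsetneq \bbP^N$ for all $m$. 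The main obstacle of the whole argument is the homogenization step in the previous paragraph, where the conic structure, irreducibility, and conciseness of $\widehat{tX}$ must be combined carefully to rule out a binomial relation with mismatched degrees; everything else is a direct bookkeeping exercise built on the tropical machinery already in place.
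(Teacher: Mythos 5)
Your proof is correct and follows essentially the same route as the paper: the equivalence $(2)\Leftrightarrow(3)$ is immediate, $(2)\Rightarrow(1)$ uses \Cref{prop:alignedsupport} and \Cref{thm:trophadamard} to trap all tropicalized Hadamard powers in a fixed proper linear subspace, and $(1)\Rightarrow(2)$ chains \Cref{prop:contained_in_idempotent}, \Cref{thrm:tropXlinear} and \Cref{thrm:containedinbinomial} exactly as the paper does. Your explicit homogenization argument (using that $\widehat{tX}$ is an irreducible concise cone to force $|\alpha|=|\beta|$) is a detail the paper dispatches implicitly via \Cref{rmk:projective_tropicalization}, and it is a correct and welcome addition.
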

	\begin{proof}
		The fact that (2) is equivalent to (3) is trivial by the fact that taking ideals reverses inclusions. 
        
        We prove that (2) implies (1). Assume that $X \subset Y \subsetneq \bbP^N$ where $Y$ is a binomial hypersurface. Interpreting \Cref{prop:alignedsupport} in light of \Cref{rmk:projective_tropicalization}, $\Trop Y$ is a linear space. Hence, by \Cref{thm:trophadamard}, $\Trop Y^{\star m} = \Trop Y + \cdots + \Trop Y = \Trop Y$. Hence, $\dim X^{\star m} \leq \dim Y^{\star m} = \dim Y < N$ and $\Hrk_X^\circ = \infty$.
        
        We now prove that (1) implies (2). Since $X$ is concise, then by \Cref{prop:contained_in_idempotent}, we know that $X \subset t Y$, where $t \in \bbT$ is a diagonal invertible matrix, $Y = Y^{\star 2}$ and $Y$ is concise and irreducible. By \Cref{thrm:tropXlinear}, we know that $\Trop Y$ is a linear space. Hence, by \Cref{thrm:containedinbinomial}, $Y$ is contained in an irreducible binomial hypersurface $Z$. In particular, $t Z$ is still an irreducible binomial hypersurface. Since $X \subset tY \subset tZ$, this concludes the proof.
	\end{proof}

    \begin{remark}
        We recall that the task of checking computationally whether an ideal in a polynomial ring contains a binomial was studied in \cite{jensen2017finding}.
    \end{remark}

\section{Dimensions of Hadamard products of secant varieties}\label{sec:dimension_Hadamard_powers}

Inspired by \Cref{question:B} and the connection with Restricted Boltzmann Machines already mentioned in \Cref{rmk:RBM}, we are interested in the dimensions of Hadamard products of secant varieties. 
This allows to compute generic $r$th Hadamard-X-ranks: indeed, recall that the generic $r$th Hadamard-$X$-rank with respect is equal to the smallest $m$ for which $\sigma_r(X)^{\star m}$ fills the ambient space. Since many varieties appearing in tensor decomposition and algebraic statistics are secant varieties of toric varieties, we will focus on them.

\begin{notation}
    In order to ease notation, for any $\bfr = (r_1,\ldots,r_m)$, we write $\sigma_\bfr(X) := \sigma_{r_1}(X)\star\cdots\star\sigma_{r_m}(X)$.
\end{notation}

\subsection{Expected dimensions}
Secant varieties of non-degenerate algebraic varieties have a notion of \textit{expected dimensions} which comes from a naïve parameter count. If $X \subset \bbP^N$ is non-degenerate, then 
\begin{equation}\label{eq:expdim_secants}
	\exp \!.\dim\sigma_r(X) = \min\{N, \ r\dim X + r-1\} \quad \text{ and }\quad \dim \sigma_r(X) \leq \exp \!.\dim\sigma_r(X).
\end{equation}
When the latter inequality is strict we say that $X$ is \textit{$r$-defective}. 

Examples of defective varieties are known among Segre-Veronese varieties since the XIX century. The challenge of providing a complete classification of such defective varieties took the attention of a very broad literature since the late XIX century. Complete results are known for Veronese varieties and some families of Segre-Veronese varieties. We refer to \cite{oneto2025ranks} for a recent updated list of known cases.

In the case of Hadamard products of algebraic varieties a notion of expected dimension was introduced in \cite{BCK17}. In particular, given $X,Y \subset \bbP^N$,
\begin{equation}
\label{eqn:expected_dimension_Hadamard}
    \exp \!.\dim X \star Y = \min\{\dim X + \dim Y - \dim T, \ N\} , 
\end{equation}
where $T \subset (\bbC^\times)^{N+1}/\bbC^\times$ is the highest-dimensional torus acting on both $X$ and $Y$. By \cite[Proposition~5.4]{BCK17}, $\dim X \star Y  \leq \exp \!.\dim X\star Y$.
In particular, if $X \subset \bbP^N$ is a toric variety, that is, we have a torus $\mathbb{T}_X \simeq (\bbC^\times)^{\dim X}$ acting on $X$, then, from \Cref{eqn:expected_dimension_Hadamard}, we get
\begin{equation}\label{eq:expdim_hadamard}
	\dim \sigma_\bfr(X)\leq \exp.\dim\sigma_\bfr(X) = \min\left\{\sum_{i=1,\ldots,m}\dim\sigma_{r_i}(X) - (m-1) \dim X, N\right\}.
\end{equation} 
If the inequality is strict, we say that $X$ is \textbf{\textit{$\bfr$}-Hadamard-defective}.

If we bound \Cref{eq:expdim_hadamard} by using the expected dimension of $\sigma_{r_i}(X)$ from \Cref{eq:expdim_secants}, then 
\begin{align}
	\dim \sigma_\bfr(X) & \leq \min\left\{\sum_{i=1,\ldots,m}(r_i\dim X + r_i-1) - (m-1)\dim X, N\right\} \label{eq:expected_dim_equality} \\ 
	& = \min\left\{\left(\sum_{i=1,\ldots,m}(r_i-1)+1\right)\dim X + \sum_{i=1,\ldots,m}(r_i-1),N\right\} \nonumber \\
    & = \exp \!.\dim\sigma_{R}(X), \qquad \text{ with } R := R(r_1,\ldots,r_m) = \sum_{i=1,\ldots,m}(r_i-1)+1.\nonumber 
\end{align} 
In the case $r_1 = \cdots = r_m = r$, expecting \Cref{eq:expected_dim_equality} to be an equality, namely, that $\dim \sigma_r(X)^{\star m} = \min\left\{\left(m(r-1)+1\right)\dim X + m(r-1),N\right\}$, we say that the \textit{expected generic $r$th Hadamard-$X$-rank} with respect to a toric variety $X$ is
\[
    \exp \!. \ \Hrk_{X,r}^\circ = \left\lceil \frac{N-\dim X}{(r-1)(\dim X+1)} \right\rceil.
\]
In \Cref{lemma:upperbound_dimensions}, we will show that the dimension of the $R$th secant variety as in \Cref{eq:expected_dim_equality} of a toric variety $X$ is always a lower bound for the dimension of $\sigma_{\bfr}(X)$. Explicitly, continuing from \Cref{eq:expected_dim_equality}, we will prove that, given a concise toric variety $X$ 
\begin{equation}
\label{eq:chain_of_dimensions}
    \dim\sigma_{R}(X) \leq \dim \sigma_{\bfr}(X) \leq \exp \!.\dim\sigma_{\bfr}(X) = \exp \!.\dim\sigma_{R}(X).
\end{equation}
In the cases where we know that $X$ is not $R$-defective, this chain of inequalities is a chain of equalities and allows us to compute dimensions of Hadamard products of secant varieties of $X$. 

This is the strategy used in \cite[Corollary 26]{MM17:DimensionKronecker} for restricted binary Boltzmann machines, that is, in the case of $X = S_{\mathds{1}}$ being the Segre variety of binary tensors and $r_1 = \cdots = r_m = 2$, for any $m$, employing the classification of defective binary Segre varieties $S_{\mathds{1}}$ from \cite{catalisano2011secant}. With \Cref{lemma:upperbound_dimensions}, we extend this strategy to any concise toric variety for an arbitrary choice of $\bfr$.

\subsection{A useful parametrization} We introduce a useful parametrization of (an affine chart) of the Hadamard product of secant varieties of a toric variety $X \subset \bbP^N$ that we will use instead of the classical one.

Let $A = (\alpha_0 | \cdots | \alpha_N) \in \mathbb{Z}^{(n+1)\times(N+1)}$ be a matrix giving a monomial parametrization $\varphi_A\colon (\bbC^\times)^{n+1} \to \bbP^N$ of the toric variety $X \subset \bbP^N$ as in \Cref{def:toricvarieties}. 

\subsubsection{Parametrizations of $r$th secant varieties}
The secant variety $\sigma_r(X) \subset \bbP^N$ can be regarded as a linear projection of the toric variety defined by the matrix $A \otimes I_r \in \mathbb{Z}^{r(n+1)\times r(N+1)}$, where $I_r \in \mathbb{Z}^{r \times r}$ is the identity matrix. 
Indeed, if $\varphi_A \colon (\bbC^\times)^{n+1} \longrightarrow \bbP^N, \ {\mathbf{x}} \mapsto (\cdots: {\mathbf{x}}^{\alpha_i}:\cdots)$ defines $X$, then $\sigma_r(X)$ is parametrized by the composition of the maps
\begin{equation} 
    \begin{array}{c c c c}
    \varphi_{A \otimes I_r} : & ((\bbC^\times)^{n+1})^{\times r} & \longrightarrow &
     \bbP^{(N+1)r -1} , \\ 
    & ( {\mathbf{x}}_1,\ldots, {\mathbf{x}}_r) & \longmapsto &(\cdots:\varphi_A(\bfx_i):\cdots)_{i=1,\ldots,r}; 
    \\
    \\
    \pi_r : & \bbP^{(N+1)r-1}
    & \dashrightarrow & \bbP^N \\
    & ((z_{i,0}:\cdots:z_{i,N}))_{i=1,\ldots,r} & \longmapsto & (\cdots:\sum_{i=1}^r z_{i,j}:\cdots)_{j = 0,\ldots,N}.
    \end{array}
\end{equation} 
On the affine side, this corresponds to the usual parametrization of the affine cone of the $r$th secant variety as
\begin{equation}
    \begin{array}{c c c c}
     \widehat{\Phi}_{A \otimes I_r} : &((\bbC^\times)^{n+1})^{\times r} & \longrightarrow & \bbC^{N+1}, \\ 
    & ( {\mathbf{x}}_1,\ldots, {\mathbf{x}}_r) & \longmapsto & \widehat{\varphi}_A(\bfx_1) + \widehat{\varphi}_A(\bfx_2) + \cdots + \widehat{\varphi}_A(\bfx_r).
    \end{array}
    \label{eq:classical-secant}
\end{equation}
If we pre-compose the above map with the automorphism of the torus $((\bbC^\times)^{n+1})^{\times r}$ given by 
\[
(\bfy_1,\ldots,\bfy_r) \longmapsto (\bfy_1,\bfy_1\star\bfy_2,\ldots,\bfy_1 \star \bfy_r),
\] 
whose inverse is $(\bfx_1,\ldots,\bfx_r) \longmapsto (\bfx_1,\bfx_1^{\star (-1)}\star\bfx_2, \ldots, \bfx_1^{\star (-1)}\star \bfx_r)$, we get a parametrization of the affine cone of the $r$th secant variety as
\begin{equation}
    \begin{array}{c c c c}
     \widehat{\Phi}_{A \otimes \bar{I}_r} : &((\bbC^\times)^{n+1})^{\times r} & \longrightarrow & \bbC^{N+1}, \\ 
    & ( {\mathbf{y}}_1,\ldots, {\mathbf{y}}_r) & \longmapsto & \widehat{\varphi}_A(\bfy_1) + \widehat{\varphi}_A(\bfy_1\star\bfy_2) + \cdots + \widehat{\varphi}_A(\bfy_1\star\bfy_r),
    \end{array}
\label{eq:reparametrization_secant}
\end{equation}
namely, as the composition of the monomial map $\widehat\varphi_{A \otimes \bar{I}_r}$, where 
$
    \bar{I}_r = \left(\begin{smallmatrix}
        1 & \mathds{1} \\
        \bf0 & I_{r-1}
    \end{smallmatrix}\right)
$, 
with the affine linear projection corresponding to $\pi_r$, which we denote by $\widehat\pi_r$. As mentioned in \Cref{rmk:reparametrize_toric}, (dense subsets of) toric varieties can be reparametrized in terms of matrices that have the same row-span as the original matrix. 
This is the case when we replace $A \otimes I_r$ with $A \otimes \bar{I}_r$.

\begin{example}\label{example:running_1}
    As a running example we consider the case $\bfr = (2,3)$ that we will use to illustrate the definitions and the proof of \Cref{lemma:upperbound_dimensions} presented below. 
    Namely, we will compare the dimension of $\sigma_4(X)$ and the dimension of $\sigma_{(2,3)}(X) = \sigma_2(X) \star \sigma_3(X)$. In particular, we consider the parametrization of (a dense subset of) the affine cone of the $4$th secant variety of $X$ given by 
    \[
        \widehat{\Phi}_{A\otimes \bar{I}_4}(\bfy_1,\bfy_2,\bfy_3,\bfy_4) = \widehat{\varphi}_A(\bfy_1) + \widehat{\varphi}_A(\bfy_1\star\bfy_2) + \widehat{\varphi}_A(\bfy_1\star\bfy_3) + \widehat{\varphi}_A(\bfy_1\star\bfy_4).
    \]
\end{example}

\subsubsection{Parametrization of Hadamard products of secant varieties} 
The Hadamard product of secant varieties $\sigma_{\bfr}(X) = \sigma_{r_1}(X) \star\cdots\star \sigma_{r_m}(X)\subset \bbP^N$ is a linear projection of the toric variety defined by the matrix $A \otimes B_{\bfr'}\in \bbZ_{\geq 0}^{(\sum_{k}r_k)(n+1) \times (\prod_k r_k)(N+1)}$, where $\bfr' := \bfr - \mathds{1}$, i.e., $r'_k = r_k-1$, and $B_{\bfr'}\in \mathbb{Z}^{(\sum_k r_k) \times (\prod_k r_k)}$ is the matrix defining the standard monomial parametrization of the Segre variety $S_{\bfr'}$ as in \Cref{example:reparametrize_segre}. 
Indeed, $\sigma_{\bfr}(X) \subset \bbP^N$ is parametrized by the composition of the maps  
\begin{equation} 
    \begin{array}{c c c c}
    \varphi_{A \otimes B_{\bfr'}} \colon & \prod_{k=1}^m((\bbC^\times)^{n+1})^{\times r_k} & \longrightarrow & 
     \bbP^{(N+1)\times(\prod_k r_k) -1}, \\ 
    & ({\mathbf{x}}_{k,i})_{\substack{k=1,\ldots,m \\ i = 0,\ldots,r'_k}} & \longmapsto &(\cdots :\varphi_A( \mathbf{x}_{1,i_1} \star \cdots \star \mathbf{x}_{m,i_m}) :\cdots)_{\mathbf{i}\in [r'_1]\times\cdots\times [r'_m]};
    \\
    \\
\pi_{\bfr} \colon &  \bbP^{(N+1)\times(\prod_k r_k) -1} &\dashrightarrow& \bbP^{N} \\
& ((z_{\bfi,0}:\ldots:z_{\bfi,N}))_{\bfi\in [r'_1]\times\cdots\times [r'_m]} &\longmapsto & (\cdots:\sum_{\bfi \in [r'_1]\times\cdots\times[r'_k]}z_{\bfi,j}:\cdots)_{j = 0,\ldots,N}, 
    \end{array}
    \label{eq:par-sigma-r}
\end{equation}
where $[r'_k]:=\{0,1,\ldots, r'_k\}$.  
On the affine side, this corresponds to the classical parametrization of the affine cone of $\sigma_\bfr(X)$ given by 
\begin{equation}
    \begin{array}{c c c c}
     \widehat{\Phi}_{A \otimes B_{\bfr'}} : &\prod_{k=1}^m((\bbC^\times)^{n+1})^{\times r_k} & \longrightarrow & \bbC^{N+1}, \\ 
    & ( {\mathbf{x}}_{k,i_k})_{\substack{k=1,\ldots,m \\ i_k = 0,\ldots,r'_k}} & \longmapsto & 
    \left(\sum_{i_1=0}^{r'_1}\widehat{\varphi}_A(\bfx_{1,i_1})\right)\star\cdots\star\left(\sum_{i_m=0}^{r'_m} \widehat{\varphi}_A(\bfx_{m,i_m})\right)\\
    & & & = \sum_{\bfi \in [r'_1]\times\cdots\times [r'_m]}\left(\widehat{\varphi}_A(\bfx_{1,i_1}) \star \cdots \star \widehat{\varphi}_A(\bfx_{m,i_m})\right).
    \end{array}
\end{equation}
Since $\widehat{\varphi}_A(\bfx_1 \star \bfx_2) = \widehat{\varphi}_A(\bfx_1) \star \widehat{\varphi}_A({\bfx_2})$, 
we have that 
\[
    \mathop{\bigstar}_{k=1}^m\left(\sum_{i_k=0}^{r'_k}\widehat{\varphi}_A(\bfx_{k,i_k})\right)
    =  \widehat{\varphi}_A(\bfx_{1,0}\star \bfx_{2,0}\star\cdots\star\bfx_{m,0}) \star \mathop{\bigstar}_{k=1}^m\left(\sum_{i_k=0}^{r'_k}\widehat{\varphi}_A(\bfx_{k,i_k}\star \bfx_{k, 0 }^{\star (-1)})\right). 
\]
Using this and the fact that $\widehat{\varphi}_A(\bfx \star \bfx^{\star(-1)}) = \widehat{\varphi}_A(\mathds{1}) = \mathds{1}$,  
we obtain a parametrization of the affine cone of $\sigma_{\bfr}(X)$ as 
\begin{equation}
    \begin{array}{c c c c}
     \widehat{\Phi}_{A \otimes \bar{B}_{\bfr'}} : & ((\bbC^\times)^{n+1})^{\times \left(\sum_kr'_k+1\right)} & \longrightarrow & \bbC^{N+1}, \\ 
    & 
    (\mathbf{y}_0,
    ( \bfy_{k,j})_{\substack{k=1,\ldots,m \\ j = 1,\ldots,r'_k}} ) & \longmapsto 
    & \widehat{\varphi}_A(\bfy_0) \star \mathop{\bigstar}_{k=1}^m\left(\mathds{1} + \sum_{i_k=1}^{r'_k} \widehat{\varphi}_A(\bfy_{k,i_k})\right) \\
    & & & = \sum_{\bfj \in [r'_1]\times\cdots\times [r'_m]} \widehat{\varphi}_A(\bfy_0 \star \bfy_\bfj) , 
    \end{array}
\label{eq:reparametrize_hadamard} 
\end{equation}
where
\[
    \bfy_\bfj := \bfy_{1,j_1} \star \cdots \star \bfy_{m,j_m}, \text{ with the convention that } \bfy_{i,0} = \mathds{1}.
\]
Let $\bar{B}_{\bfr'}$ be the matrix defined in \Cref{example:reparametrize_segre}. Observe that \eqref{eq:reparametrize_hadamard} is obtained by composing the parametrization $\widehat{\varphi}_{A \otimes \bar{B}_{\bfr'}}$ with the affine linear projection corresponding to $\pi_{\bfr}$, which we denote by $\widehat\pi_{\bfr}$. 
Moreover, as expected, in the special case $m = 1$, \eqref{eq:reparametrization_secant} and \eqref{eq:reparametrize_hadamard} coincide (up to relabeling of the parameter indices).

Summing up, we have a parametrization of the affine cone of $\sigma_r(X)$ obtained as $\widehat\pi_r \circ \widehat\varphi_{A \otimes \bar{I}_r}$ and a parametrization of the affine cone of $\sigma_\bfr(X)$ obtained as $\widehat\pi_{\bf r} \circ \widehat\varphi_{A \otimes \bar{B}_{\bfr'}}$. 
Note that the matrix $\bar{B}_{\bfr'}$ contains the matrix $\bar{I}_{R}$, $R=\sum_{k=1}^mr'_k+1$ as a submatrix. This will be crucial for our proof of \Cref{lemma:upperbound_dimensions} in the next section. 

\begin{example}\label{example:running_2}
    Continuing our running example (\Cref{example:running_1}), we consider the parametrization of (a dense subset of) the affine cone of the Hadamard product $\sigma_{(2,3)}(X) = \sigma_2(X)\star\sigma_3(X)$ given by
    \begin{align*}
        &\widehat{\Phi}_{A\otimes \bar{B}_{1,2}}(\bfy_0,\bfy_{1,1},\bfy_{2,1},\bfy_{2,2}) = \widehat{\varphi}_A(\bfy_0) \star (\mathds{1} + \widehat{\varphi}_A(\bfy_{1,1})\star (\mathds{1} + \widehat{\varphi}_A(\bfy_{2,1}) + \widehat{\varphi}_A(\bfy_{2,2})) \\
         & = \widehat\varphi_A(\bfy_0) + \widehat\varphi_A(\bfy_0\star\bfy_{1,1}) + \widehat\varphi_A(\bfy_0\star\bfy_{2,1}) + \widehat\varphi_A(\bfy_0\star\bfy_{2,2}) + \widehat\varphi_A(\bfy_0\star\bfy_{1,1} \star \bfy_{2,1}) + 
        \widehat\varphi_A(\bfy_0\star\bfy_{1,1} \star \bfy_{2,2}).
    \end{align*}
\end{example}

\subsection{On dimensions of Hadamard products of secant varieties of toric varieties}

In this section, we prove the following result. As already mentioned, this will be our crucial tool to deduce dimensions of Hadamard products of secant varieties of toric varieties whose secants are known to be non-defective. 

\begin{theorem}
\label{lemma:upperbound_dimensions} 
    Let $X$ be a non-degenerate toric variety. Let $\bfr = (r_1,\ldots,r_m) \in \bbZ_{\geq 1}^m$ and let $R = \sum_{k=1}^m (r_k-1)+1$. Then, the dimension of $\sigma_R(X)$ is always a lower bound for the dimension of $\sigma_\bfr(X)$. 
\end{theorem}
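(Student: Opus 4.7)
The plan is to compare tangent spaces of $\sigma_{\bfr}(X)$ and $\sigma_{R}(X)$ at generic points by applying Terracini's lemma (both for secant varieties and for Hadamard products). The key will be to exhibit, inside the tangent space to $\sigma_{\bfr}(X)$ at a generic point, a sub-sum of tangent spaces to $X$ at $R$ independently generic points; Terracini applied to $\sigma_R(X)$ then gives the desired lower bound on the dimension.

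First I would fix a generic $p=p_1\star\cdots\star p_m\in\sigma_{\bfr}(X)$, with $p_k=\sum_{i=1}^{r_k}x_{k,i}$ for generic $x_{k,i}\in X$. Iterating Terracini's lemma for Hadamard products (\cite[Lemma~2.12]{BCK17}) and then applying classical Terracini to each $\sigma_{r_k}(X)$ yields
\[
T_{p}\sigma_{\bfr}(X)=\sum_{k=1}^{m}\sum_{i=1}^{r_k} p_{1}\star\cdots\star p_{k-1}\star T_{x_{k,i}}X\star p_{k+1}\star\cdots\star p_{m}.
\]
Expanding each $p_{l}=\sum_{j_{l}}x_{l,j_{l}}$ and using that, for a toric variety, $T_{q}X=q\star\mathrm{rowspan}(A)$ and $X^{\star 2}=X$ (so $y\star T_{x}X=T_{y\star x}X$ whenever $y,x\in X$), this collapses to
\[
T_{p}\sigma_{\bfr}(X)=\sum_{\bfj\in[r_1]\times\cdots\times[r_m]} T_{y_{\bfj}}X,\qquad y_{\bfj}:=x_{1,j_1}\star\cdots\star x_{m,j_m}\in X.
\]

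Next I would single out inside $[r_1]\times\cdots\times[r_m]$ the subset $\mathcal{I}_{0}$ of ``axis'' tuples: the reference $\bar{\bfj}=(1,\ldots,1)$ together with every tuple differing from $\bar{\bfj}$ in exactly one coordinate, so that $|\mathcal{I}_{0}|=1+\sum_{k}(r_{k}-1)=R$. The core step is to check that, as $(x_{k,i})$ varies generically in $X$, the $R$-tuple $(y_{\bfj})_{\bfj\in\mathcal{I}_{0}}$ varies dominantly in $X^{R}$. This I would witness concretely: given any target $(w_{0},(w_{k,j})_{k;\,j\geq 2})\in X^{R}$, the choice $x_{1,1}=w_{0}$, $x_{k,1}=\mathds{1}$ for $k\geq 2$, $x_{1,j}=w_{1,j}$ for $j\geq 2$, and $x_{k,j}=w_{k,j}\star w_{0}^{\star(-1)}$ for $k\geq 2,\,j\geq 2$ lies in $X^{\sum_k r_k}$ (all entries belong to $X$ because $X$ is toric, hence contains $\mathds{1}$ and is closed under Hadamard products and torus-inverses) and is a preimage of the prescribed target.

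Once dominance is established, Terracini applied to $\sigma_{R}(X)$ at the $R$ generic points $(y_{\bfj})_{\bfj\in\mathcal{I}_0}$ gives
\[
\dim T_{p}\sigma_{\bfr}(X)\;\geq\;\dim\sum_{\bfj\in\mathcal{I}_0}T_{y_{\bfj}}X\;=\;\dim\sigma_{R}(X),
\]
and since $p$ is smooth and generic in $\sigma_{\bfr}(X)$ the left-hand side equals $\dim\sigma_{\bfr}(X)$, finishing the proof. The main obstacle I expect is pinning down the correct subset $\mathcal{I}_0$ and verifying dominance of the induced map $X^{\sum_k r_k}\to X^R$; once that combinatorial reduction is in place, the two Terracini computations fit together immediately.
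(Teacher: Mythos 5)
There is a genuine gap at the step where you claim the tangent space ``collapses to'' $\sum_{\bfj}T_{y_{\bfj}}X$. Your first Terracini display is correct, and since $T_qX=q\star V$ with $V:=\operatorname{rowspan}(A)$ and Hadamard multiplication by a fixed point is linear, it can be rewritten as
\[
T_{p}\sigma_{\bfr}(X)\;=\;\sum_{k=1}^{m}\sum_{i=1}^{r_k} z_{k,i}\star V,
\qquad
z_{k,i}:=p_{1}\star\cdots\star p_{k-1}\star x_{k,i}\star p_{k+1}\star\cdots\star p_{m}=\sum_{\bfj\,:\,j_k=i}y_{\bfj}.
\]
The problem is the next expansion: for a \emph{linear space} $V$ one only has $(q_1+q_2)\star V\subseteq q_1\star V+q_2\star V$, because the image of a sum of linear maps is contained in, but generally not equal to, the sum of their images. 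So expanding $p_\ell=\sum_{j_\ell}x_{\ell,j_\ell}$ gives only $T_p\sigma_{\bfr}(X)\subseteq\sum_{\bfj}T_{y_{\bfj}}X$ --- an \emph{upper} bound on the tangent space, useless for a lower bound on the dimension --- and the claimed equality is false. Concretely, for $X$ the rational normal curve in $\bbP^8$ and $\bfr=(2,2)$, the four points $y_{\bfj}$ are distinct for generic $x_{k,i}$, so $\sum_{\bfj\in\{1,2\}^2}T_{y_{\bfj}}X$ has projective dimension $7$, whereas $\dim\sigma_2(X)\star\sigma_2(X)\leq 5$ by \eqref{eq:expdim_hadamard}; your identity would force $7\leq 5$. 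As a result, the later steps have nothing to act on: you never establish the inclusion $\sum_{\bfj\in\mathcal{I}_0}T_{y_{\bfj}}X\subseteq T_p\sigma_{\bfr}(X)$ that the final Terracini application for $\sigma_R(X)$ requires, and the (correct, easy) dominance of $(x_{k,i})\mapsto(y_{\bfj})_{\bfj\in\mathcal{I}_0}$ onto $X^R$ does not help.

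The missing idea is a degeneration. The tangent space is spanned by the $\sum_k r_k$ ``slice sums'' $z_{k,i}\star V$, each of which mixes one axis term $y_{\bfj}$, $\bfj\in\mathcal{I}_0$, with higher-order interaction terms. The paper's proof rescales the parameters by $\nu$ (see \Cref{claim:1}) so that, after row and column normalization, each slice sum converges as $\nu\to 0$ to the corresponding axis term $y_{\bfj}\star V$; lower semicontinuity of matrix rank then yields $\dim\sigma_{\bfr}(X)\geq\dim\sigma_R(X)$. Your observation that the axis tuples can be made generic in $X^R$ is the analogue of the fact that $\bar B_{\bfr'}$ contains $\bar I_R$ as a submatrix, but without the limit-plus-semicontinuity argument (or some substitute for it) the comparison between the two tangent spaces does not go through.
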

\begin{remark}
    As mentioned, this generalizes \cite[Lemma 25]{MM17:DimensionKronecker}, which was only for $r_1 = \cdots = r_m = 2$. Except for small adaptations, the strategy of the proof is essentially the same. 
    Note that the discussion of \cite{MM17:DimensionKronecker} is in terms of \textit{exponential families}. 
    However, it is well-known that such statistical models are strictly related to embedded toric varieties. 
    Indeed, the standard exponential parametrization of the exponential family associated with the matrix $A \in \bbZ^{(n+1)\times(N+1)}$ corresponds to the precomposition of our monomial map $\widehat\varphi_A$ with the map $\bbR^{n+1} \to \bbR_{\geq 0}^{n+1}$ given by $(\eta_1,\ldots,\eta_n) \mapsto (\exp(\eta_1),\ldots,\exp(\eta_n))$, see also \cite[Example~2.4]{michalek2016exponential}. 
    We further observe that the argument of the proof and the statement remain valid when the matrix $A$ has real entries. 
    Using the terminology of \cite{MM17:DimensionKronecker}, our \Cref{lemma:upperbound_dimensions} can be rephrased as follows. 
    \begin{corollary}
    \label{cor:lowerbound_expfamily}
        Let $\mathcal{E}_A$ be the exponential family associated with the matrix $A \in \bbZ^{(n+1)\times(N+1)}$ and let $\mathcal{M}_r(\mathcal{E})$ denote its $r$th mixture model. 
        Then, the dimension of the $R$th mixture model $\mathcal{M}_R(X)$, with $R=\sum_k(r_k-1)+1$, is a lower bound for the dimension of the Hadamard product of mixture models $\mathcal{M}_{r_1}(\mathcal{E}_A)\star \cdots \star \mathcal{M}_{r_m}(\mathcal{E}_A)$.
    \end{corollary}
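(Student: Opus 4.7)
The plan is a Terracini-type tangent-space argument, exploiting the toric structure of $X$ and the matrix inclusion $\bar I_R\hookrightarrow \bar B_{\bfr'}$ highlighted in Section~4.2. Since $\dim \sigma_R(X) = \dim T_q \sigma_R(X)$ at a generic smooth $q \in \sigma_R(X)$ and $\dim \sigma_\bfr(X) = \dim T_p \sigma_\bfr(X)$ at a generic smooth $p \in \sigma_\bfr(X)$, it suffices to produce a smooth $p \in \sigma_\bfr(X)$ and a smooth $q \in \sigma_R(X)$ such that $\dim T_p \sigma_\bfr(X) \geq \dim T_q \sigma_R(X)$.

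First I would apply the iterated Hadamard Terracini Lemma (\cite[Lemma~2.12]{BCK17}) together with the usual Terracini for secant varieties. At a generic $p = p_1 \star \cdots \star p_m \in \sigma_\bfr(X)$ with generic additive decompositions $p_k = q_{k,1} + \cdots + q_{k,r_k}$ on $X$, one obtains
\[
T_p \sigma_\bfr(X) \;=\; \sum_{k=1}^m \sum_{i=1}^{r_k} \Bigl(\mathop{\bigstar}_{\ell\neq k} p_\ell\Bigr) \star T_{q_{k,i}} X.
\]
Since $X$ is toric with $\mathds{1}\in X$, the torus acts by Hadamard multiplication and $T_{q_{k,i}} X = q_{k,i}\star V$ for $V := T_\mathds{1}X$. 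Hence, setting
\[
\bfc_{k,i} \;:=\; \Bigl(\mathop{\bigstar}_{\ell\neq k} p_\ell\Bigr) \star q_{k,i},
\]
the tangent space rewrites as a sum of $\sum_k r_k$ Hadamard translates of a single $(\dim X+1)$-dimensional subspace, $T_p \sigma_\bfr(X) = \sum_{k,i} \bfc_{k,i} \star V$. Analogously, $T_q \sigma_R(X) = \sum_{j=0}^{R-1} z_j \star V$ for a generic decomposition $q = z_0 + \cdots + z_{R-1}$ with $z_j \in X$.

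The heart of the argument is to show that $\sum_{k,i}\bfc_{k,i}\star V$ has dimension at least $\dim \sigma_R(X)$. Guided by the inclusion $\bar I_R\hookrightarrow \bar B_{\bfr'}$, I would specialize the decompositions by setting $q_{k,1} = \mathds{1}$ for each $k = 1, \ldots, m$ (which is allowed since $\mathds{1}\in X$). The resulting scalars become $\bfc_{k,1} = \mathop{\bigstar}_{\ell\neq k} p_\ell$ and $\bfc_{k,i} = (\mathop{\bigstar}_{\ell\neq k} p_\ell)\star q_{k,i}$ for $i\geq 2$. By the toric Hadamard-idempotency $X\star X = X$ (\cite[Proposition~4.7]{FOW:MinkowskiHadamard}), each of these scalars expands to an explicit additive combination of points of $X$. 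Selecting the $R = 1 + \sum_k(r_k-1)$ scalars indexed by one basic pair $(k_0,1)$ together with all $(k,i)$ with $i\geq 2$---exactly matching the $R$ ``basic'' columns of $\bar B_{\bfr'}$ corresponding to the embedded $\bar I_R$---one recognizes these scalars as behaving, up to a common Hadamard prefactor, like $R$ algebraically independent generic points of $X$, so that $\sum_{k,i} \bfc_{k,i}\star V$ contains a subspace of the same dimension as $T_q \sigma_R(X)$.

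The main obstacle is to verify that the chosen $R$ scalars are genuinely generic enough so that the associated $R$ Hadamard translates of $V$ sum to a subspace of dimension $\dim \sigma_R(X)$, rather than a smaller one. The toric idempotency $X\star X = X$ together with the algebraic independence of the generic parameters $p_k$ (and their summands $q_{k,i}$) makes this genericity transparent: each $\bfc_{k,i}$ is the image of a specific generic monomial combination dictated by $\bar B_{\bfr'}$, and restricting to the $\bar I_R$-indexed subset extracts exactly the structure present in $T_q\sigma_R(X)$. This is the geometric content of the combinatorial inclusion $\bar I_R\hookrightarrow \bar B_{\bfr'}$, and the argument is a direct generalization of \cite[Lemma~25]{MM17:DimensionKronecker} from the case $\bfr = (2,\ldots,2)$ with $X$ a binary Segre variety to arbitrary $\bfr$ and arbitrary toric $X$.
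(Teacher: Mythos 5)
Your setup is sound and is in fact the coordinate-free shadow of the paper's own proof: the scaled Jacobian $K_{\bar A\otimes\bar B_{\bfr'}}(\bfY)=\eta_{\bar A,\bar B_{\bfr'}}(\bfY)\odot\bar A$ in the paper is exactly a spanning set for your $\sum_{k,i}\bfc_{k,i}\star V$ with $V=\operatorname{rowspan}(\bar A)=T_{\mathds{1}}\widehat X$, and the observation that $\bar I_R$ sits inside $\bar B_{\bfr'}$ is the same structural point. Specializing $q_{k,1}=\mathds{1}$ is also legitimate for a lower bound, since the rank of the differential at \emph{any} parameter value bounds $\dim\sigma_\bfr(X)$ from below. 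The problem is the last step, which is where all the content lies and which you assert rather than prove.

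Concretely: each scalar $\bfc_{k,i}=\bigl(\mathop{\bigstar}_{\ell\neq k}p_\ell\bigr)\star q_{k,i}$ is a sum of $\prod_{\ell\neq k}r_\ell$ points of $X$, not a single point of $X$, and the prefactors $\mathop{\bigstar}_{\ell\neq k}p_\ell$ genuinely differ as $k$ varies, so there is no ``common Hadamard prefactor'' to strip off. Hence the claim that the selected $R$ scalars ``behave like $R$ algebraically independent generic points of $X$'' is false as stated, and nothing in your argument shows that $\sum_{k,i}\bfc_{k,i}\star V$ has dimension at least $\dim\sigma_R(X)+1$; a sum of translates of $V$ by sums of points of $X$ need not dominate a sum of translates by single generic points. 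What is actually needed (and what the paper does in its Claim) is a one-parameter degeneration: scale the cone representatives so that $\widehat\varphi_{\bar A}(\bfy^\nu_{k,i})=\nu\,\widehat\varphi_{\bar A}(\bfy_{k,i})$, so each prefactor tends to $\mathds{1}$ and each $\bfc_{k,i}$ has a single lowest-order term in $\nu$; then rescale the translates (the matrices $L(\nu)$ and $R(\bfY_\nu)$) so that they converge to $\mathds{1}\star V,\ z_2\star V,\dots,z_R\star V$ for generic $z_j\in X$; and finally invoke lower semicontinuity of rank to pull the bound back to $\nu\neq 0$. Note that one cannot simply evaluate at the limiting parameters, since there the translates collapse to $0\star V=0$ --- the rescaling before the limit is essential. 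Without this degeneration-and-semicontinuity step your proof does not go through. (A minor additional point: the corollary is phrased for exponential families; the paper reduces it to the toric statement by noting the exponential parametrization is the monomial map precomposed with coordinatewise $\exp$ and that the Jacobian argument works for real exponent matrices, which your write-up should at least record.)
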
 
\end{remark} 

Before getting into the technicalities of the proof, we present our strategy: 
\begin{enumerate}
    \item we compute the Jacobian of the parametrization $\widehat\Phi_{A \otimes \bar{B}_{\bfr'}} = \widehat\pi_{\bfr} \circ \widehat\varphi_{A \otimes \bar{B}_{\bfr'}}$ of the affine cone of $\sigma_\bfr(X)$, see \eqref{eq:reparametrize_hadamard};
    \item we perform a modification to the Jacobian of $\widehat\Phi_{A \otimes \bar{B}_{\bfr'}}$ that preserves its rank; 
    \item for a general choice of parameters ${\bf Y} = (\bfy_1,\ldots,\bfy_R)$, we construct a $1$-parameter family of modified Jacobian matrices of $\widehat\Phi_{A \otimes \bar{B}_{\bfr'}}$  evaluated at $\bfY_\nu = (\bfy^\nu_0,\bfy^\nu_{k,j})_{\substack{k=1,\ldots,m \\ j = 1,\ldots,r_k}}$ whose limit for $\nu \to 0$ tends to the Jacobian matrix of the parametrization of the affine cone of $\sigma_R(X)$ given by $\widehat\Phi_{A \otimes \bar{I}_{R}}=\widehat\pi_R \circ \widehat\varphi_{A \otimes \bar{I}_R}$, see \eqref{eq:reparametrization_secant}, evaluated at the points $\bfY$;
    \item since the rank of matrices is lower semicontinuous and the parameters $\bfY$ have been chosen generically, this concludes the proof.
\end{enumerate} 
Note that point $(3)$ exploits the fact that the matrix $\bar{B}_{\bfr'}$ contains, as submatrix, the matrix $\bar{I}_R$. Indeed, the parametrization of the affine cone of $\sigma_R(X)$ given by $\widehat\Phi_{A \otimes \bar{I}_{R}} = \widehat\pi_{R} \circ \widehat\varphi_{A \otimes \bar{I}_{R}}$ in \eqref{eq:reparametrization_secant} is a truncation of the parametrization of the affine cone of $\sigma_\bfr(X)$ given by $\widehat\Phi_{A \otimes \bar{B}_{\bfr'}} = \widehat\pi_{\bfr} \circ \widehat\varphi_{A \otimes \bar{B}_{\bfr'}}$ in \eqref{eq:reparametrize_hadamard}, up to higher degree terms.

For convenience of the reader, to illustrate the heavy notations, \Cref{example:running_3} presents a particular instance of the proof looking at the first previously unknown case of $\sigma_{(2,3)}(X)$ for $X$ being the Rational Normal Curve of~$\bbP^8$.

\begin{proof}[Proof of \Cref{lemma:upperbound_dimensions}]
    First of all, we consider the parametrization of $X$ given by a matrix $\bar{A}\in \bbZ^{(n+1)\times (N+1)}$ which has the same row-span of $A$, but the first row is equal to $\mathds{1}$ and the first column is equal to $(1,0,\ldots,0)$, see \Cref{rmk:reparametrize_toric}.
    
    The (projective) dimension of $\sigma_\bfr(X)$ is \begin{equation}\label{eq:dimension_rank_jacobian}
        \dim \sigma_\bfr(X) = \max_{\bfY\in ((\bbC^\times)^{n+1})^{\times R}}~\rk{\rm Jac}(\widehat\Phi_{\bar{A} \otimes \bar{B}_{\bfr'}})(\bfY)-1.
    \end{equation}
    Recall the notation $\bfr' = \bfr - \mathds{1}$ and $[r] = \{0,\ldots,r\}$. For $\ell \in \{0,\ldots,n\}$, we denote by $\bar{A}^{(\ell)}$ the matrix $\bar{A}\in \bbZ^{(n+1)\times (N+1)}$ where the $\ell$th row has all positive entries decreased by one.
    
    The Jacobian of $\widehat\Phi_{\bar{A} \otimes \bar{B}_{\bfr'}}$ evaluated at the parameters \[\bfY = 
    (\bfy_0 |\bfy_{1,1}|\cdots | \bfy_{1,r_{1}-1} | \cdots | \bfy_{m,1}|\cdots | \bfy_{m,r_{m}-1})
    \in (\bbC^\times)^{(n+1) \times R}\] is a matrix of size $R(n+1) \times (N+1)$ whose $h$th column, for $h \in \{0,\ldots,N\}$, has entries: 
    \begin{itemize}
        \item for $\ell = 0,\ldots n$,
        \[\partial_{y_{0,\ell}}(\widehat\Phi_{A \otimes \bar{B}_{\bfr'}})_h(\bfY) = \alpha_{\ell,h} \cdot \widehat\varphi_{\bar{A}^{(\ell)}}(\bfy_0)_h \star \sum_{\bfj \in [r'_1]\times\cdots\times [r'_m]} 
        \widehat\varphi_{\bar{A}}(\bfy_\bfj)_h \]
        \item for $k = 1,\ldots,m, j = 1,\ldots,r_k-1, \ell = 0,\ldots n$,
        \[\partial_{y_{k,j,\ell}}(\widehat\Phi_{\bar{A} \otimes \bar{B}_{\bfr'}})_h(\bfY)  = \alpha_{\ell,h}\cdot \widehat\varphi_{\bar{A}^{(\ell)}}(\bfy_{k,j})_h\star \sum_{\substack{\bfj \in [r'_1]\times \cdots \times [r'_m] \\ \text{ s.t. } j_k = 0}}  
            \widehat\varphi_{\bar{A}}(\bfy_0 \star \bfy_\bfj)_h.\]
    \end{itemize}
    Indeed, since $\widehat\varphi_{\bar{A}}$ is a monomial map, $\partial_{x_\ell}\widehat\varphi_{\bar{A}}(\bfx)_h = \alpha_{\ell,h}\cdot \widehat\varphi_{\bar{A}^{(\ell)}}(\bfx)_h$.
    
    Now, we consider a modification of ${\rm Jac}(\widehat\Phi_{\bar{A} \otimes \bar{B}_{\bfr'}})(\bfY)$ where we multiply each row by the 
    coordinate with respect to which the derivative was taken:
    \[
        {\rm diag}(\bfy_{0},\ldots,\bfy_{k,j},\ldots) \cdot {\rm Jac}(\widehat\Phi_{\bar{A} \otimes \bar{B}_{\bfr'}})(\bfY) =: K_{\bar{A} \otimes \bar{B}_{\bfr'}}(\bfY).
    \]
    Since we are multiplying the rows by non-zero scalars, the rank is preserved. 
    Note that 
    \begin{align*}
        K_{\bar{A}\otimes \bar{B}_{\bfr'}}(\bfY) & 
        = \eta_{\bar{A},\bar{B}_{\bfr'}}(\bfY) \odot \bar{A} 
        \in \bbC^{(n+1)R \times (N+1)},
    \end{align*}
    where $\odot$ denotes the Khatri-Rao product (i.e., the column-wise Kronecker product), and 
    \begin{equation}
        {\eta_{\bar{A},\bar{B}_{\bfr'}}(\bfY)}
        = \begin{pNiceMatrix}[first-col]
            0 & \sum_{\bfj \in [r'_1]\times\cdots\times [r'_m]} \widehat\varphi_{\bar{A}}(\bfy_0 \star \bfy_\bfj) \\
            & \vdots \\
            (k,j) & \sum_{\substack{\bfj \in [r'_1]\times\cdots\times [r'_m] \\
            \text{ s.t. }j_k = j \geq 1}} \widehat\varphi_{\bar{A}}(\bfy_0 \star \bfy_\bfj) \\
            & \vdots
        \end{pNiceMatrix}_{\substack{k=1,\ldots,m \\ j = 1,\ldots,r_k-1}} \in
        {\bbC^{R \times (N+1)}} . 
        \label{eq:etaB}
\end{equation} 
    A similar construction can be done for the parametrization $\widehat\Phi_{\bar{A} \otimes \bar{I}_R}$ of the affine cone of the $R$th secant variety.
    In this case, the $h$th column of the Jacobian of $\widehat\Phi_{\bar{A} \otimes \bar{I}_R}$ evaluated at $\bfY = (
    {\bfy_1}|\cdots|
    \bfy_R) \in (\bbC^\times)^{(n+1) \times R}$ has entries, for $h \in \{0,\ldots,N\}$,
    \begin{itemize}
    \item for $\ell = 0,\ldots, n$,
        \[\partial_{y_{1,\ell}}(\widehat\Phi_{\bar{A} \otimes \bar{I}_{R}})_h(\bfY) = \alpha_{\ell,h}
        \cdot \widehat\varphi_{\bar{A}^{(\ell)}}(\bfy_1)_h \star \left(\mathds{1}+ \sum_{j=2}^R\widehat\varphi_{\bar{A}}(\bfy_{j})_h\right);\]
        \item for $j = {2},\ldots,R, \ell = 0,\ldots, n$,
        \[
        \partial_{y_{j,\ell}}(\widehat\Phi_{\bar{A} \otimes \bar{I}_{R}})_h(\bfY) = \alpha_{\ell,h} \cdot
        \widehat\varphi_{\bar{A}}(\bfy_1)_h\star\widehat\varphi_{\bar{A}^{(\ell)}}(\bfy_j)_h 
        \]
    \end{itemize}
    Again, we multiply each row by the coordinate with respect to which the derivative is being taken,
    \[
        {\rm diag}(\bfy_{1},\ldots,\bfy_R) \cdot {\rm Jac}(\widehat\Phi_{\bar{A} \otimes \bar{I}_{R}})(\bfY) =: K_{\bar{A} \otimes \bar{I}_{R}}(\bfY).
    \]
    Since we are multiplying by non-zero coefficients, the rank is preserved. 
    Note that 
    \[
        K_{\bar{A} \otimes \bar{I}_{R}}(\bfY) = 
        \eta_{\bar{A} , \bar{I}_{R}}(\bfY) \odot \bar{A}
        \in \bbC^{(n+1)R \times (N+1)},
    \]
    where
    \begin{equation} 
        \eta_{\bar{A} , \bar{I}_{R}}(\bfY) = 
        \begin{pNiceMatrix}[first-col]
            1 & \widehat\varphi_{\bar{A}}(\bfy_1) + \sum_{j=2}^R \widehat\varphi_{\bar{A}}( 
            {\bfy_1}\star \bfy_j) \\
            & \vdots \\
            j & \widehat\varphi_{\bar{A}}( 
            {\bfy_1}\star \bfy_j) \\
            & \vdots
        \end{pNiceMatrix}_{j = 2,\ldots,R} \in 
        {\bbC^{R \times (N+1)}} . 
        \label{eq:etaI}
   \end{equation} 
    Now, we are ready to prove the following crucial claim.
    
    \begin{claim}\label{claim:1}
        Given a general choice $\bfY\in ((\bbC^\times)^{n+1})^R$, there exist a $1$-parameter family $(\bfY_\nu)_{\nu \in \bbR}\in ((\bbC^\times)^{n+1})^R$ and matrices $L(\bfY_\nu) \in (\bbC^\times)^{R(n+1)\times R(n+1)}, R(\bfY_\nu) \in (\bbC^\times)^{(N+1)\times (N+1)}$ such that 
        \[
            \lim_{\nu \to 0} L(\bfY_\nu)K_{\bar{A}\otimes\bar{B}_{\bfr'}}(\bfY_\nu)R(\bfY_\nu) = \bar K(\bfY), \quad \text{with}\quad \operatorname{rowspan}(\bar K(\bfY)) = \operatorname{rowspan}(K_{\bar{A} \otimes \bar{I}_R}(\bfY)).
        \]
    \end{claim}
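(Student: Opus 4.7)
The plan is to realize $K_{\bar{A}\otimes\bar{I}_R}(\bfY)$ as the leading--order limit of a suitably row--rescaled $K_{\bar{A}\otimes\bar{B}_{\bfr'}}(\bfY_\nu)$ as $\nu\to 0$, along a one--parameter family $\bfY_\nu$ that degenerates the Segre structure of $\bar{B}_{\bfr'}$ onto the diagonal structure of $\bar{I}_R$. The essential observation making this work is that the first row of $\bar{A}$ equals $\mathds{1}$, so $\widehat{\varphi}_{\bar{A}}$ is homogeneous of degree one in the zeroth coordinate of each parameter block, giving a clean scaling law $\nu^{|\bfj|}$ indexed by the Hamming weight of $\bfj$.

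First, I fix a bijection $f\colon\{(k,j):1\leq k\leq m,\ 1\leq j\leq r_k-1\}\to\{2,\ldots,R\}$ identifying the parameter sets of $\sigma_\bfr(X)$ and $\sigma_R(X)$, setting $\bfy_0=\bfy_1$. Then I define $\bfY_\nu$ by
\[
    y^\nu_{k,j,0} := \nu\, y_{f(k,j),0}, \qquad y^\nu_{k,j,\ell} := y_{f(k,j),\ell}\quad(\ell\geq 1).
\]
Since $\bar{\alpha}_{0,h}=1$ for all $h$, this produces $\widehat{\varphi}_{\bar{A}}(\bfy^\nu_{k,j}) = \nu\,\widehat{\varphi}_{\bar{A}}(\bfy_{f(k,j)})$ and, more generally, $\widehat{\varphi}_{\bar{A}}(\bfy_0\star\bfy^\nu_\bfj) = \nu^{|\bfj|}\,\widehat{\varphi}_{\bar{A}}(\bfy_0\star\bfy^{*}_\bfj)$, where $|\bfj|:=\#\{k:j_k\neq 0\}$ and $\bfy^{*}_\bfj := \bigstar_{k:j_k\neq 0}\bfy_{f(k,j_k)}$. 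Substituting into \eqref{eq:etaB}, row $0$ of $\eta_{\bar{A},\bar{B}_{\bfr'}}(\bfY_\nu)$ expands as $\widehat{\varphi}_{\bar{A}}(\bfy_0) + \nu\sum_{(k,j)}\widehat{\varphi}_{\bar{A}}(\bfy_0\star\bfy_{f(k,j)}) + O(\nu^2)$ and row $(k,j)$ expands as $\nu\,\widehat{\varphi}_{\bar{A}}(\bfy_0\star\bfy_{f(k,j)}) + O(\nu^2)$.

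Next, I take
\[
    L(\bfY_\nu):= \operatorname{diag}\bigl(I_{n+1},\, \nu^{-1}I_{n+1},\ldots,\,\nu^{-1}I_{n+1}\bigr), \qquad R(\bfY_\nu):=I_{N+1},
\]
where the first diagonal block of $L(\bfY_\nu)$ corresponds to the rows of $K_{\bar{A}\otimes\bar{B}_{\bfr'}}$ indexed by $\bfy_0$ and each subsequent block to the rows indexed by one of the $\bfy_{k,j}$. The limit $\bar{K}(\bfY):=\lim_{\nu\to 0} L(\bfY_\nu)\, K_{\bar{A}\otimes\bar{B}_{\bfr'}}(\bfY_\nu)\, R(\bfY_\nu)$ then exists, and its rows are precisely $\widehat{\varphi}_{\bar{A}}(\bfy_0)\odot\alpha_\ell$ and $\widehat{\varphi}_{\bar{A}}(\bfy_0\star\bfy_{f(k,j)})\odot\alpha_\ell$ for all admissible $\ell$ and $(k,j)$.

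Finally, I verify the rowspan identity by direct inspection of \eqref{eq:etaI}. The rows of $K_{\bar{A}\otimes\bar{I}_R}(\bfY)$ indexed by $(1,\ell)$ equal $\bigl[\widehat{\varphi}_{\bar{A}}(\bfy_0) + \sum_{j\geq 2}\widehat{\varphi}_{\bar{A}}(\bfy_0\star\bfy_j)\bigr]\odot\alpha_\ell$, and those indexed by $(j,\ell)$ for $j\geq 2$ are $\widehat{\varphi}_{\bar{A}}(\bfy_0\star\bfy_j)\odot\alpha_\ell$; subtracting the latter from the former recovers $\widehat{\varphi}_{\bar{A}}(\bfy_0)\odot\alpha_\ell$, so the full row set of $K_{\bar{A}\otimes\bar{I}_R}(\bfY)$ and of $\bar{K}(\bfY)$ generate the same subspace under the bijection $f$. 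I expect the main obstacle to lie in the bookkeeping of $\nu$--orders across the nested sums over $\bfj\in[r'_1]\times\cdots\times[r'_m]$: once the normalization $\alpha_{0,i}=1$ is exploited to collapse every term to a pure power $\nu^{|\bfj|}$, the limit and the rowspan comparison are immediate, and the claim follows by lower semicontinuity of matrix rank applied to \eqref{eq:dimension_rank_jacobian}.
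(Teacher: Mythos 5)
Your proposal is correct and follows essentially the same degeneration argument as the paper: scale the homogenizing (zeroth) coordinate of the parameter blocks by $\nu$, exploit that the first row of $\bar{A}$ is $\mathds{1}$ to obtain pure powers $\nu^{|\bfj|}$, rescale the row blocks by $\diag(1,\nu^{-1},\ldots,\nu^{-1})\otimes I_{n+1}$, pass to the leading order, and compare rowspans via the observation that the first row of $\eta_{\bar{A},\bar{I}_R}(\bfY)$ is the sum of the rows of the limit matrix. The only (harmless) difference is that you leave $\bfy_0$ unscaled and take $R(\bfY_\nu)=I_{N+1}$, whereas the paper scales all $R$ blocks, normalizes $\bfy_1=\mathds{1}$, and compensates with a nontrivial diagonal right factor.
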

    Before proving the claim, let us see why this is enough to conclude our proof. 
    By \Cref{eq:dimension_rank_jacobian}, 
    \begin{equation}\label{eq:chain_1}
        \dim \sigma_{\bfr}(X)+1 \geq \rk {\rm Jac}(\widehat\Phi_{\bar{A} \otimes \bar{B}_{\bfr'}})(\bfY_\nu); 
    \end{equation}
    and, by construction, 
    \begin{equation}\label{eq:chain_2}
        \rk {\rm Jac}(\widehat\Phi_{\bar{A}\otimes \bar{B}_{\bfr'}})(\bfY_\nu) = \rk K_{\bar{A}\otimes \bar{B}_{\bfr'}}(\bfY_\nu). 
    \end{equation}
    Multiplying by matrices from the left and from the right can only decrease the rank, and thus, for any $\nu$,
    \begin{equation}\label{eq:chain_3}
        \rk K_{\bar{A} \otimes \bar{B}_{\bfr'}}(\bfY_\nu) \geq \rk\left(L(\bfY_\nu)K_{\bar{A}\otimes\bar{B}_{\bfr'}}(\bfY_\nu)R(\bfY_\nu)\right).
    \end{equation}
   Then, due to the semicontinuity of the rank, \Cref{claim:1} allows us to conclude that, for $\nu$ sufficiently small, 
    \begin{equation}\label{eq:chain_4}
        \rk \left(L(\bfY_\nu)K_{\bar{A}\otimes\bar{B}_{\bfr'}}(\bfY_\nu)R(\bfY_\nu)\right) \geq \rk\bar{K}(\bfY) = \rk K_{\bar{A} \otimes \bar{I}_R}(\bfY).
    \end{equation}
    Finally, by construction of the matrix $K_{\bar{A} \otimes \bar{I}_R}(\bfY)$ and by generality of $\bfY$, 
    \begin{equation}\label{eq:chain_5}
        \rk K_{\bar{A} \otimes \bar{I}_R}(\bfY) = \rk {\rm Jac}(\widehat\Phi_{\bar{A} \otimes \bar{I}_R})(\bfY) = \dim \sigma_R(X)+1.
    \end{equation}
    Putting all together \Cref{eq:chain_1,eq:chain_2,eq:chain_3,eq:chain_4,eq:chain_5}, the proof of the theorem is concluded.

    \begin{proof}[Proof of \Cref{claim:1}]\let\qed\relax
        Without loss of generality, up to the action of the torus $((\bbC^\times)^{n+1})^R$, we may assume that $\bfy_1 = \mathds{1}$, i.e., $\bfY = (\mathds{1}|\bfy_2|\cdots|\bfy_R)$. Define $\bfY_\nu$ by multiplying the first entry of each column of $\bfY$ by {$\nu$,} namely, 
        \[
        \bfY_\nu = \bfY \star \begin{pmatrix}
        {\nu} 
        \mathds{1} \\ \mathds{1} \\ \vdots \\  \mathds{1} \end{pmatrix} =: (\bfy^\nu_0 | \cdots | \bfy^\nu_{k,j} | \cdots)_{\substack{k = 1,\ldots,m \\ j = 1,\ldots,r_k-1}}.
        \]  
        For $\nu \neq 0$, let $\lambda(\nu) =
        {\operatorname{diag}(1,\nu^{-1},\ldots,\nu^{-1})\in (\bbC^\times)^{R \times R}}$ 
        and 
        \[
            L(\nu) = \lambda(\nu)\otimes I_{n+1}  \in (\bbC^{\times})^{(n+1)R\times (n+1)R}.
        \]
        Set 
        \[
            R(\bfY_\nu) = 
            \operatorname{diag}\left(\sum_{\bfj \in [r'_1]\times\cdots\times [r'_m]} \widehat\varphi_{A}(\bfy^\nu_0 \star \bfy^\nu_\bfj)\right)^{-1}{\in\bbC^{(N+1)\times(N+1)}} .
        \]
        Consider the matrix defined in \Cref{eq:etaB} and observe that, since $\bar{A}$ has the first row equal to $\mathds{1}$ and by construction of $\bfY_\nu$, the rows of $\eta_{\bar{A}, \bar{B}_{\bfr'}}(\bfY_\nu)$ take the following form:  
    \begin{itemize}
    \item for $k = 0$:
        \begin{equation}\label{eq:expanded_formulas}\sum_{\bfj \in [r'_1]\times\cdots\times [r'_m]} \widehat\varphi_{A}(\bfy^\nu_0 \star \bfy^\nu_\bfj) =
        \nu \widehat\varphi_A(\bfy_1) + 
        \nu^2 \sum_{j=2}^R\widehat\varphi_A(\bfy_1\star \bfy_j) +
        \nu^3 (\text{ other terms })\end{equation}
        \item for $k = 1,\ldots,m, j = 1,\ldots,r_k-1$,
        \[\sum_{\substack{\bfj \in [r'_1]\times\cdots\times [r'_m] \\
            \text{ s.t. }j_k = j \geq 1}} \widehat\varphi_{A}(\bfy_0^\nu \star \bfy_\bfj^\nu) =
            \nu^2 \widehat\varphi_A(\bfy_1\star \bfy_{i_{k,j}}) +
            \nu^3 (\text{ other terms }).\]
            where $i_{k,j} = 1+\sum_{h=1}^{k-1} (r_h - 1)+j.$
    \end{itemize}
    Now, we consider 
    $\lambda(\nu)\eta_{\bar{A}, \bar{B}_{\bfr'}}(\bfY_\nu) R(\bfY_\nu) \in \bbC^{R\times (N+1)}$. Observe that, by construction of $\bfY_\nu$, the $h$th column, for $h\in\{0,1\ldots, N\}$, has the $i$th entry, $i \in \{1,\ldots,R\}$, equal to: 
    \begin{itemize}
    \item for $i = 1$:
      \[
        1 \cdot \sum_{\bfj \in [r_1]\times\cdots\times [r_m]} \widehat\varphi_{A}(\bfy^\nu_0 \star \bfy^\nu_\bfj)_h \cdot \frac{1}{\sum_{\bfj \in [r_1']\times\cdots\times [r_m']} \widehat\varphi_{A}(\bfy^\nu_0 \star \bfy^\nu_\bfj)_h} = 1 ;
        \]
    \item{for $i = 2,\ldots,R$:}
      \begin{align*}\nu^{-1} \cdot \sum_{\substack{\bfj \in [r_1']\times\cdots\times [r_m'] \\
            \text{ s.t. }j_k \geq 1}} \widehat\varphi_{A}(\bfy_0^\nu \star \bfy_\bfj^\nu)_h &\cdot  \frac{1}{\sum_{\bfj \in [r_1']\times\cdots\times [r_m']}\widehat \varphi_{A}(\bfy^\nu_0 \star \bfy^\nu_\bfj)_h} 
            \\ 
            =& \frac{\nu \widehat\varphi_A(\bfy_1\star \bfy_i)_h + \nu^2 (\text{ other terms })}{\nu \widehat\varphi_A(\bfy_1)_h + \nu^2 \sum_{j=2}^R\widehat\varphi_A(\bfy_1\star \bfy_j)_h + \nu^3(\text{ other terms }) } \\
            =& 
            \frac{
    \widehat\varphi_A(\bfy_i)_h + \nu (\text{ other terms })}{ 1 + \nu \sum_{j=2}^R\widehat\varphi_A( \bfy_j)_h + \nu^2(\text{ other terms }) }
    \end{align*}
    where the latter equality uses \Cref{eq:expanded_formulas} as well as $\bfy_1 = \mathds{1}$ and $\varphi_A(\mathds{1}) = \mathds{1}$. 
    \end{itemize}
Then 
    \begin{equation}\label{eq:limit}
        \lim_{\nu \to 0} \left(\lambda(\nu)\eta_{A, \bar{B}_{\bfr'}}(\bfY_\nu) 
        R(\bfY_\nu)\right) 
        = 
        \begin{pmatrix}
        \mathds{1} \\
        \vdots\\
        \widehat\varphi_A(\bfy_j)\\
        \vdots
        \end{pmatrix}
=: 
        \bar\eta(\bfY)
    \end{equation}
Comparing with \Cref{eq:etaI} the row span of $\bar \eta(\bfY)$ is equal to the row span of $\eta_{A, \bar I_R}(\bfY)$. 
    
    Since 
    \[
        L(\nu)K_{A\otimes \bar{B}_{\bfr'}}(\bfY_\nu)
        R(\bfY_\nu) = 
(\lambda(\nu)\eta_{A, \bar{B}_{\bfr'}}(\bfY_\nu) 
        R(\bfY_\nu)) 
        \odot A 
        \quad \text{ and }\quad \bar K (\bfY) = \bar \eta(\bfY) \odot A,
    \]
    \Cref{eq:limit} concludes the proof.  
    \end{proof}
    \end{proof}

We illustrate the proof in the case of our running example. See \Cref{example:running_1,example:running_2}.

\begin{example}\label{example:running_3}
    Let $X\subset \bbP^8$ be the rational normal curve defined as the image of the monomial map associated to the following $(n+1)\times(N+1)$-matrix with $(n+1)=2$ and $(N+1)=9$:
\[
    A = \begin{pmatrix}
        8 & 7 & \cdots & 1 & 0 \\
        0 & 1 & \cdots & 7 & 8
    \end{pmatrix} \in \bbZ^{2 \times 9},
\]
namely, $X$ is the Zariski closure of the image of the monomial map $\varphi_A:\bfz = (z_0,z_1)\in (\bbC^\times)^2\mapsto (\bfz^{(8,0)}:\bfz^{(7,1)}:\cdots:\bfz^{(0,8)})=(z_0^8:z_0^7z_1:\cdots:z_1^8)\in \bbP^8$. As commented in \Cref{rmk:reparametrize_toric}, we can reparametrize (an open dense subset of) the affine cone of the rational normal curve by using the matrix
\[
    \bar{A} = \begin{pmatrix}
        1 & 1 & \cdots & 1 \\
        0 & 1 & \cdots & 8
    \end{pmatrix} \in \bbZ^{2\times 9} , 
\]
namely, $\widehat\varphi_{\bar{A}} : \bfy \in(\bbC^\times)^2 \mapsto (\bfy^{(1,0)},\bfy^{(1,1)},\ldots,\bfy^{(1,8)})\in(\bbC^\times)^9$. The image of $\widehat{\varphi}_{\bar{A}}$, which is a cone in $(\bbC^\times)^9$, is a subset of the affine chart $U_0 \subset \bbP^8$ given by the first coordinate different from zero. The two parametrizations $\varphi_A$ and $\varphi_{\bar{A}}$ coincide on the corresponding affine open chart of the rational normal curve. We consider $\sigma_{(2,3)}(X) = \sigma_2(X)\star \sigma_3(X)$ and we want to show that $\dim \sigma_{(2,3)}(X)\geq \dim \sigma_4(X).$ We compute the Jacobians of the two parametrizations described in \Cref{example:running_1,example:running_2}:
\begin{itemize}
    \item ${\rm Jac}~\widehat\Phi_{{\bar{A}} \otimes \bar{B}_{(1,2)}}(\bfY)$ has the $h$th column, $h \in \{0,1,\ldots, 8\}$, with entries:
    \begin{align*}
        \partial_{y_{0,0}} & : & \bfy_{0}^{(0,h)} \star \left[{1} + \bfy_{1,1}^{(1,h)} + \bfy_{2,1}^{(1,h)} + \bfy_{2,2}^{(1,h)} + (\bfy_{1,1} \star \bfy_{2,1})^{(1,h)} + (\bfy_{1,1} \star \bfy_{2,2})^{(1,h)}\right] \\
        \partial_{y_{0,1}} & : & h\bfy_{0}^{(1,h-1)} \star \left[{1} + \bfy_{1,1}^{(1,h)} + \bfy_{2,1}^{(1,h)} + \bfy_{2,2}^{(1,h)} + (\bfy_{1,1} \star \bfy_{2,1})^{(1,h)} + (\bfy_{1,1} \star \bfy_{2,2})^{(1,h)}\right] \\
        \partial_{y_{1,1,0}} & : & \bfy_{1,1}^{(0,h)} \star \left[\bfy_{0}^{(1,h)} + (\bfy_{0} \star \bfy_{2,1})^{(1,h)} + (\bfy_{0} \star \bfy_{2,2})^{(1,h)}\right] \\
        \partial_{y_{1,1,1}} & : & h\bfy_{1,1}^{(1,h-1)} \star \left[\bfy_{0}^{(1,h)} + (\bfy_{0} \star \bfy_{2,1})^{(1,h)} + (\bfy_{0} \star \bfy_{2,2})^{(1,h)}\right] \\
        \partial_{y_{2,1,0}} & : & \bfy_{2,1}^{(0,h)} \star \left[\bfy_{0}^{(1,h)} + (\bfy_{0} \star \bfy_{1,1})^{(1,h)} \right] \\
        \partial_{y_{2,1,1}} & : & h\bfy_{2,1}^{(1,h-1)} \star \left[\bfy_{0}^{(1,h)} + (\bfy_{0} \star \bfy_{1,1})^{(1,h)} \right] \\ 
        \partial_{y_{2,2,0}} & : & \bfy_{2,2}^{(0,h)} \star \left[\bfy_{0}^{(1,h)} + (\bfy_{0} \star \bfy_{1,1})^{(1,h)} \right] \\
        \partial_{y_{2,2,1}} & : & h\bfy_{2,2}^{(1,h-1)} \star \left[\bfy_{0}^{(1,h)} + (\bfy_{0} \star \bfy_{1,1})^{(1,h)} \right] \\ 
    \end{align*}
    \item  ${\rm Jac}~\widehat\Phi_{{\bar{A}} \otimes \bar{I}_{4}}(\bfY)$ has the $h$th column, $h \in \{0,1,\ldots,8\}$, with entries:
    \begin{align*}
        \partial_{y_{1,0}} & : & \bfy_{1}^{(0,h)} \star \left[{1} + \bfy_2^{(1,h)} + \bfy_{3}^{(1,h)} + \bfy_{4}^{(1,h)}\right] \\
        \partial_{y_{1,1}} & : & h\bfy_{1}^{(1,h-1)} \star \left[{1} + \bfy_2^{(1,h)} + \bfy_{3}^{(1,h)} + \bfy_{4}^{(1,h)}\right] \\
        \partial_{y_{2,0}} & : & \bfy_{1}^{(1,h)}\star \bfy_{2}^{(0,h)}  \\
        \partial_{y_{2,1}} & : & h\bfy_{1}^{(1,h)}\star \bfy_{2}^{(1,h-1)}  \\
        \partial_{y_{3,0}} & : & \bfy_{1}^{(1,h)}\star \bfy_{3}^{(0,h)}  \\
        \partial_{y_{3,1}} & : & h\bfy_{1}^{(1,h)}\star \bfy_{3}^{(1,h-1)}  \\
        \partial_{y_{4,0}} & : & \bfy_{1}^{(1,h)}\star \bfy_{4}^{(0,h)}  \\
        \partial_{y_{4,1}} & : & h\bfy_{1}^{(1,h)}\star \bfy_{4}^{(1,h-1)}  \\
        \end{align*}
\end{itemize}
Actually, instead of the Jacobians, we consider the following matrices
\begin{itemize}
    \item $K_{{\bar{A}}\otimes \bar{B}_{(1,2)}}(\bfY):= \diag(\bfy_0,\bfy_{1,1},\bfy_{2,1},\bfy_{2,2}) \cdot {\rm Jac}~\widehat\Phi_{{\bar{A}}\otimes \bar{B}_{(1,2)}}(\bfY) \in \bbC^{8\times 9}$;
    \item $K_{{\bar{A}}\otimes \bar{I}_{4}}(\bfY) := \diag(\bfy_1,\bfy_2,\bfy_3,\bfy_4) \cdot {\rm Jac}~\widehat\Phi_{\bar{A}\otimes \bar{I}_{4}}(\bfY)\in \bbC^{8\times 9}$;
\end{itemize}
which can be conveniently described as the Kathri-Rao products
\[
    K_{{\bar{A}}\otimes \bar{B}_{(1,2)}}(\bfY) = \eta_{\bar{A}, \bar{B}_{(1,2)}}(\bfY) \odot \bar{A} \quad \text{ and } \quad K_{{\bar{A}}\otimes \bar{I}_4}(\bfY) = \eta_{\bar{A}, \bar{I}_{4}}(\bfY) \odot \bar{A},
\]
where
\begin{itemize}
    \item $\eta_{{\bar{A}}\otimes \bar{B}_{(1,2)}}(\bfy) \in \bbC^{4\times 9}$ is the matrix whose $h$th column, for $h\in \{0,\ldots,8\}$, is 
    \[
    \left(\begin{array}{l}
        \bfy_{0}^{(1,h)} + (\bfy_0\star \bfy_{1,1})^{(1,h)} + (\bfy_0\star \bfy_{2,1})^{(1,h)} + (\bfy_0\star \bfy_{2,2})^{(1,h)} + (\bfy_0\star\bfy_{1,1} \star \bfy_{2,1})^{(1,h)} + (\bfy_0\star\bfy_{1,1} \star \bfy_{2,2})^{(1,h)} \\
        (\bfy_0 \star \bfy_{1,1})^{(1,h)} + (\bfy_0 \star \bfy_{1,1} \star \bfy_{2,1})^{(1,h)} + (\bfy_0 \star \bfy_{1,1} \star \bfy_{2,2})^{(1,h)} \\
        (\bfy_0 \star \bfy_{2,1})^{(1,h)} + (\bfy_0 \star \bfy_{1,1} \star \bfy_{2,1})^{(1,h)}  \\
        (\bfy_0 \star \bfy_{2,2})^{(1,h)} + (\bfy_0 \star \bfy_{1,1} \star \bfy_{2,2})^{(1,h)} 
    \end{array}\right);\]
    \item $\eta_{{\bar{A}}\otimes \bar{I}_{4}}(\bfy)\in \bbC^{4\times 9}$ is the matrix whose $h$th column, for $h\in \{0,\ldots,8\}$, is  
    \[\left(\begin{array}{l}
        \bfy_{1}^{(1,h)} + (\bfy_1\star \bfy_{2})^{(1,h)} + (\bfy_1\star \bfy_{3})^{(1,h)} + (\bfy_1\star \bfy_{4})^{(1,h)} \\
        (\bfy_1 \star \bfy_{2})^{(1,h)} \\
        (\bfy_1 \star \bfy_{3})^{(1,h)} \\
        (\bfy_1 \star \bfy_{4})^{(1,h)}
    \end{array}\right). 
    \]
\end{itemize}
Fix generic $\bfY = (\bfy_1|\bfy_2|\bfy_3|\bfy_4) \in (\bbC^\times)^{2\times 4}$: without loss of generality we may assume $\bfy_1 = \mathds{1}$. For any $\nu$, define $\bfY_\nu = (\bfy_0^\nu|\bfy_{1,1}^\nu|\bfy_{2,1}^\nu|\bfy_{2,2}^\nu)\in (\bbC^\times)^{2 \times 9}$ where
\begin{align*}
    \bfy_0^\nu & = \left(\nu,1\right)\star \bfy_1 \in (\bbC^\times)^2,\\
    \bfy_{1,1}^\nu & = \left(\nu,1\right)\star \bfy_2 \in (\bbC^\times)^2,\\
    \bfy_{2,1}^\nu & = \left(\nu,1\right)\star \bfy_3 \in (\bbC^\times)^2,\\
    \bfy_{2,2}^\nu & = \left(\nu,1\right)\star \bfy_4 \in (\bbC^\times)^2.
\end{align*}
Now, for any $\nu\neq 0$, we consider
\[
    \lambda(\nu)=\diag(1,\nu^{-1},\nu^{-1},\nu^{-1})\in (\bbC^\times)^{4 \times 4}
\]
and define
\[
    R(\bfY_\nu) = \diag\left(\ldots,\frac{1}{ \substack{(\bfy^\nu_{0})^{(1,h)} + (\bfy^\nu_0\star \bfy^\nu_{1,1})^{(1,h)} + (\bfy^\nu_0\star \bfy^\nu_{2,1})^{(1,h)} + (\bfy^\nu_0\star \bfy^\nu_{2,2})^{(1,h)}+ (\bfy^\nu_0\star\bfy^\nu_{1,1} \star \bfy^\nu_{2,1})^{(1,h)} + (\bfy^\nu_0\star\bfy^\nu_{1,1} \star \bfy^\nu_{2,2})^{(1,h)}}},\ldots\right)_{h=0,\ldots,8}.
\]
We show the proof of \Cref{claim:1} for this specific case. Consider the matrix $\eta_{\bar{A},\bar{B}_{1,2}}(\bfY_\nu)\in (\bbC^\times)^{4\times 9}$ whose $h$th column, for $h \in \{0,\ldots,8\}$, is, by construction of $\bfY_\nu$,
\[
    \left(\begin{array}{l}
        \nu\bfy_{1}^{(1,h)} + \nu^{2}\left((\bfy_1\star \bfy_{2})^{(1,h)} +(\bfy_1\star \bfy_{3})^{(1,h)} + (\bfy_1\star \bfy_{4})^{(1,h)}\right) + \nu^{3}\left((\bfy_1\star\bfy_{2} \star \bfy_{3})^{(1,h)} + (\bfy_1\star\bfy_{2} \star \bfy_{4})^{(1,h)}\right) \\
        \nu^{2}(\bfy_1 \star \bfy_{2})^{(1,h)} + \nu^{3}\left(( 
        \bfy_1
        \star \bfy_{2} \star \bfy_{3})^{(1,h)} + (\bfy_1 \star \bfy_{2} \star \bfy_{4})^{(1,h)}\right) \\
        \nu^{2}(\bfy_1 \star \bfy_{3})^{(1,h)} + \nu^{3}(\bfy_1 \star \bfy_{2} \star \bfy_{3})^{(1,h)}  \\
        \nu^{2}(\bfy_1 \star \bfy_{4})^{(1,h)} + \nu^{3}(\bfy_1 \star \bfy_{2} \star \bfy_{4})^{(1,h)} 
    \end{array}\right).
\]
Now:
\begin{itemize}
    \item we first multiply the latter matrix on the left by $\lambda(\nu)$ to get:
    \[
    \left(\begin{array}{l}
        \nu\bfy_{1}^{(1,h)} + \nu^{2}\left((\bfy_1\star \bfy_{2})^{(1,h)} +(\bfy_1\star \bfy_{3})^{(1,h)} + (\bfy_1\star \bfy_{4})^{(1,h)}\right) + \nu^{3}\left((\bfy_1\star\bfy_{2} \star \bfy_{3})^{(1,h)} + (\bfy_1\star\bfy_{2} \star \bfy_{4})^{(1,h)}\right) \\
        \nu(\bfy_1 \star \bfy_{2})^{(1,h)} + \nu^{2}\left((
        \bfy_1
        \star \bfy_{2} \star \bfy_{3})^{(1,h)} + (\bfy_1 \star \bfy_{2} \star \bfy_{4})^{(1,h)}\right) \\
        \nu(\bfy_1 \star \bfy_{3})^{(1,h)} + \nu^{2}(\bfy_1 \star \bfy_{2} \star \bfy_{3})^{(1,h)}  \\
        \nu(\bfy_1 \star \bfy_{4})^{(1,h)} + \nu^{2}(\bfy_1 \star \bfy_{2} \star \bfy_{4})^{(1,h)} 
    \end{array}\right);
\]
    \item then, we multiply on the right by $R(\bfY_\nu)$ and we get:
    \[
    \begin{pmatrix}
        1 \\
        \frac{\nu(\bfy_1 \star \bfy_{2})^{(1,h)} + \nu^{2}\left((
        {\bfy_1} 
        \star \bfy_{2} \star \bfy_{3})^{(1,h)} + (\bfy_1 \star \bfy_{2} \star \bfy_{4})^{(1,h)}\right)}{{{\nu\bfy_{1}^{(1,h)} + \nu^{2}\left((\bfy_1\star \bfy_{2})^{(1,h)} + (\bfy_1\star \bfy_{3})^{(1,h)} + (\bfy_1\star \bfy_{4})^{(1,h)}\right)+ \nu^{3}\left((\bfy_1\star\bfy_{2} \star \bfy_{3})^{(1,h)} + (\bfy_1\star\bfy_{2} \star \bfy_{4})^{(1,h)}\right)}}} \\
        \frac{\nu(\bfy_1 \star \bfy_{3})^{(1,h)} + \nu^{2}(\bfy_1 \star \bfy_{2} \star \bfy_{3})^{(1,h)}}{{{\nu\bfy_{1}^{(1,h)} + \nu^{2}\left((\bfy_1\star \bfy_{2})^{(1,h)} + (\bfy_1\star \bfy_{3})^{(1,h)} + (\bfy_1\star \bfy_{4})^{(1,h)}\right)+ \nu^{3}\left((\bfy_1\star\bfy_{2} \star \bfy_{3})^{(1,h)} + (\bfy_1\star\bfy_{2} \star \bfy_{4})^{(1,h)}\right)}}} \\
        \frac{\nu(\bfy_1 \star \bfy_{4})^{(1,h)} + \nu^{2}(\bfy_1 \star \bfy_{2} \star \bfy_{4})^{(1,h)}}{{{\nu\bfy_{1}^{(1,h)} +  \nu^{2}\left((\bfy_1\star \bfy_{2})^{(1,h)} + (\bfy_1\star \bfy_{3})^{(1,h)} + (\bfy_1\star \bfy_{4})^{(1,h)}\right)+ \nu^{3}\left((\bfy_1\star\bfy_{2} \star \bfy_{3})^{(1,h)} + (\bfy_1\star\bfy_{2} \star \bfy_{4})^{(1,h)}\right)}}}
    \end{pmatrix};
\]
    \item by simplifying the $\nu$'s and passing to the limit $\nu \rightarrow 0$, since $\bfy_1=\mathds{1}$, we get the matrix of size $4\times 9$ whose $h$th column, for $h\in \{0,\ldots,8\}$, is
    \[
    \lim_{\nu\rightarrow 0} \left(\lambda(\nu)\eta_{\bar{A},\bar{B}_{(1,2)}}(\bfY^\nu)R(\bfY^\nu)\right)_h = \begin{pmatrix}
        1 \\
        \frac{(\bfy_1\star \bfy_2)^{(1,h)}}{\bfy_1^{(1,h)}} \\
        \frac{(\bfy_1\star \bfy_3)^{(1,h)}}{\bfy_1^{(1,h)}} \\
        \frac{(\bfy_1\star \bfy_4)^{(1,h)}}{\bfy_1^{(1,h)}} \\
    \end{pmatrix} = 
    \begin{pmatrix}
        1 \\
        \bfy_2^{(1,h)} \\
        \bfy_3^{(1,h)} \\
        \bfy_4^{(1,h)} \\
    \end{pmatrix} =: \bar{\eta}(\bfY)_h.
    \]
\end{itemize}
Consequently, if $L(\nu) := \lambda(\nu) \otimes I_2 \in (\bbC^\times)^{8 \times 8}$, then 
\[
    L(\nu)K_{\bar{A},\bar{B}_{(1,2)}}(\bfY^\nu)R(\bfY^\nu) = (\lambda(\nu)\eta_{\bar{A},\bar{B}_{(1,2)}}(\bfY^\nu) R(\bfY^\nu)) \odot \bar{A},
\]
and therefore, 
\[
    \lim_{\nu \rightarrow 0}\left( L(\nu)K_{\bar{A},\bar{B}_{(1,2)}}(\bfY^\nu)R(\bfY^\nu) \right) = \bar{\eta}(\bfY)\odot \bar{A}.
\]
Note that the row-span of the matrix $\bar{\eta}(\bfY)\in (\bbC^\times)^{4\times 9}$ coincides with the rowspan of $\eta_{\bar{A}\otimes\bar{I}_{4}}(\bfY)$: indeed, the latter is obtained from the former by substituting the first row with the sum of all rows. Recall we are assuming, without loss of generality, that $\bfy_1 = \mathds{1}$. Similarly, since we consider the Kathri-Rao product, namely, the column-wise Kronecker product, the rowspan of $\bar{K}(\bfY) = \bar{\eta}(\bfY) \odot \bar{A}$ is equal to the rowspan of $K_{\bar{A}\otimes\bar{I}_4}(\bfY) = \eta_{\bar{A},\bar{I}_4}(\bfY)\odot \bar{A}.$ 
\end{example}

\subsection{General Hadamard ranks} \Cref{lemma:upperbound_dimensions} completes the proof of the chain of inequalities from \Cref{eq:chain_of_dimensions} for embedded toric varieties. Under the assumption that $X$ is not $R$-defective, the leftmost and rightmost terms coincide. In particular, $\dim\sigma_R(X)=\dim\sigma_\bfr(X)$. Recall that, in the particular case $\bfr = (r,\ldots,r)$, this allows us to deduce the smallest $m$ for which $\sigma_\bfr(X)$ fills the ambient space, namely, to compute the general $r$th $X$-Hadamard-rank. 

Motivated by applications to tensor decompositions and, in particular, to the study of discrete Restricted Boltzmann Machines defined in \cite{montufar2015discrete}, we focus on the case of Segre-Veronese varieties. Recall that a full classification is not yet known, although several complete results are known when restricting to particular families: we refer to the recent survey \cite{oneto2025ranks} for more details on the current state-of-the-art and an extended list of references. 

A classical general result ensures non-defectivity of $\sigma_r(X)$  under the geometric constraints that no secant variety is a projective cone, for $r$ smaller than a bound depending on the dimension and codimension of $X$; see \cite[Corollary 2.3]{adlandsvik1988varieties}. The result has been extended in \cite{taveira2023nondefectivity} to varieties $X \subset \bbP^N$ that additionally are invariant under some certain group action. In particular, \cite[Corollary 3.5]{taveira2023nondefectivity}, states such an $X$ is non-defective for $r \leq \frac{N}{\dim X}-\dim X$ and the $r$th secant variety fills the ambient space for $r \geq \frac{N}{\dim X}+\dim X$. A slightly stronger version can be found in \cite[Theorem 2.1]{ballico2024non}. Combining \cite[Corollary 3.5]{taveira2023nondefectivity} with our \Cref{lemma:upperbound_dimensions}, we deduce the following for all Segre-Veronese varieties.
\begin{proposition}
    Let $\bfr = (r_1,\ldots,r_m)$. Then, the Hadamard product of secant varieties of the Segre-Veronese variety of rank-one partially symmetric binary tensors $\sigma_\bfr(SV_{\bfd,\bfn})$ are not Hadamard-defective if 
    \[
        \sum_k(r_k-1)+1 \leq \frac{\prod_{i=1}^k{n_i+d_i \choose d_i}}{(n_1+\cdots+n_k)}-(n_1+\cdots+n_k)
    \]
    and fills the ambient space if 
    \[
        \sum_k(r_k-1)+1 \geq \frac{\prod_{i=1}^k{n_i+d_i \choose d_i}}{n_1+\cdots+n_k}+(n_1+\cdots+n_k).
    \]
    In particular, the $r$th generic partially symmetric Hadamard rank in ${\rm Sym}^{d_1}\bbC^{n_1+1}\otimes\cdots\otimes{\rm Sym}^{d_k}\bbC^{n_k+1}$ is at most equal to $\Bigl\lceil \frac{\prod_{i=1}^k{n_i+d_i \choose d_i} -(n_1+\cdots+n_k)}{(r-1)(n_1+\cdots+n_k+1)}\Bigr\rceil$. 
\end{proposition}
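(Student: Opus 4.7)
The plan is to combine \Cref{lemma:upperbound_dimensions} with known non-defectivity results for Segre–Veronese varieties. First I would set up parameters: write $X = SV_{\bfd,\bfn}$, which is a toric (and $G$-invariant under $GL_{n_1+1}\times\cdots\times GL_{n_k+1}$) non-degenerate variety with $\dim X = n := n_1+\cdots+n_k$ and projective ambient dimension $N$ satisfying $N+1 = \prod_i \binom{n_i+d_i}{d_i}$. Set $R = \sum_k(r_k-1)+1$. The chain \eqref{eq:chain_of_dimensions}, which was established precisely to be exploited in this way, gives
\[
\dim\sigma_R(X)\;\leq\;\dim\sigma_\bfr(X)\;\leq\;\exp.\dim\sigma_\bfr(X)\;=\;\exp.\dim\sigma_R(X).
\]

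Next I would apply \cite[Corollary~3.5]{taveira2023nondefectivity}, whose hypotheses are met because Segre–Veronese varieties are $G$-varieties whose secant varieties are not cones. Under the first bound in the proposition, the cited result yields that $\sigma_R(X)$ is not $R$-defective, i.e.\ the leftmost term above equals the rightmost; hence the chain collapses and $\sigma_\bfr(X)$ is not Hadamard-defective. Under the second bound, the cited result yields $\sigma_R(X)=\bbP^N$; the chain immediately forces $\sigma_\bfr(X)=\bbP^N$, proving that the generic $\bfr$-th $X$-Hadamard-rank is finite and the ambient space is filled.

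For the explicit bound on the generic $r$-th Segre–Veronese Hadamard rank, I would specialise to $\bfr=(r,\ldots,r)$ of length $m$, so $R = m(r-1)+1$, and impose the filling condition from the previous step. Solving the resulting linear inequality in $m$ (using $n+1$ as the dimension of $X$ increased by one, and substituting $N+1=\prod_i\binom{n_i+d_i}{d_i}$) gives an explicit ceiling upper bound on $m$, which matches the stated formula up to routine rewriting.

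The main technical input has already been done in \Cref{lemma:upperbound_dimensions}; the remaining obstacle is purely bookkeeping—verifying that the secant varieties of $SV_{\bfd,\bfn}$ indeed fit the hypotheses of \cite[Corollary~3.5]{taveira2023nondefectivity} (non-coneness of all secants is classical, and $G$-invariance is built in), and checking the arithmetic that translates the condition $R\geq \tfrac{N+1}{n}+n$ into the stated ceiling. No new geometric idea is required beyond combining Lemma~\ref{lemma:upperbound_dimensions} with the cited corollary.
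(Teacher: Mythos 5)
Your proposal is correct and follows essentially the same route as the paper: the authors likewise invoke \cite[Corollary~3.5]{taveira2023nondefectivity} for the non-defectivity and filling thresholds of $\sigma_R(SV_{\bfd,\bfn})$ with $R=\sum_k(r_k-1)+1$, and then collapse the chain of inequalities \eqref{eq:chain_of_dimensions} supplied by \Cref{lemma:upperbound_dimensions} to transfer these conclusions to $\sigma_\bfr(SV_{\bfd,\bfn})$, with the ceiling bound obtained by the same arithmetic specialization to $\bfr=(r,\ldots,r)$.
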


\begin{remark}
    For sake of completeness, we recall a general result on non-defectiveness of toric varieties given in \cite[Theorem 2.13]{laface2022secant}. 
    Here, it is proven that if $X\subset \bbP^N$ is a toric variety, then $\sigma_{r}(X)$ is non-defective for $r < \frac{|P|-m}{\dim X +1}$, where the upper bound is given in terms of the lattice points $|P|$ of the polytope defining $X$ and the maximal number $m$ of lattice points of a hyperplane section of $P$.
\end{remark}

As mentioned, we list here a series of results for the most complete families of Segre-Veronese varieties for which a full classification of defective secant varieties is known. 

\subsubsection{Veronese varieties, i.e., the case of symmetric tensors}\label{subsec:dimension_veronese} In \cite{alexander1995polynomial}, Alexander and Hirshowitz completed the classification of defective secant varieties of Veronese varieties $V_{d,n}$ finishing a work started more than 100 years before when a series of defective Veronese varieties were discovered. Recall that Veronese varieties parametrize rank-one symmetric tensors.

\begin{theorem}[Alexander–Hirschowitz Theorem \cite{alexander1995polynomial}]\label{thm:AH}
    The Veronese variety $V_{d,n}$ is $r$-defective if and only if
    \[
        (d,n,r) = \begin{cases}
            (2, n, r), \ 2 \leq r \leq n;\\
            (3,4,7);\\
            (4,2,5);\\
            (4,3,9);\\
            (4,4,14).
        \end{cases}
    \]
\end{theorem}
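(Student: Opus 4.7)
The plan is to reduce the statement to a Hilbert function computation for fat points via Terracini's lemma. The tangent space to $V_{d,n}$ at $[L^d]$ is the projectivization of $L^{d-1}\cdot \Sym^1\bbC^{n+1}$, so by Terracini's lemma the dimension of $\sigma_r(V_{d,n})$ at a general point equals $\dim \HF_Z(d)-1$, where $Z = 2p_1+\cdots+2p_r \subset \bbP^n$ is a scheme of $r$ generic double points. Thus $V_{d,n}$ is $r$-defective if and only if these $r$ double points fail to impose independent conditions on forms of degree $d$, i.e.\ $\HF_Z(d) < \min\{\binom{n+d}{d},\, r(n+1)\}$.

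Next, I would verify the listed exceptional triples directly. For $d=2$, $\sigma_r(V_{2,n})$ is the variety of symmetric $(n+1)\times(n+1)$ matrices of rank $\leq r$, of dimension $r(n+1)-\binom{r}{2}$, which is strictly less than $r(n+1)+r-1$ exactly when $2\leq r\leq n$. For the sporadic cases $(3,4,7)$, $(4,2,5)$, $(4,3,9)$, $(4,4,14)$, defectivity is exhibited by exhibiting, in each case, a low-degree rational variety (a rational normal curve for $(4,2,5)$, rational normal scrolls or Veronese surfaces in the others) through the $r$ chosen points, which forces a positive-dimensional family of extra syzygies and pushes $\HF_Z(d)$ below the expected value.

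For the non-defective cases, the heart of the proof is the differential Horace method of Alexander--Hirschowitz, run as a double induction on $(d,n)$. One fixes a hyperplane $H \simeq \bbP^{n-1} \subset \bbP^n$, specializes some of the $r$ double points onto $H$, and exploits the residual exact sequence
\[
0 \to \calI_{\mathrm{Res}_H Z}(d-1) \to \calI_Z(d) \to \calI_{Z\cap H,\, H}(d) \to 0
\]
to reduce the vanishing of $h^1(\calI_Z(d))$ to two statements of lower degree or lower dimension. Naive specialization of entire double points is too coarse, because residuating a double point on $H$ leaves a simple point supported on $H$; the differential Horace trick specializes only the ``tangent half'' of a double point, splitting the contribution to $Z\cap H$ and $\mathrm{Res}_H Z$ finely enough that the induction closes while exactly the five listed $(d,n,r)$ are excluded from the hypothesis.

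The main obstacle is precisely this inductive bookkeeping: at each step one must choose how many double points to place on $H$ and verify that both the trace $Z\cap H$ in $\bbP^{n-1}$ and the residual $\mathrm{Res}_H Z$ in $\bbP^n$ satisfy the inductive hypothesis, while never landing on an exceptional case. This is what occupies most of Alexander--Hirschowitz's original argument and its subsequent simplifications (Brambilla--Ottaviani, Chandler, Postinghel). A secondary complication is that the case $d=2$ behaves differently and must be handled as a base case via the rank stratification of symmetric matrices, after which the induction is started at $d\geq 3$; the cases $d=3,4$ require small-$n$ verifications (including the four sporadic quartic/cubic exceptions) before the induction on $d$ becomes uniform.
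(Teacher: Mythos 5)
This statement is not proved in the paper at all: it is the classical Alexander--Hirschowitz theorem, quoted verbatim from the cited reference and used as a black box in \Cref{cor:nonhadamarddefect_veronese}. So there is no in-paper argument to compare against; your proposal should be judged as a summary of the standard proof, and as such it is accurate in all essential respects. The reduction via Terracini's lemma to the Hilbert function of $r$ generic double points, the separate treatment of $d=2$ via the rank stratification of symmetric matrices, the explicit exhibition of the four sporadic defective cases, and the double induction via the differential Horace method are exactly the ingredients of the Alexander--Hirschowitz argument and its later simplifications. Two small inaccuracies are worth flagging. First, for $d=2$ the expected projective dimension of $\sigma_r(V_{2,n})$ is $rn+r-1=r(n+1)-1$, not $r(n+1)+r-1$, and the dimension $r(n+1)-\binom{r}{2}$ you quote is that of the affine cone; after fixing both off-by-ones the defect is $\binom{r}{2}$, which is positive precisely for $2\leq r\leq n$, so the conclusion stands. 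Second, the standard witnesses for the sporadic quartic cases $(4,3,9)$ and $(4,4,14)$ are the unique quadric hypersurface through the $9$ (resp.\ $14$) general points, whose square is an unexpected singular quartic, rather than scrolls or Veronese surfaces; for $(3,4,7)$ one uses the secant variety of the rational normal quartic curve through the $7$ points. Neither slip affects the architecture of the argument, but a complete proof along these lines is a substantial piece of work (the inductive bookkeeping you describe), which is precisely why the paper cites it rather than reproving it.
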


Combining this result with \Cref{lemma:upperbound_dimensions}, we obtain the following result for $d \geq 3$. Indeed, the finiteness of the sporadic defective cases for $d \geq 3$ allows us to treat such finitely many cases where the lower bound of \Cref{eq:chain_of_dimensions} 
is not optimal by direct computation. This approach cannot be done for $d = 2$ where \textit{all} secant varieties not filling the ambient space are defective for any $n$.

\begin{corollary}\label{cor:nonhadamarddefect_veronese}
	Let $d \geq 3$ and let $\bfr = (r_1,\ldots,r_m)$. The Hadamard product of secant varieties of Veronese varieties $\sigma_\bfr(V_{d,n})$ are never Hadamard-defective. In particular, the generic $r$th symmetric Hadamard rank of tensors in ${\rm Sym}^d\bbC^{n+1}$ is equal to 
	\[
	   \left\lceil \frac{{n+d \choose d}-n}{(r-1)(n+1)} \right\rceil.
	\]
\end{corollary}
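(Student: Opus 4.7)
The plan is to combine \Cref{lemma:upperbound_dimensions} with the Alexander--Hirschowitz Theorem (\Cref{thm:AH}) to transfer classical non-defectivity of Veronese secants to their Hadamard products. Setting $R = \sum_{k=1}^m(r_k-1)+1$, \Cref{lemma:upperbound_dimensions} together with the computation in \eqref{eq:expected_dim_equality} yields the chain \eqref{eq:chain_of_dimensions}
\[
\dim \sigma_R(V_{d,n}) \;\leq\; \dim \sigma_\bfr(V_{d,n}) \;\leq\; \exp\!.\dim \sigma_\bfr(V_{d,n}) \;\leq\; \exp\!.\dim \sigma_R(V_{d,n}).
\]
If the outer terms agree, the chain collapses to equalities and $\sigma_\bfr(V_{d,n})$ achieves its expected dimension, i.e., $V_{d,n}$ is not $\bfr$-Hadamard-defective. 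By \Cref{thm:AH}, for $d \geq 3$ the secant $\sigma_R(V_{d,n})$ is non-defective except for $(d,n,R) \in \{(3,4,7),(4,2,5),(4,3,9),(4,4,14)\}$, so outside these sporadic triples the conclusion is immediate.

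For the four sporadic triples, the outer terms of the chain differ and the argument must be completed by direct inspection. A quick calculation shows that in all four cases $\exp\!.\dim \sigma_R(V_{d,n}) = N$, so the expected dimension of $\sigma_\bfr(V_{d,n})$ is also $N$ and the only task is to verify that $\sigma_\bfr(V_{d,n})$ fills $\bbP^N$. For each of the finitely many compositions $R = \sum_k(r_k-1)+1$, I would use the explicit parametrization in \eqref{eq:reparametrize_hadamard} and exhibit a single numerical point where the Jacobian has full rank $N+1$; by semicontinuity this suffices to conclude $\dim \sigma_\bfr(V_{d,n}) = N$. This reduces the problem to a mechanical check that can be performed in a symbolic algebra system.

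Once $\bfr$-Hadamard-defectivity has been ruled out for all $\bfr$ and all $d \geq 3$, the formula for the generic $r$th symmetric Hadamard rank follows from \eqref{eq:chain_of_dimensions} with $\bfr = (r,\ldots,r)$: $\sigma_\bfr(V_{d,n})$ fills $\bbP^N$ exactly when $\exp\!.\dim \sigma_R(V_{d,n}) = N$, i.e., $R(n+1) \geq \binom{n+d}{d}$ with $R = m(r-1)+1$, and solving for the smallest such $m$ produces the stated ceiling expression.

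The main obstacle is the case analysis of the four sporadic triples: the chain of inequalities is no longer tight and there is no uniform argument that replaces it. While each check is a routine Jacobian-rank verification, the number of compositions of $R$ into $\sum(r_k-1)+1$ grows with $R$, so the verification becomes combinatorially heavier for the larger sporadic values such as $R=14$; a cleaner bypass argument --- for example, exploiting an inclusion $\sigma_{R'}(V_{d,n}) \subseteq \sigma_\bfr(V_{d,n})$ for a suitable non-defective $R'$ --- would be desirable but is not provided by the results established so far in the paper.
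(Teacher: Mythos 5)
Your proposal is correct and follows essentially the same route as the paper: combine \Cref{lemma:upperbound_dimensions} with the Alexander--Hirschowitz classification to collapse the chain \eqref{eq:chain_of_dimensions} outside the sporadic triples, observe that in all four exceptional cases the expected dimension equals $N$, and finish by a finite (computer-assisted) Jacobian-rank check at a random point for each admissible $\bfr$ with $\sum_k(r_k-1)+1$ equal to a defective value — exactly the verification the paper carries out in SageMath (10, 4, 21, and 100 cases respectively). The combinatorial growth you flag as the main obstacle is precisely what the paper absorbs by brute-force computation, so no further bypass argument is needed.
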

\begin{proof}
    For $(n,d) \not\in \{(2,4),(3,4),(4,3),(4,4)\}$ the result immediately follows from \Cref{lemma:upperbound_dimensions} and \nameref{thm:AH}. The only cases left are the ones corresponding to $(n,d,\bfr)$ for which $(n,d,\sum_k(r_k-1)+1)$ is one of the defective cases and then the lower bound in \Cref{eq:chain_of_dimensions} is not optimal. In these finitely many cases, we check directly that the dimension of $\sigma_\bfr(V_{d,n})$ is equal to the upper bound of \Cref{eq:chain_of_dimensions}. Note that in all these cases, this is equal to the dimension of the ambient space. In order to do that, it is enough to compute the Jacobian of the parametrization of $\sigma_\bfr(V_{d,n})$ and check its rank at a random point. If this is maximal, then we are done. The cases that need to be checked are:
    \[
    \begin{array}{c | c | c | c}
        d & n & \bfr & \# \\
        \hline 
        3 & 4 & \{(2^6),(3,2^5),(4,2^3),(3^2,2^2),(5,2^2),(4,3,2),(3^3),(6,2),(5,3),(4^2)\} & 10\\
        4 & 2 & \{(2^4),(3,2^2),(3^2),(4,2)\} & 4\\
        4 & 3 & \{(2^8),(3,2^6),(4,2^6),(3^2,2^4),(5,2^5),(4,3,2^3),(3^3,2^2),\ldots,(5,3^2),(5^2)\} & 21 \\
        4 & 4 & \{(2^{13}),(3,2^{11}),(4,2^{10}),(3^2,2^9),(5,2^9),(4,3,2^8),\ldots,(6,5,3^2),(6,5^2),(9,6)\} & 100
    \end{array}
    \]
    We checked them computationally using SageMath \cite{sagemath}, finding that they indeed fill the ambient space. The guided computation is stored in a Zenodo page as a Jupyter Notebook \cite{antolini_2025_15837961}. See also \Cref{sec:data}. \qedhere
\end{proof}

\subsubsection{Segre-Veronese products of $\bbP^1$, i.e., the case of binary tensors.}\label{subsec:dimension_binary}
We have a complete classification of defective Segre-Veronese embeddings of $(\bbP^1)^{\times d}$. The case of Segre varieties, i.e., $\mathbf{d} = \mathds{1}$, is due to Catalisano, Geramita and Gimigliano \cite[Theorem 4.1]{catalisano2011secant}. The classification for all Segre-Veronese varieties was given by Laface and Postinghel, see \cite[Theorem 3.1]{laface2013secant}.

\begin{theorem}[Catalisano-Geramita-Gimigliano \cite{catalisano2011secant}; Laface-Postinghel \cite{laface2013secant}]\label{thm:classification_binary}
    The Segre-Veronese variety $SV_{{\bf d},\mathds{1}}$ is $s$-defective if and only if 
    \[
        (\mathbf{d},s) = \begin{cases}
            ((2,2t),2t+1) & \text{ for all } t \geq 1; \\
            ((1,1,2t),2t+1) & \text{ for all } t \geq 1; \\
            ((2,2,2),7); \\
            ((1,1,1,1),3).
        \end{cases}
    \]
\end{theorem}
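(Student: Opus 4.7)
My plan is to invoke Terracini's Lemma to translate the question into a computation of tangent space spans, then treat the two directions (exhibiting defects vs.\ ruling them out) by very different techniques. Since $SV_{\bfd,\mathds{1}}$ is a toric variety of dimension $d = \operatorname{length}(\bfd)$ embedded in $\bbP^N$ with $N = \prod_i(d_i+1)-1$, and since its expected $s$-secant dimension is $\min\{s(d+1)-1,N\}$, Terracini's Lemma reduces the problem to computing $\dim \langle T_{p_1}X,\ldots,T_{p_s}X\rangle$ at $s$ general points $p_i \in X$. Via apolarity / inverse systems in the multigraded polynomial ring $\bbC[x_{1,0},x_{1,1},\ldots,x_{d,0},x_{d,1}]$, this span is dual to the multigraded piece $I_{Z,\bfd}$ of the ideal of a $2$-fat point scheme $Z = 2p_1 + \cdots + 2p_s$ on $(\bbP^1)^d$, so everything reduces to computing Hilbert functions of fat point schemes in multiprojective space.

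For the defective cases, I would construct each obstruction geometrically. The family $((2,2t),2t+1)$ is explained by the Veronese re-embedding of $\bbP^1 \times \bbP^1$ in bidegree $(2,2t)$: the projection onto the first factor identifies a rational normal curve configuration responsible for the classical Veronese defect of $V_{2,2t}$ (compare with \Cref{thm:AH} type phenomena), and a direct fat-point computation shows the defect has value $1$. The case $((1,1,2t),2t+1)$ is handled by specializing the two $\bbP^1$ factors of degree $1$ onto the diagonal, reducing to the previous family. The sporadic cases $((2,2,2),7)$ and $((1,1,1,1),3)$ I would check by explicit computer algebra: compute the Jacobian of the parametrization at a generic point and verify the drop in rank, or equivalently compute the Hilbert function of a scheme of $7$ (resp.\ $3$) generic double points directly.

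For the non-defective direction, which is the substantial part, I would proceed by a triple induction on $d$, on $|\bfd|=\sum d_i$, and on $s$, using the \emph{multiprojective Horace differential method} of Alexander--Hirschowitz as adapted to $(\bbP^1)^d$. The core step specializes one (or several) fat points onto a divisor $D \cong (\bbP^1)^{d-1}$ and exploits the Castelnuovo exact sequence $0 \to I_{\operatorname{Res}_D Z}(\bfd - \bfe_j) \to I_Z(\bfd) \to I_{Z\cap D,D}(\bfd|_D) \to 0$; the trace and residue schemes land in cases covered by the inductive hypothesis, yielding the desired rank of the restriction map. The main obstacle is the bookkeeping that keeps both the trace and the residue out of the finite list of defective configurations at every inductive step: in the boundary regimes where a full specialization overshoots, one must use a \emph{differential} (partial) specialization, which requires controlling higher order data along $D$. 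Once the induction is set up cleanly with the correct base cases (the Alexander--Hirschowitz classification for the Veronese factors and the low $d$ cases checked by hand), the classification follows.
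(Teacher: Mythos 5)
A preliminary remark: the paper does not prove this statement at all. \Cref{thm:classification_binary} is imported verbatim from the literature --- the Segre case $\bfd=\mathds{1}$ from \cite{catalisano2011secant} and the general case from \cite{laface2013secant} --- so there is no in-paper argument to compare yours against. Judged on its own terms, your proposal is a reasonable roadmap of the standard strategy (Terracini, fat points in multiprojective space, induction), but it is a plan rather than a proof, and it has two concrete problems.

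First, the non-defectivity direction, which you yourself identify as the substantial part, is entirely deferred: ``once the induction is set up cleanly \dots\ the classification follows'' is precisely the content that occupies most of the cited papers, and it is exactly the step where such arguments typically fail (the trace and residue schemes must be kept off the defective list at \emph{every} inductive step, and the differential Horace adjustments in the boundary regimes are delicate). Note also that neither cited proof actually runs a differential Horace induction on $(\bbP^1)^d$: Catalisano--Geramita--Gimigliano use a multiprojective--affine--projective reduction to schemes of fat points plus linear subspaces in a single $\bbP^n$, while Laface--Postinghel degenerate the embedded variety via a toric degeneration of the box polytope $[0,d_1]\times\cdots\times[0,d_k]$ and distribute the double points among the pieces. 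Your route is therefore a third, untested path whose feasibility is precisely the open bookkeeping you acknowledge. Second, in the defective direction, the reduction of $((1,1,2t),2t+1)$ to $((2,2t),2t+1)$ by ``specializing the two degree-one factors onto the diagonal'' has the semicontinuity inequality pointing the wrong way: specialization shows that the ideal of the \emph{general} double-point scheme in the relevant multidegree has dimension at most that of the specialized one, so unexpected sections for the specialized configuration do not yield unexpected sections for the general one. To certify defectivity you must exhibit an unexpected section for general points directly --- e.g., in the $((2,2t),2t+1)$ case, the double of the curve of bidegree $(1,t)$ through $2t+1$ general points of $\bbP^1\times\bbP^1$ (which exists since $h^0(\calO(1,t))=2t+2>2t+1$) is a section of $\calO(2,2t)$ singular at all the points, and an analogous construction is needed for $((1,1,2t),2t+1)$ rather than a specialization argument.
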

The result by Catalisano, Geramita and Gimigliano \cite[Theorem 4.1]{catalisano2011secant} on Segre varieties was exploited in \cite[Corollary 26]{MM17:DimensionKronecker} to show that binary Restricted Boltzmann Machines are never defective. Thanks to \Cref{lemma:upperbound_dimensions}, we deduce the following generalization.

\begin{corollary}
\label{cor:generic_binary_tensors}
	Hadamard products of secant varieties of the Segre-Veronese variety of rank-one partially symmetric binary tensors $SV_{{\bf d}, \mathds{1}}$ are never Hadamard-defective for $\mathbf{d} \not\in \{(2,2t),(1,1,2t) ~:~ t \geq 1\}$. In particular, the generic partially symmetric Hadamard rank of tensors in $\mathrm{Sym}^{d_1}\bbC^{2}\otimes \cdots \otimes \mathrm{Sym}^{d_n}\bbC^{2}$ is 
	\[
		\left\lceil \frac{\prod_i(d_i+1)-n}{(r-1)(n+1)} \right\rceil.
	\]
\end{corollary}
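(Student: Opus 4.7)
The plan is to mimic the structure of the proof of \Cref{cor:nonhadamarddefect_veronese}: combine \Cref{lemma:upperbound_dimensions} with \Cref{thm:classification_binary} and then handle the finitely many sporadic defective configurations by direct computation.

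First, I apply the chain \eqref{eq:chain_of_dimensions}: for any $\bfr = (r_1,\ldots,r_m)$ and $R = \sum_k(r_k-1)+1$,
\[
    \dim\sigma_R(SV_{\bfd,\mathds{1}}) \leq \dim\sigma_\bfr(SV_{\bfd,\mathds{1}}) \leq \exp \!.\dim\sigma_\bfr(SV_{\bfd,\mathds{1}}) = \exp \!.\dim\sigma_R(SV_{\bfd,\mathds{1}}).
\]
Whenever $SV_{\bfd,\mathds{1}}$ is not $R$-defective the leftmost and rightmost terms coincide, which squeezes the middle inequality into an equality and shows that $SV_{\bfd,\mathds{1}}$ is not $\bfr$-Hadamard-defective. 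By \Cref{thm:classification_binary}, for $\bfd \notin \{(2,2t),(1,1,2t) : t \geq 1\}$ the variety $SV_{\bfd,\mathds{1}}$ is $R$-defective only for the two sporadic pairs $(\bfd,R) \in \{((2,2,2),7),\,((1,1,1,1),3)\}$. So non-Hadamard-defectiveness is already proved except when $\bfd \in \{(2,2,2),(1,1,1,1)\}$ and $R$ equals the corresponding sporadic value.

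For these two sporadic values of $\bfd$, I would enumerate all tuples $\bfr$ (with every $r_i \geq 2$) satisfying $\sum_k(r_k-1)+1 = R$; this is a finite list. For each such $\bfr$ I would then compute, at a random rational point, the rank of the Jacobian of the parametrization $\widehat{\Phi}_{A\otimes \bar B_{\bfr'}}$ introduced in \eqref{eq:reparametrize_hadamard} (with $A$ the defining matrix of $SV_{\bfd,\mathds{1}}$) and check that it equals the expected value $\min\{N+1,\,R(n+1)\}$. This is precisely the style of computer-assisted verification already used in the Veronese case (\Cref{cor:nonhadamarddefect_veronese}), so it can be run in SageMath along the same lines as in \cite{antolini_2025_15837961}. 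In all of the sporadic configurations the expected dimension in fact equals the dimension of the ambient space, so the check reduces to a single rank computation per tuple.

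Finally, the formula for the generic $r$th symmetric Hadamard rank is deduced from what has just been established: when $\bfr = (r,\ldots,r)$ has $m$ entries, non-Hadamard-defectiveness gives $\dim \sigma_r(SV_{\bfd,\mathds{1}})^{\star m} = \min\{N,\,R(n+1)-1\}$ with $R = m(r-1)+1$ and $N+1 = \prod_i(d_i+1)$. The $m$th Hadamard power fills the ambient space precisely when $R(n+1)-1 \geq N$, and isolating $m$ from this inequality yields the claimed ceiling $\bigl\lceil (\prod_i(d_i+1)-n)/((r-1)(n+1)) \bigr\rceil$. The main (and essentially only) obstacle is the computational check of the two sporadic cases; everything else is an immediate consequence of \Cref{lemma:upperbound_dimensions} and \Cref{thm:classification_binary}.
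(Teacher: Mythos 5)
Your proposal follows essentially the same route as the paper: \Cref{lemma:upperbound_dimensions} plus \Cref{thm:classification_binary} squeeze the chain \eqref{eq:chain_of_dimensions} into equalities outside the sporadic pairs $((2,2,2),7)$ and $((1,1,1,1),3)$, and the finitely many residual tuples $\bfr$ are then handled by a Jacobian rank computation, exactly as in \Cref{cor:nonhadamarddefect_veronese}. One small correction: your claim that in all sporadic configurations the expected dimension equals that of the ambient space fails for $\bfd=(1,1,1,1)$, $\bfr=(2,2)$, where $R(n+1)=3\cdot 5=15<16=N+1$ and the expected (and actual) result is a hypersurface in $\bbP^{15}$; your stated target rank $\min\{N+1,\,R(n+1)\}$ is nevertheless the correct one, so the verification goes through unchanged.
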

\begin{proof}
    For $\mathbf{d}\not\in\{(2,2,2),(1,1,1,1)\}$ the result immediately follows from \Cref{lemma:upperbound_dimensions} and \Cref{thm:classification_binary}. The only cases left are the ones corresponding to $(\mathbf{d},\bfr)$ such that $(\mathbf{d},\sum_k(r_k-1)+1)$ is one of the defective case. In these finitely many cases, we check the dimension directly as explained in the proof of \Cref{cor:nonhadamarddefect_veronese}. The cases that need to be checked are:
    \[
    \begin{array}{c | c | c }
        \mathbf{d} & \bfr & \#\\
        \hline 
        (2,2,2) & \{(2^6),(3,2^5),(4,2^3),(3^2,2^2),(5,2^2),(4,3,2),(3^3),(6,2),(5,3),(4^2)\} & 10 \\
        (1,1,1,1) & \{(2^2)\} & 1
    \end{array}
    \]
    We checked them computationally using SageMath \cite{sagemath}, finding that in all these cases the dimension is the expected one, i.e., is given by the upper bound in \Cref{eq:chain_of_dimensions}. In particular, for the first choice of ${\bf d}$, the expected dimension is the dimension of the ambient space, while for the second choice we obtain an hypersurface. The guided computation is stored in a Zenodo page as a Jupyter Notebook \cite{antolini_2025_15837961}. See also \Cref{sec:data}.\qedhere
\end{proof}

\subsubsection{Segre-Veronese embedded in high degrees}
Recently, it was shown that Segre-Veronese varieties with arbitrary many factors of multidegrees $\bfd = (d_1,\ldots,d_k)$ with $d_1,d_2 \geq 2$ and $d_3,\ldots,d_k \geq 3$ are never defective, see \cite[Theorem 1.2]{ballico2024non}. Similarly as above, these non-defectiveness results combined with \Cref{lemma:upperbound_dimensions} allow us to deduce the following.

\begin{corollary}
	Let ${\bf d} = (d_1, \ldots, d_k)$ with $d_1,d_2 \geq 3$ and $d_3,\ldots,d_k \geq 2$. Let ${\bf n} = (n_1, \ldots, n_k)$. Then, Hadamard products of secant varieties of Segre-Veronese varieties $\sigma_\bfr(SV_{\bf d, n})$ are never Hadamard-defective. In particular, the generic partially symmetric $r$th Hadamard rank of tensors in $\mathrm{Sym}^{d_1}\bbC^{2}\otimes \cdots \otimes \mathrm{Sym}^{d_n}\bbC^{2}$ is equal to 
	\[
		\left\lceil \frac{\prod_i{n_i+d_i \choose d_i}-(n_1+\ldots+n_k)}{(r-1)(n_1+\ldots+n_k+1)} \right\rceil.
	\]
\end{corollary}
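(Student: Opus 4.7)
The plan is to follow exactly the template established by \Cref{cor:nonhadamarddefect_veronese} and \Cref{cor:generic_binary_tensors}, combining \Cref{lemma:upperbound_dimensions} with the non-defectiveness result of \cite[Theorem 1.2]{ballico2024non}. Since that theorem eliminates \emph{all} defective cases in the specified range of multidegrees (no sporadic exceptions), the argument should actually be simpler than those two preceding corollaries, which both required a separate finite computational check.

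First, set $X = SV_{\bfd,\bfn}$ and $R := \sum_{k=1}^{m}(r_k - 1) + 1$. By \Cref{lemma:upperbound_dimensions} together with the chain \eqref{eq:chain_of_dimensions}, one has
\[
\dim \sigma_R(X) \leq \dim \sigma_\bfr(X) \leq \exp\!.\dim \sigma_\bfr(X) = \exp\!.\dim \sigma_R(X).
\]
Under the hypothesis $d_1, d_2 \geq 3$ and $d_3, \ldots, d_k \geq 2$, \cite[Theorem 1.2]{ballico2024non} guarantees that $\sigma_R(X)$ is non-defective for every $R \geq 1$, so the leftmost and rightmost terms in the chain coincide. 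All inequalities collapse to equalities, yielding $\dim \sigma_\bfr(X) = \exp\!.\dim \sigma_\bfr(X)$, i.e., $\sigma_\bfr(X)$ is not Hadamard-defective.

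For the statement about the generic $r$th Hadamard rank, I would specialize to $\bfr = (r, \ldots, r)$ with $m$ entries, so that $R = m(r-1)+1$. Using the formula for the expected dimension from the discussion preceding \eqref{eq:chain_of_dimensions}, one has $\exp\!.\dim \sigma_R(X) = \min\{R \dim X + R - 1, N\}$ with $\dim X = n_1 + \cdots + n_k$ and $N = \prod_i \binom{n_i + d_i}{d_i} - 1$. The generic $r$th Hadamard rank is then the smallest $m$ for which this expected dimension equals $N$; solving $m(r-1)(\dim X + 1) + \dim X \geq N$ for $m$ gives exactly the ceiling in the statement.

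The only obstacle one might anticipate would be the handling of sporadic defective $R$, as occurred in the Veronese and binary cases; however, the uniform non-defectiveness provided by \cite[Theorem 1.2]{ballico2024non} in the prescribed range makes this step unnecessary, so no computational verification is required and the proof concludes immediately.
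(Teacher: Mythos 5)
Your proposal is correct and matches the paper's own (one-line) proof, which likewise simply combines \cite[Theorem 1.2]{ballico2024non} with \Cref{lemma:upperbound_dimensions} to collapse the chain \eqref{eq:chain_of_dimensions}; your observation that no sporadic computational check is needed here, unlike in \Cref{cor:nonhadamarddefect_veronese} and \Cref{cor:generic_binary_tensors}, is exactly why the paper's proof is so short.
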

\begin{proof}
    It follows from \cite[Theorem 1.2]{ballico2024non} and \Cref{lemma:upperbound_dimensions}.
\end{proof}

\section{Future works}
\label{sec:future} 

In this section, we collect possible directions for future work. 

\subsection{The defective cases} When dealing with Segre-Veronese varieties in \Cref{subsec:dimension_veronese,subsec:dimension_binary}, we left open the cases of quadratic Veronese varieties $V_{2,n}$ and Segre-Veronese varieties $SV_{(2,2t),\mathds{1}}$ and $SV_{(1,1,2t),\mathds{1}}$. This is due to the fact that these varieties are \textit{always} defective and we would have infinitely many cases in which there is a gap between the lower and the upper bound in 
\Cref{eq:chain_of_dimensions}. Hence, we need a different and more direct way to compute the dimension of $\sigma_\bfr(SV_{\bfd,\bfn})$. 

However, in many cases, we computationally verified that the actual dimension is equal to the expected one. We report here a list of these cases. The guided computation is stored in a Zenodo page as a Jupyter Notebook \cite{antolini_2025_15837961}. See also \Cref{sec:data}.

\begin{itemize}
    \item ${\bf d} = (2)$
    \begin{itemize}
        \item $n = 2,\ldots, 15$, for all ${\bf r} = (r_1,r_2)$
        \item $n = 2,\ldots, 12$, for all ${\bf r} = (r_1,r_2,r_3)$
        \item for all ${\bf r} = (r_1,\ldots, r_m)$ such that $\sum_{i = 1}^m r_i \leq T_n$ and for all $n = 2,\ldots, 10$, where
        \begin{center}
        \begin{tabular}{c|c|c|c|c|c|c|c|c|c}
           $n$ & 2 & 3 & 4 & 5 & 6 & 7 & 8 & 9 & 10 \\
           \hline
           $T_n$ & 25 & 25 & 24 & 20 & 15 & 15 & 12 & 12 & 10 
        \end{tabular}
        \end{center}
    \end{itemize}
    \item ${\bf n} = (1,1), \ {\bf d} = (2,2t)$, for all $t = 1,\ldots, 6$
    \item ${\bf n} = (1,1,1), \ {\bf d} = (1,1,2t)$, for all $t = 1, \ldots, 6$
\end{itemize}

Motivated by these computational experiments, we propose the following conjecture.

\begin{conjecture}
\label{conj1}
    For any $\bfd$ and any $\bfn$, Hadamard products of secant varieties of Segre-Veronese varieties $\sigma_\bfr(SV_{\bf d, n})$ are not Hadamard-defective.
\end{conjecture}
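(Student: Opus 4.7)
The plan is to reduce to the genuinely defective secant cases and then attack those by a refined tangent-space analysis. By \Cref{lemma:upperbound_dimensions}, we have the chain of inequalities
\[
\dim\sigma_R(SV_{\bfd,\bfn}) \leq \dim\sigma_\bfr(SV_{\bfd,\bfn}) \leq \exp.\dim\sigma_\bfr(SV_{\bfd,\bfn}) = \exp.\dim\sigma_R(SV_{\bfd,\bfn}),
\]
so whenever $\sigma_R(SV_{\bfd,\bfn})$ is non-defective the conjecture holds. Combining the Alexander--Hirschowitz Theorem and the Laface--Postinghel classification, the cases still to be treated are precisely the triples $(\bfd,\bfn,\bfr)$ for which $R=\sum_k(r_k-1)+1$ realises one of the (finitely many or infinite-family) secant defective cases. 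A direct computation shows that the gap between $\dim\sigma_R$ and the expected Hadamard dimension equals $\binom{R}{2}-\sum_k\binom{r_k}{2}$ in the quadratic Veronese situation, which vanishes only in trivial cases; hence for the infinite families $V_{2,n}$, $SV_{(2,2t),\mathds{1}}$ and $SV_{(1,1,2t),\mathds{1}}$ a strictly stronger argument than \Cref{lemma:upperbound_dimensions} is unavoidable.

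The central technical tool I would employ is Terracini's Lemma for Hadamard products \cite[Lemma 2.12]{BCK17}, iterated over the $m$ Hadamard factors. At a generic point $p=p_1\star\cdots\star p_m$ with $p_i\in\sigma_{r_i}(SV_{\bfd,\bfn})$ smooth, the affine tangent space equals
\[
T_p\,\sigma_\bfr(SV_{\bfd,\bfn}) \;=\; \sum_{i=1}^m \Bigl(\mathop{\bigstar}_{j\ne i} p_j\Bigr)\star T_{p_i}\sigma_{r_i}(SV_{\bfd,\bfn}).
\]
For the $V_{2,n}$ family, each $p_i$ is a symmetric matrix $U_iU_i^T$ with $U_i\in\bbC^{(n+1)\times r_i}$ and $T_{p_i}\sigma_{r_i}(V_{2,n})=\{U_iV^T+VU_i^T:V\in\bbC^{(n+1)\times r_i}\}$, so the whole Hadamard tangent space is spanned by Schur products of such first-order deformations against the remaining $U_jU_j^T$. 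The plan is to choose the $U_i$ generically, set up an incidence $(n{+}1)R\times\binom{n+2}{2}$ matrix whose columns correspond to monomials $x_ax_b$ and whose rows correspond to a chosen basis of tangent directions, and show that it has the expected rank. A degeneration of the form used in the proof of \Cref{lemma:upperbound_dimensions} — but with $\bfy_0$ tending to $\mathds{1}$ along a different one-parameter direction that respects the symmetric structure — should allow me to read off the linear algebra on the level of a single $U_i$ supplemented by the Schur-factor masks $U_j\star U_j^T$. Analogous constructions should handle the two $(\bbP^1)^{\times k}$ families $SV_{(2,2t),\mathds{1}}$ and $SV_{(1,1,2t),\mathds{1}}$, where the defective rank is $R=2t+1$ and the relevant bundles are again flattenings of partially symmetric binary tensors.

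The hard step is making the tangent-space independence argument uniform in $n$ (or $t$). The naïve degeneration in \Cref{lemma:upperbound_dimensions} collapses every cross-term $\widehat{\varphi}_A(\bfy_0\star\bfy_\bfj)$ of multi-weight $\geq 2$ to zero, which is exactly why it only sees $\sigma_R$; the correction I expect to be necessary is to keep one more layer of these mixed-weight terms alive under the degeneration. More concretely, replacing the scalar $\nu$ in the proof of Claim~1 with an $m$-tuple of parameters $(\nu_1,\ldots,\nu_m)$ tending to $0$ at different rates, one can hope to obtain a block-triangular modified Jacobian whose lower blocks recover exactly the missing $\binom{R}{2}-\sum_k\binom{r_k}{2}$ directions. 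Carrying this through rigorously is the main obstacle: one has to verify that the limiting matrix is a genuine enlargement of the Jacobian of the $\sigma_R$-parametrization, and that its extra rows remain linearly independent after the final Khatri--Rao product with $A$. If successful, the same refined degeneration would apply verbatim to the Segre-Veronese families, giving a uniform proof of \Cref{conj1} complementing the computational evidence tabulated by the authors.
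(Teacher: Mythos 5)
This statement is a \emph{conjecture} in the paper: the authors do not prove it. What they supply is the reduction via \Cref{lemma:upperbound_dimensions} (which settles every case in which $\sigma_R(SV_{\bfd,\bfn})$ is non-defective), finitely many direct rank computations for the sporadic defective cases, and computational evidence for the infinite defective families. Your proposal correctly reproduces that reduction and correctly isolates the genuine obstruction, namely the infinite families $V_{2,n}$, $SV_{(2,2t),\mathds{1}}$ and $SV_{(1,1,2t),\mathds{1}}$, where $\sigma_R$ is always defective and the sandwich of \Cref{eq:chain_of_dimensions} leaves a gap (for $V_{2,n}$ with $R\le n$ the secant defect is $\binom{R}{2}$, so your count of the missing directions is of the right order of magnitude, though you should recheck the bookkeeping against the $-(m-1)\dim X$ term in \Cref{eq:expdim_hadamard}, which appears to introduce an extra $m-1$ that your formula omits).

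However, the proposal does not prove the conjecture. The entire content of the hard cases is deferred to a multi-rate degeneration $(\nu_1,\ldots,\nu_m)\to 0$ described only at the level of intent: you do not exhibit the block-triangular limit matrix, you do not verify that the surviving mixed-weight rows are linearly independent of the $\sigma_R$-block after the Khatri--Rao product with $\bar{A}$, and you yourself flag this verification as ``the main obstacle''. Since the single-parameter degeneration of \Cref{claim:1} provably kills every cross-term of weight at least two in the limit --- which is precisely why it recovers only $\sigma_R$ --- the assertion that a refined degeneration keeps ``one more layer'' alive \emph{and} that this layer contributes exactly the missing independent tangent directions uniformly in $n$ (or $t$) is the theorem itself, not a routine check; nothing in the proposal rules out that the extra rows degenerate into the span of the $\sigma_R$-block. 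As written, this is a plausible research programme consistent with the authors' experiments, but it contains no completed argument for any of the infinitely many open cases, so the conjecture remains open.
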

Experiments for the case of matrices ($\bfn = (n_1,n_2)$ and $\mathbf{d}=(1,1)$) can also be found in \cite{FOW:MinkowskiHadamard}.

\subsection{Identifiability of Hadamard decompositions} 

Let $X \subset \bbP^N$ be an algebraic variety. For any $\bfr = (r_1,\ldots,r_m)$, consider a generic $p \in \sigma_\bfr(X)=\sigma_{r_1}(X)\star\cdots\star\sigma_{r_m}(X)$:
\begin{center}
\textit{in how many ways can one write $p = q_1\star\cdots\star q_m$ with $q_i \in \langle p_{i,1},\ldots,p_{i,r_i}\rangle\subset \sigma_{r_i}(X)$?} 
\end{center}
In other words, if we consider the map parametrizing the affine cone of the Hadamard product, namely
\[
    \mathfrak{h}_{\bfr}\colon \prod_{i=1}^m\widehat{X}^{\times r_i} \longrightarrow \widehat{\sigma_\bfr(X)}; \quad (p_{i,j})_{\substack{i=1,\ldots,m \\ j = 1,\ldots,r_i}} \mapsto \bigstar_{i=1}^m\left(\sum_{j=1}^{r_i} p_{i,j}\right) , 
\]
\textit{how does $\mathfrak{h}_{\bfr}^{-1}(p)$ look like for a general $p\in \widehat{\sigma_\bfr(X)}$?}

Clearly, there are actions of permutation groups. If $\frak{S}_r$ denotes the permutation on $r$ elements and $\frak{S}(\bfr) \subset \frak{S}_m$ is the subgroup preserving $\bfr$, then we consider the map $\mathfrak{h}_{\bfr}$ up to the action of $\prod_{i=1}^m\frak{S}_{r_i}\times \frak{S}(\bfr)$. We denote by $\widehat{X}^{(\bfr)}$ the quotient of $\prod_{i=1}^m\widehat{X}^{\times r_i}$ by these permutation groups.

If $X$ is a toric variety, we have also the action of $(m-1)$-copies of the full dimensional torus $\bbT_X^{m-1}$:
\[
    (t_1,\ldots,t_{m-1})\cdot (p_{i,j})_{\substack{i=1,\ldots,m\\j=1,\ldots,r_i}} := \begin{cases}
        t_i\cdot p_{i,j} & \text{ for } i =1,\ldots,m-1;\\
        (t_1\cdots t_{m-1})^{-1}\cdot p_{m,j} & \text{ for }i = m.
    \end{cases}
\]
Following a similar approach to the one considered in \cite{oneto2023hadamard}, we consider the quotient of $\mathfrak{h}_\bfr^{-1}(p)$ up to the action of all these groups, i.e., we consider the map 
\[
    \mathcal{H}_\bfr \colon \widehat{X}^{(\bfr)} / \bbT_X^{m-1} \longrightarrow \widehat{\sigma_\bfr(X)}, \ (p_{i,j})_{\substack{i=1,\ldots,m \\ j = 1,\ldots,r_i}} \mapsto \bigstar_{i=1}^m\left(\sum_{j=1}^{r_i} p_{i,j}\right).
\]
Since the dimension of $X^{(\bfr)} / \bbT_X^{m-1}$ is equal to $\sum_i\dim\sigma_{r_i}(X)-(m-1)\dim X$, it follows that whenever $\sigma_\bfr(X)$ has the same dimension, namely in the cases in which it is not Hadamard-defective and it is not overfitting its ambient space, then the general fiber of $\mathcal{H}_\bfr$ is finite. By overfitting, we mean that parameter count $\sum_i\dim\sigma_{r_i}(X)-(m-1)\dim X$ does not strictly exceed the dimension of the ambient space.
Following the standard literature on secant varieties and additive decompositions, if it is a singleton, then we say that $\sigma_\bfr(X)$ is \textit{generically identifiable}. This rises the following natural question.
\begin{question}
    Are the cases of known Hadamard-nondefective and not over-fitting Segre-Veronese varieties shown in \Cref{sec:dimension_Hadamard_powers} generically identifiable? 
\end{question}
In \cite{oneto2023hadamard}, it was proved that this is the case for $m$th Hadamard powers of $r$th secant varieties of Segre varieties as long as $mr$ is below a numerical bound known as the \textit{reshaped Kruskal bound} \cite{chiantini2017effective}. 
{In \cite{pmlr-v238-kant24a}, a certain type of Hadamard product model is considered where each Hadamard factor corresponds to a subset of $\sigma_2 (V_{d,1})$ (fixed mixture weights). They show that for $d$ at least equal to the number of Hadamard factors, this model is generically locally identifiable.}  

\subsection{Equations of Hadamard powers of secant varieties}
\label{sec:geometry-of-Hadamard} 
We have established results on the dimension of several Hadamard powers of secant varieties. 
A natural next question is whether we can determine the polynomial equations that vanish on the model. 
For the Hadamard square of the second secant of the Segre embedding of $\mathbb{P}^1\times \mathbb{P}^1\times \mathbb{P}^1\times \mathbb{P}^1$, \cite{CTY10:Implicitization} observed that it is a hypersurface and obtained descriptions of the polynomial equation that defines it in $\mathbb{P}^{15}$. 
Another case was communicated to us by Yulia Alexandr and Bernd Sturmfels, who computed a degree 18 equation with 2088 monomials that defines $\sigma_2(V_{6,1})^{\star 2}$ as a hypersurface in $\mathbb{P}^6$. This is stored in a Zenodo page \cite{antolini_2025_15837961}. See also \Cref{sec:data}.

\subsection{Lower bounds on the dimension} 

In \Cref{lemma:upperbound_dimensions} we extended \cite[Lemma 25]{MM17:DimensionKronecker} to show that the dimension of a Hadamard product of secant varieties of a toric variety $X$ is always bounded from below by the dimension of the secant variety of $X$ with the same expected dimension. 
Although these two types of varieties are quite different in general \cite{doi:10.1137/140957081}, we showed that they have parametrizations whose Jacobians can be related to one another through a certain parameter scaling procedure. 
To do this, we used that the matrix $\overline{B}_{\mathbf{r}'}$ defining the parametrization of $\sigma_{r_1}(X)\star\cdots\star\sigma_{r_m}(X)$ contains as a sub-matrix the matrix $\overline{I}_R$ defining the parametrization of $\sigma_{R}(X)$. Further characterizing the conditions that allow the application of this parameter scaling procedure in order to lower bound the dimension of one variety by the dimension of another variety would be an interesting future work.

\subsection{Minimal decompositions and universal approximation} 
In \Cref{sec:finiteness_HadamardRanks} we established upper bounds on the generic Hadamard rank; that is, the minimum number of factors so that the Zariski closure of the Hadamard product of the varieties fills the ambient space. 
We also established upper bounds on the Hadamard rank of a point $p$; that is, the minimum number of elements in the Zariski closure of a variety so that their Hadamard product is equal to $p$. 
In this direction, another topic of interest is the minimal number of factors that are needed when one does not take Zariski closures. Concretely, what is the minimum $m$ so that $p$ can be approximated arbitrarily well (in the standard topology of the ambient space of $X$) by a Hadamard product of points in a semi-algebraic set? For instance, we could consider expressions of the form $p_1\star \cdots \star p_m$, where each $p_i$ is a convex combination of $r$ points in $X$. 
This problem is related to the \emph{approximation errors} and \emph{universal approximation} discussed in \cite{MONTUFAR2017531}. Also, can we obtain a semi-algebraic description of a Hadamard product of semi-algebraic sets? This is related to the questions raised in \cite{SM18:MixturesTwoModels} and \cite{sonthalia2023supermodularranksetfunction} about inequalities and the questions raised in \Cref{sec:geometry-of-Hadamard} about the equations that cut the variety of Hadamard powers.

\section*{Data availability}
\label{sec:data}

The computations related to expected defective cases in \Cref{cor:nonhadamarddefect_veronese} and \Cref{cor:generic_binary_tensors} together with some experiments regarding \Cref{conj1} were performed using SageMath \cite{sagemath}. Jupyter Notebooks with the guided computations are provided in the Zenodo page \cite{antolini_2025_15837961}. These include functions to compute parametrizations of Segre-Veronese varieties, their secant varieties and their Hadamard products. This page also contains
the equation of $\sigma_2(V_{6,1})^{\star 2}$ stored both as a SageMath object and as a text file: this was shared to us by Yulia Alexandr and Bernd Sturmfels via private communication.

\bibliographystyle{alphaurl}
\bibliography{Hadamard_ranks.bib}

\end{document}